\newcommand{\red}[1]{{\color{red} #1}}
\newcommand{\blue}[1]{{\color{blue} #1}}
\setlist[itemize]{leftmargin=18pt}
\setlist[enumerate]{leftmargin=18pt}
\theoremstyle{plain}
 \numberwithin{equation}{section}
 \newcommand{\eni}{k1A}
\newcommand{\enii}{k2A}
\newtheorem{theorem}{Theorem}[section]
\newtheorem{proposition}[theorem]{Proposition}
\newtheorem{lemma}[theorem]{Lemma}
\newtheorem{corollary}[theorem]{Corollary}
\newtheorem{conjecture}[theorem]{Conjecture}
\theoremstyle{definition}
\newcommand{\appsection}[1]{\let\oldthesection\thesection
\renewcommand{\thesection}{Appendix \oldthesection}
\section{#1}\let\thesection\oldthesection}
\newtheorem{definition}[theorem]{Definition}
\newtheorem{propdef}[theorem]{Proposition-Definition}
\newtheorem{remark}[theorem]{Remark}
\newtheorem{example}[theorem]{Example}
\def\D{{\mathbb{D}}}
\def\Z{{\mathbb{Z}}}
\def\F{{\mathbb{F}}}
\def\Q{{\mathbb{Q}}}
\def\C{{\mathbb{C}}}
\def\P{{\mathbb{P}}}
\def\O{{\mathcal{O}}}
\def\Wa{{\texttt{W}}}
\def\W{{\mathcal{W}}}
\def\bW{{\overline{W}}}
\def\bbW{{\overline{\mathcal{W}}}}
\providecommand{\leftsquigarrow}{%
  \mathrel{\mathpalette\reflect@squig\relax}%
}
\newcommand{\reflect@squig}[2]{%
  \reflectbox{$\m@th#1\rightsquigarrow$}%
}
\title{The birational geometry of Markov numbers}
\author{Giancarlo Urz\'ua}
\address{Facultad de Matem\'aticas,
Pontificia \allowbreak Universidad \allowbreak{Cat\'olica} de Chile, Santiago, Chile.}
\email{gianurzua@gmail.com}
\author{Juan Pablo Z\'u\~niga}
\address{Facultad de Matem\'aticas,
Pontificia Universidad Cat\'olica de Chile, Santiago, Chile.}
\email{jpzuniga3@uc.cl}
\date{\today}
\begin{document}

\begin{abstract}
It is known that all degenerations of the complex projective plane into a surface with only quotient singularities are controlled by the positive integer solutions $(a,b,c)$ of the Markov equation $$x^2+y^2+z^2=3xyz.$$ It turns out that these degenerations are all connected through finite sequences of other simpler degenerations by means of birational geometry. In this paper, we explicitly describe these birational sequences and show how they are bridged among all Markov solutions. For a given Markov triple $(a,b,c)$, the number of birational modifications depends on the number of branches that it needs to cross in the Markov tree to reach the Fibonacci branch. We show that each of these branches corresponds exactly to a Mori train of the flipping universal family of a particular cyclic quotient singularity defined by $(a,b,c)$. As a byproduct, we obtain new numerical/combinatorial data for each Markov number, and new connections with the Markov conjecture (Frobenius Uniqueness Conjecture), which rely on Hirzebruch-Jung continued fractions of Wahl singularities. 
\end{abstract}

\dedicatory{To the memory of Martin Aigner}


\maketitle

\tableofcontents



\section{Introduction} \label{s0}

The complex projective plane $\P^2$ is rigid. Although it may degenerate into singular surfaces. After the work of B\v{a}descu \cite{B86}, Manetti \cite{Ma91} and Hacking \cite{H04},  Hacking-Prokhorov \cite{HP10} classified all possible degenerations of $\P^2$ into normal projective surfaces with only quotient singularities. They proved that every degeneration is a $\Q$-Gorenstein partial smoothing of $\P(a^2,b^2,c^2)$, where $(a,b,c)$ satisfies the Markov equation $$x^2+y^2+z^2=3xyz.$$ 

We recall that the positive integer solutions of this equation are called \textit{Markov triples}, and the set of all coordinates are the \textit{Markov numbers}. Any permutation of a Markov triple is a solution again, and so we typically order them from smaller to bigger. If $(a,b,c)$ is a solution, then its \textit{mutation} $(a,b,3ab-c)$ is a solution as well. Every Markov triple is obtained from $(1,1,1)$ by permuting and mutating some number of times. Markov triples form an infinite tree of valency $3$ at all vertices except for $(1,1,1)$ and $(1,1,2)$. (We briefly review all basics on Markov triples in Section \ref{s2}.) The old and famous \textit{Markov conjecture} \cite{Aig13} (known also as Frobenius Uniqueness Conjecture \cite{Fro13}) states that in a Markov triple $(a,b<c)$ the integer $c$ determines the integers $a,b$. Markov conjecture has been checked for Markov numbers up to $10^{15000}$ \cite{Per22}. In this paper, we find some new equivalences to this conjecture.

\begin{figure}[htbp]
\centering
\includegraphics[width=12.7cm]{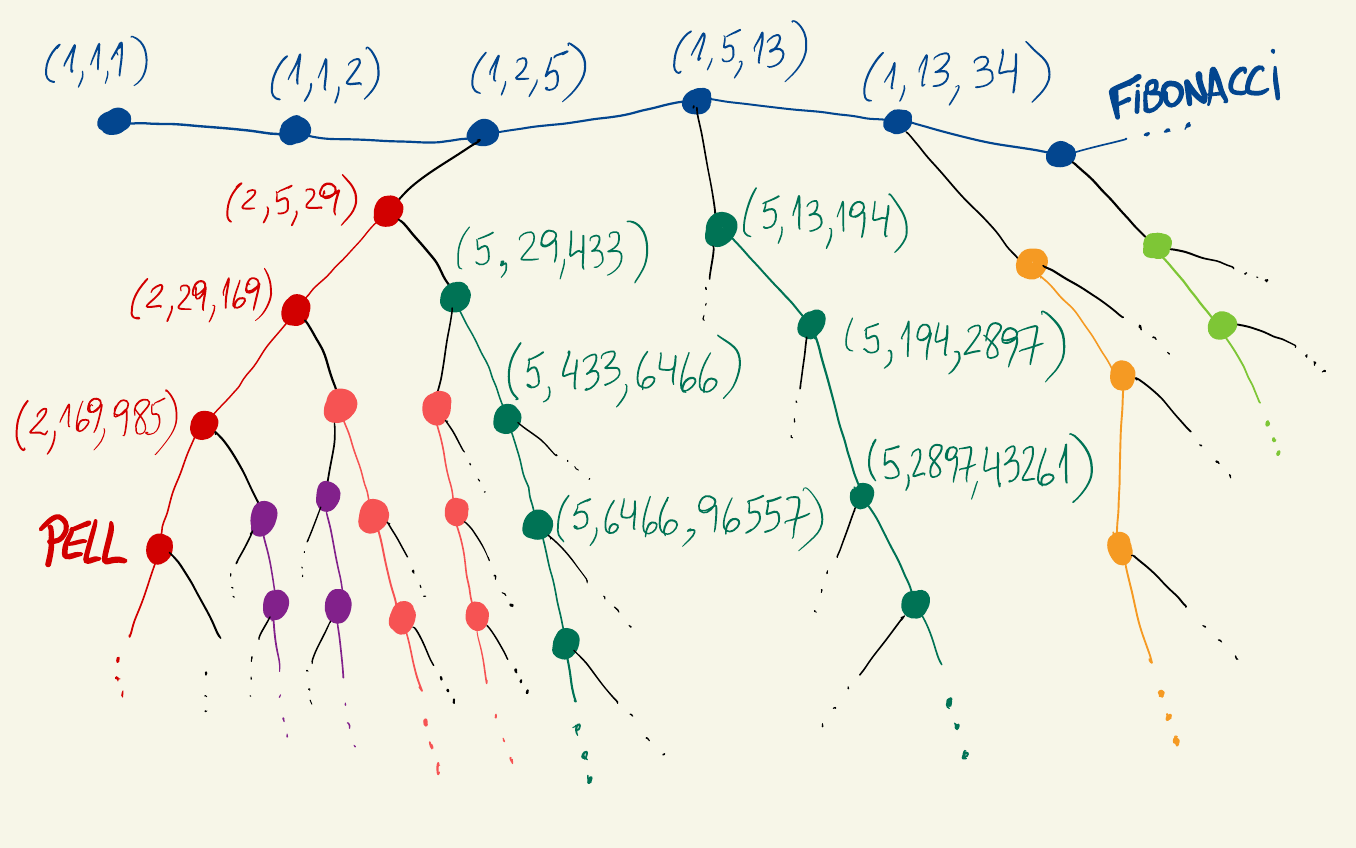}
\caption{Markov tree and some branches} 
\label{f0}
\end{figure}


The Hacking-Prokhorov degenerations of $\P^2$ are part of the bigger picture of $\Q$-Gorenstein deformations of surfaces. They are relevant to understand arbitrary degenerations of surfaces, particularly in the Koll\'ar--Shepherd-Barron--Alexeev compactification of the moduli space of surfaces of general type \cite{KSB88,Ale94}. They have been used in various ways. For example, construction of a compact moduli space for plane curves \cite{H01,H04}, geography of surfaces of general type (e.g. \cite{LP07,PPS09a, PPS09b,RU21}), construction of exotic 4-manifolds and/or diffeomorphism  type of surfaces (e.g. \cite{FS97,P97,Ma01,RU22}), and exceptional collections of vector bundles (e.g. \cite{H12,H13,K21,TU22}).

To be precise, consider a projective surface $W_0$ with only Wahl singularities, and assume we have a $\Q$-Gorenstein smoothing $W_t$ over a disk $\D$ (for definitions see Section \ref{s4}). We summarize this with the symbol $W_t \rightsquigarrow W_0$. It turns out that for this type of deformation, we can run an explicit \textit{Minimal Model Program (MMP)} for the canonical class relative to the base $\D$ \cite{M02,HTU17,Urz16a, Urz16b}, which ends with either surfaces with nef canonical class, or nonsingular deformations of ruled surfaces, or degenerations of $\P^2$ with quotient singularities. 

\begin{definition}
A \textit{Markovian plane} is a degeneration of $\P^2$ with quotient singularities, i.e. a $\P^2 \rightsquigarrow W$ where $W$ is a normal projective surface with only quotient singularities. 
\label{markovian}
\end{definition}

Although Markovian planes are the ends of the MMP, we can still run the MMP on them \cite[Section 3]{Urz16a}. Consider a Markovian plane $\P^2 \rightsquigarrow W$. We cannot run MMP here. Instead, we blow up a general section over $\D$. (We could also take more special blow-ups, even over the singularities of $W$, see \cite[Section 2]{Urz16a}.) Then we have $$\F_1 \rightsquigarrow \text{Bl}_{\text{pt}}(W)=:W_0,$$ where $\F_m$ is the Hirzebruch surface of index $m$. Now we have two options for running MMP on $\F_1 \rightsquigarrow W_0$: we can take either a divisorial contraction, returning to the Markovian plane, or a flip \cite[Section 3]{Urz16a}. And so we do the flip. After that, we obtain a $\F_1 \rightsquigarrow W_1$, and if $W_1$ is singular, then we have a flip again and so on, until we reach a $\F_1 \rightsquigarrow W_{\mu}$ with $W_{\mu}$ nonsingular, i.e. a Hirzebruch surface. If $W=\P(a^2,b^2,c^2)$ has $r$ singularities, then $W_{\mu}=\F_{2r+1}$.

Since a general Markovian plane is governed by the toric surface $\mathbb{P}(a^2, b^2, c^2)$, we restrict our focus to the case $W=\mathbb{P}(a^2,b^2,c^2)$. In this setting, the flips from $W_0$ to $W_{\mu}$ give particular numerical/combinatorial data to the Markov triple $(a,b,c)$. In this process, we find connections between Markov triples via particular properties of cyclic quotient singularities. For example, the Markov conjecture is about singularities that admit extremal P-resolutions of a special kind. There is a well-established machinery to study them (see e.g. \cite[Section 4]{HTU17}). In general, when we perform a flip from $W_t\rightsquigarrow W^-$ to $W_t\rightsquigarrow W^+$, an extremal P-resolution is precisely a 2-dimensional neighborhood of the flipped curve in $W^+$.

\begin{definition}
An \textit{extremal P-resolution} $f^+ \colon W^+ \to \bW$ of a cyclic quotient singularity germ $(P \in \bW)$ is a partial resolution with only Wahl singularities such that ${f^+}^{-1}(P)$ is a nonsingular rational curve $\Gamma^+$, and $\Gamma^+ \cdot K_{W^+}>0$. Thus, $W^+$ has at most two singularities (\cite[Lemma 3.14]{KSB88}). 
\label{extremalPres}
\end{definition}

At this point, one could ask: \textit{How does the number of flips $\mu$ depend on $(a,b,c)$? Is it possible to express the chain of flips explicitly?} The main purpose of this paper is to completely describe this MMP for any Markov triple, and reinterpreting Markov conjecture in some new ways as a byproduct.
\vspace{0.3cm}

Our first theorem shows how Markov triples are connected (through deformations and MMP) on a \textit{branch} of the Markov tree. Given a Markov triple $(a<b<c)$, we define its two branches as the set of $(c<m_k<m_{k+1})$ with $k\geq 0$ in one of the two chains
{\scriptsize $$ (a<b<c)-(a<c<3ac-b)-(c<m_0<m_1)-(c<m_1<m_2)-\ldots-(c<m_{k}<m_{k+1})-\ldots$$} where $m_0=3ac-b$, $m_1=3(3ac-b)c-a$, and $m_{k+1}=3m_{k-1}c-m_k$ for $k \geq 1$.

{\scriptsize $$ (a<b<c)-(b<c<3bc-a)-(c<m_0<m_1)-(c<m_1<m_2)-\ldots-(c<m_{k}<m_{k+1})-\ldots$$} where $m_0=3bc-a$, $m_1=3(3bc-a)c-b$, and $m_{k+1}=3m_{k-1}c-m_k$ for $k \geq 1$. 

For the triples $(1,1,1)$ and $(1,1,2)$ we have only one branch (see Section \ref{s2}), we name them as the \textit{Fibonacci branch} and the \textit{Pell branch} respectively. In Figure \ref{f0} we have different colors for branches of different Markov numbers $a$ of Markov triples $(a<b<c)$. For example, the triple $(1<2<5)$ defines the two green branches of $5$ in Figure \ref{f0}. 

\vspace{0.1cm}

On the other hand, any cyclic quotient singularity that admits an extremal P-resolution defines a universal antiflipping family \cite{HTU17} and the corresponding Mori trains (see Section \ref{s4} for more details). 

\begin{definition}
Let $\frac{1}{\Delta}(1,\Omega)$ be a cyclic quotient singularity. A \textit{Mori train} is the combinatorial data to construct all divisorial contractions or flips over $\frac{1}{\Delta}(1,\Omega)$. This will be precisely described in Section \ref{s4}, but it involves the full combinatorial data of all the k1A and k2A extremal neighborhoods, which are deformation equivalent, over $\frac{1}{\Delta}(1,\Omega)$. The train wagons are all the Hirzebruch-Jung continued fractions of the Wahl singularities in these k1A and k2A extremal neighborhoods.      
\label{MoriTrainI}
\end{definition}  


\begin{definition}
Let $n\geq 0$. We say that a collection of $\Q$-Gorenstein smoothings $\{\F_n \rightsquigarrow W_i\}$ \textit{stabilizes at} the $k$th flip if for every $i$ we can apply $k$ consecutive flips on $\F_n \rightsquigarrow W_i$ to obtain a $\F_n \rightsquigarrow W$ for some fixed $W$. On the other hand, we say that a given $\F_n \rightsquigarrow W_0$ \textit{stabilizes into} $\F_n \rightsquigarrow W_1$ at the $k$th flip if after $k$ flips on $\F_n \rightsquigarrow W_0$ and $i$ flips on $\F_n \rightsquigarrow W_1$ for some $0 \leq i \leq k$, we obtain $\F_n \rightsquigarrow W$ with a common $W$.
\label{stabilization}
\end{definition}

\begin{theorem}
Let $(a,b<c)$ be a Markov triple, and let $i>0$. Then the MMP on $$\F_1 \rightsquigarrow W_0=\text{Bl}_{\text{pt}}\big(\P(c^2,m_i^2,m_{i+1}^2) \big)$$ corresponding to each of the two branches defined by $(a,b<c)$ stabilizes at the $3$rd flip (see Definition \ref{stabilization}). Moreover, for each fixed branch the antiflips at the 3rd flip are k2A neighborhoods of one Mori train over the cyclic quotient singularity $\frac{1}{\Delta}(1,\Omega)$, where $$\Delta=c^2(c^2D-(c-1)^2), \ \ \ \ \ \ \Omega=c^2+(c \zeta +1)(c^2D-(c-1)^2),$$ $\zeta=w_c,c-w_c$ (one value for each branch), and $D=9c^2-4$. (The integer $w_c$ is the T-weight of $c$ as in Definition \ref{r,w}.) We have that the $\delta$ invariant for both Mori trains is equal to $3c$. 
\label{branches-trains}
\end{theorem}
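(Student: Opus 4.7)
The plan is to run the MMP explicitly for the first three flips starting from
$\F_1 \rightsquigarrow W_0 = \text{Bl}_{\text{pt}}(\P(c^2, m_i^2, m_{i+1}^2))$, and to track the three Wahl singularities of $W_0$, namely
$\tfrac{1}{c^2}(1, cw_c-1)$, $\tfrac{1}{m_i^2}(1, m_iw_{m_i}-1)$ and
$\tfrac{1}{m_{i+1}^2}(1, m_{i+1}w_{m_{i+1}}-1)$, via their Hirzebruch--Jung continued fractions. The $(-1)$-curve of $\F_1$, which gives the first $K$-negative extremal ray to be flipped, meets these singularities in a prescribed pattern, so each flip amounts to detaching one T-chain from the dual graph and grafting it into the Wahl chain of a surviving singularity.

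The key observation is that along a branch the Markov recursion $m_{k+1} = 3c\,m_k - m_{k-1}$ forces the T-weights $w_{m_i}, w_{m_{i+1}}$ to satisfy a parallel linear recursion modulo $c$. Consequently, the Wahl chains of the two $i$-dependent singularities decompose as \emph{augmentations} of a small core that depends only on $c$ and on the residual parameter $\zeta \in \{w_c, c-w_c\}$, which records which neighbor of $c$ the branch mutates through. Each of the first three flips strips off exactly one augmentation layer, so after the third flip all $i$-dependence has been absorbed and the resulting $\F_1 \rightsquigarrow W$ depends only on $c$ and $\zeta$; this is the stabilization statement, and the two branches at $(a,b<c)$ correspond to the two values of $\zeta$. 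Concatenating the continued fractions and using the T-weight formula for $c^2$ together with the mutation identities, one reads off the remaining cyclic quotient singularity of $W$ as $\tfrac{1}{\Delta}(1,\Omega)$ with the stated $\Delta$ and $\Omega$; the discriminant $D = 9c^2 - 4$ enters naturally because it is the discriminant of the Markov equation in the variable being mutated, and it controls the self-intersection of the exceptional curve of the universal extremal P-resolution.

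Finally, as $i$ varies in the branch, the family $W_0(i)$ produces, by construction, an infinite sequence of $\Q$-Gorenstein antiflips of the same $W$ centered at this cyclic quotient point. By the universal property of antiflipping families proved in \cite{HTU17}, this sequence is the Mori train of $\tfrac{1}{\Delta}(1,\Omega)$, and the Wahl chains appearing in the intermediate surfaces $W_2(i)$ are precisely its wagons. The three-term Markov recursion on $m_i$ then translates, via the standard slope calculation for Mori trains, into $\delta = 3c$. The main obstacle is the stabilization claim itself: showing that \emph{exactly} three flips erase the $i$-dependence requires a uniform control of all three augmentation layers across an infinite branch, and is the technical heart of the argument. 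Once stabilization is established, the explicit values of $\Delta$, $\Omega$ and $\delta = 3c$ follow from routine, though intricate, Hirzebruch--Jung continued-fraction arithmetic.
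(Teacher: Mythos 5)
Your plan follows the same route as the paper's actual proof: run the MMP explicitly on $\F_1 \rightsquigarrow W_0$, track the chain of Wahl singularities through the first three flips via their HJ continued fractions, and use the linear recursions $m_{k+1}=3cm_k-m_{k-1}$ and $w_{m_{k+1}}\mapsto 3cw_{m_k}-w_{m_{k-1}}$ to get uniformity along the branch. But as written it is an outline, not a proof: the two assertions carrying all the content --- that exactly three flips erase the dependence on $i$, and that the third flip is governed by $\frac{1}{\Delta}(1,\Omega)$ with the stated $\Delta$ and $\Omega$ --- are asserted rather than established, and you acknowledge this yourself (``the technical heart of the argument''). What is missing is the computation itself: for a general vertex $(c<m_k<m_{k+1})$ with $k\geq 1$ one must locate the flipping curve at each step, check via the Mori recursion that each extremal neighborhood is of flipping rather than divisorial type, and write down the resulting extremal P-resolution. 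The paper does exactly this in Sections \ref{s61}--\ref{s64}, and the outcome does not quite match your picture of ``each flip strips one augmentation layer'': Flip 1 has $\delta=m_{k-1}$ and extracts a wagon $\left[{c^2 \choose c\zeta+1}\right]$ from the $m_{k+1}$-singularity, Flip 2 has $\delta=m_{k-2}$ and does the same to the $m_k$-singularity, and only Flip 3 has $\delta=3c$; stabilization occurs because the extremal neighborhoods appearing at that third step for varying $k$ are precisely the wagons of one Mori train over $\frac{1}{\Delta}(1,\Omega)$, all antiflipping to the same extremal P-resolution $\left[{m_1-bc^2 \choose \ast}\right]-1-\left[{ac^2-m_0 \choose \ast}\right]$.

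One step you could legitimately shortcut is $\delta=3c$: once the Wahl indices at the third flip are known to be affine in the $m_k$ (they are $m_{k+1}-c^2m_{k-1}$ and $m_k-c^2m_{k-2}$), the Mori recursion $n(k-1)+n(k+1)=\delta\,n(k)$ matches the Markov recursion $m_{k-1}+m_{k+1}=3c\,m_k$ and forces $\delta=3c$. But even this presupposes the output of Flips 1 and 2, and the closed formulas $\Delta=c^2(c^2D-(c-1)^2)$, $\Omega=c^2+(c\zeta+1)(c^2D-(c-1)^2)$ cannot be ``read off'' without the explicit P-resolution data. Your proposal also does not address the separate treatments required for the Fibonacci and Pell branches, the branches of $(1<b<c)$, and the exceptional small cases, all of which the paper handles individually. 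So the strategy is the right one, but the proof has not actually been carried out.
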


Hence, one can think of a Markov branch as a Mori train. It turns out that the particular singularity $\frac{1}{\Delta}(1,\Omega)$ can be reduced to the singularity $\frac{1}{\Delta_0}(1,\Omega_0)$, where $$\Delta_0=(4c+w_c)(5c-w_c)-9   \ \ \ \text{and} \ \ \ \Omega_0=c(4c+w_c)-1,$$ in the sense that one singularity admits an extremal P-resolution if and only if the other does, and with the same $\delta=3c$. We analyze the connection of this singularity with the Markov conjecture in Section \ref{s3}. Another thing to highlight at this point is that the Mori trains over these singularities depend on the infinite continued fraction $$\frac{3c+ \sqrt{9c^2-4}}{2}=3c  - \frac{1}{3c - \frac{1}{\ddots}},$$ and so the appearance of the discriminant $D=9c^2-4$. This is the discriminant of the Markov quadratic form associated with $(a,b<c)$ \cite[Definition 2.5]{Aig13}.    


\begin{definition}
We say that we \textit{change branches} when we move from a branch to a different adjacent branch in the Markov tree. Precisely it means that we move from the initial Markov triple of a branch $(c<m_0<m_1)$ to the adjacent vertex $(3cm_0-m_1<c<m_0)$ of another branch. 
\label{changebranches}
\end{definition}

It turns out that a change in branches is reflected in a stabilization of the MMP of the corresponding degenerations.

\begin{theorem}
Let $(c<m_0<m_1)$ be the initial Markov triple of a branch with $1<3cm_0-m_1$. Then $\F_1  \rightsquigarrow \text{Bl}_{\text{pt}}\P(c_0^2,m_0^2,m_1^2) $ stabilizes into $$\F_1 \rightsquigarrow \text{Bl}_{\text{pt}} \ \P((3cm_0-m_1)^2,c^2,m_0^2)$$ at the $k$th flip for some $k\leq 12$.
\label{changes-branches}
\end{theorem}

With these two theorems, we ensemble all the corresponding Mori trains of ``minimal" Markov numbers $a$ (i.e. depending on triples $(a<b<c)$) in a decreasing order, until we arrive at the Fibonacci branch, which has its own Mori train over the singularity $\frac{1}{5}(1,1)$. In particular the first antiflip over $\frac{1}{7}(1,1)$ (i.e. the general case with $3$ singularities which arrives to $\F_7$) has always the same central singular surface with one Wahl singularity $\frac{1}{36}(1,5)$. We summarize it all in the next theorem.   


\begin{theorem}
Let $(1<a<b<c)$ be a Markov triple, and consider the shortest connected path from $(a<b<c)$ to the Fibonacci branch. Let $\nu$ be the number of branches it needs to cross in the Markov tree to become $(a_{\nu}=1<b_\nu<c_\nu)$. 

Then, the MMP on $\F_1 \rightsquigarrow W_0$ for the Markovian plane corresponding to $\P(a^2,b^2,c^2)$ needs at most $6\nu+3$ flips to reach the smooth deformation $\F_1 \rightsquigarrow \F_7$. (For the particular case $(a=1<b<c)$ we need $3$ flips to reach $\F_1 \rightsquigarrow \F_5$, and for $(1,1,2)$ we need only $1$ flip to reach $\F_1 \rightsquigarrow \F_3$.) The upper bound $6\nu+3$ is optimal. 
\label{complete-MMP}
\end{theorem} 

Theorem \ref{complete-MMP} implies that the amount of flips tends to infinite if and only if the amount of changes in branches does. At each step the flips are unique, and so this gives a unique numerical data associated to each Markov triple. 


As an example, we reproduce here the numerical data associated to 
$(5,29,433)$. The surface $\mathbb{P}(5^2,29^2,433^2)$ has $3$ Wahl singularities with associated Hirzebruch-Jung continued fractions:

\begin{itemize}
    \item $\frac{1}{433^2}(1,433\cdot104-1), \frac{433^2}{433\cdot 104-1}=[5, 2, 2, 2, 2, 2, 10, 5, 2, 2, 2, 2, 2, 2, 2, 8, 2, 2, 2]$
    \item $\frac{1}{29^2}(1,29\cdot 7 -1), \frac{29^2}{29\cdot 7-1}=[5, 2, 2, 2, 2, 2, 10, 2, 2, 2]$
    \item $\frac{1}{5^2}(5\cdot 1 -1), \frac{5^2}{5\cdot 1-1}=[7, 2, 2, 2]$.
\end{itemize}
In this way, the following chain of numbers represents the self-intersections in its minimal resolution and connecting $(-1)$-curves.

{\scriptsize $$[7, 2, 2, 2]-(1)-[5, 2, 2, 2, 2, 2, 10, 5, 2, 2, 2, 2, 2, 2, 2, 8, 2, 2, 2]-(1)-[5, 2, 2, 2, 2, 2, 10, 2, 2, 2]$$}

After that, we consider $\F_1  \rightsquigarrow \text{Bl}_{\text{pt}}\P(5^2,29^2,433^2)$, and we run MMP as described in Section \ref{s4}.  The MMP stops after $9$ flips with a smooth deformation $\F_1  \rightsquigarrow \F_7$. At each step, the flipping curves are $(-1)$-curves in the corresponding minimal resolutions, and so they are showed as $(1)_{-}$, and the flipped curves are showed as $(c)_{+}$. Then the numerical data from this MMP for $(5,29,433)$ is: 

\vspace{0.1cm}
Flip 1:

{\scriptsize [7, 2, 2, 2]-(1)-[5, 2, 2, 2, 2, 2, 9, 5, 2, 2, 2, 2, 2, 2, 8, 2, 2, 2]-$(1)_{+}$-[5, 2, 2, 2, 2, 9, 2, 2, 2]-$(1)_{-}$-[5, 2, 2, 2, 2, 2, 10, 2, 2, 2]}

Flip 2:  

{\scriptsize [7, 2, 2, 2]-(1)-[5, 2, 2, 2, 2, 2, 9, 5, 2, 2, 2, 2, 2, 2, 8, 2, 2, 2]-$(1)_{-}$-[6, 2, 2]-$(1)_{+}$-[5, 2, 2, 2, 2, 9, 2, 2, 2]}

Flip 3:  

{\scriptsize [7, 2, 2, 2]-$(1)_{-}$-[6, 2, 2]-$(1)_{+}$-[5, 2, 2, 2, 2, 9, 5, 2, 2, 2, 2, 2, 2, 7, 2, 2, 2]-(1)-[5, 2, 2, 2, 2, 9, 2, 2, 2]}

Flip 4:  

{\scriptsize [6, 2, 2]-$(2)_{+}$-$(1)_{-}$-[5, 2, 2, 2, 2, 9, 5, 2, 2, 2, 2, 2, 2, 7, 2, 2, 2]-(1)-[5, 2, 2, 2, 2, 9, 2, 2, 2]}

Flip 5:

{\scriptsize  [6, 2, 2]-(1)-[4, 2, 2, 2, 2, 9, 5, 2, 2, 2, 2, 2, 2, 7, 2, 2]-$(2)_{+}$-$(1)_{-}$-[5, 2, 2, 2, 2, 9, 2, 2, 2]}

Flip 6:   

{\scriptsize [6, 2, 2]-(1)-[4, 2, 2, 2, 2, 9, 5, 2, 2, 2, 2, 2, 2, 7, 2, 2]-$(1)_{-}$-[4, 2, 2, 2, 2, 9, 2, 2]-$(2)_{+}$} 

Flip 7:

{\scriptsize [6, 2, 2]-$(1)_{-}$-$(4)_{+}$-[2, 2, 2, 2, 8]-(0)}

Flip 8: 

{\scriptsize $(5)_{+}$-$(1)_{-}$-[2, 2, 2, 2, 8]-(0)}

Flip 9: 

{\scriptsize (0)-$(7)_{+}$-(0)}
\bigskip 

The complete numerical computation of the MMP for a general Markov triple is detailed in Section \ref{s6}. For more examples, we refer to the computer program \cite{programa}.

We highlight that this is a small part of the bigger picture of antiflips of smooth deformations of rational surfaces. In particular, in a subsequent work, we will describe the situation for deformations of Hirzebruch surfaces, and how Markovian planes sit in the general picture for $\F_1$. 

\vspace{0.3cm} 

Let us finish this introduction with various combinatorial characterizations of Markov numbers among all integers, and equivalences to the Markov conjecture. The statements use the notation of Hirzebruch-Jung continued fractions, Wahl chains (Definition \ref{WahlChain}), and weights of a Markov triple, which are reviewed in Sections \ref{s1} and \ref{s2}. Reinterpretations of the bijection between the Farey tree (reduced version of the Stern-Brocot tree) and the Markov tree via the Wahl tree, and Cohn words via Wahl-2 chains (Definition \ref{Wahl2Chain}) are included. These results are discussed in Sections \ref{s2} and \ref{s3}. The initial sections do not require any knowledge of birational geometry.

\begin{theorem} [Proposition \ref{markovrc}]
Let $\frac{a}{r_a}, \frac{b}{r_b}, \frac{c}{r_c}$ be the fractions of Wahl-2 chains. We have $\frac{c}{r_c}=[\frac{a}{r_a},4,\frac{b}{r_b}]$ if and only if $(a,b<c)$ is a Markov triple.
\label{markovrcintro}
\end{theorem}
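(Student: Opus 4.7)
The plan is to encode Hirzebruch--Jung continued fractions via the matrix $\mat{x}$ defined in the preamble, and then to induct on the Markov tree. Given $\frac{n}{r}=[e_1,\dots,e_k]$, set $M(\tfrac{n}{r})=\prod_{i=1}^k \mat{e_i}\in SL_2(\Z)$, so that the first column of $M(\tfrac{n}{r})$ is $(n,r)^{t}$ and concatenation of continued fractions corresponds to matrix multiplication. The claimed identity is then equivalent to the $2\times 2$ matrix identity
$$M\!\left(\tfrac{c}{r_c}\right)\;=\;M\!\left(\tfrac{a}{r_a}\right)\,\mat{4}\,M\!\left(\tfrac{b}{r_b}\right),$$
provided no carrying occurs at the joints (a fact that should follow from the shape of Wahl-2 chains, which always begin and end with a $2$ when nontrivial).

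Next I would translate the Wahl-2 recursion into matrix language. The two moves generating Wahl-2 chains out of $[2]$ correspond to multiplication of $M(\tfrac{n}{r})$ on the right by two fixed matrices $L,R\in SL_2(\Z)$, equivalently to a birational rule on the pair $(n,r)$. Similarly, the two Markov mutations $(a,b,c)\mapsto(a,c,3ac-b)$ and $(a,b,c)\mapsto(c,b,3bc-a)$ must be matched with these moves under the bijection between the Markov tree and the Wahl tree referred to in the introduction. The forward induction then runs on the Markov tree: after verifying the base cases $(1,1,2)$ and $(1,2,5)$ by direct computation (treating $\frac{1}{r_1}$ as the identity matrix/empty chain), one assumes the identity for $(a,b,c)$ and multiplies both sides by the matrices encoding a chosen mutation. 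A routine $2\times 2$ expansion then shows the result is the analogous identity for the mutated triple, and the algebraic content of that step is precisely $a^2+b^2+c^2=3abc$, which is what makes the $(1,1)$-entry of the new product land on the correct mutated Markov number.

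For the reverse direction I would induct on the total length of the three Wahl-2 chains appearing in a putative identity. If $\frac{c}{r_c}$ is longer than trivially minimal, then one of its endpoints inherits a specific form (a $2$ on the left or right), which, combined with the identity, forces a compatible reduction of either $\frac{a}{r_a}$ or $\frac{b}{r_b}$. That reduction is exactly the inverse of one of the $L,R$ moves on the Wahl-2 side, and after tracking how the $(1,1)$-entry changes one sees it is the inverse of a Markov mutation. Thus the tree of triples satisfying the identity coincides with the Markov tree.

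The main obstacle is the precise matching between Wahl-2 moves and Markov mutations, together with the observation that the central entry $4$ is the correct glue. Expanding the $(1,1)$-entry of $M(\tfrac{a}{r_a})\,\mat{4}\,M(\tfrac{b}{r_b})$ in terms of $a,r_a,b,r_b$ and their complementary column entries (themselves determined by $a,r_a,b,r_b$ via $\det=1$), and showing that this equals $c$ exactly when the Markov equation holds, requires identifying $r_a,r_b$ in terms of $a,b$ through the T-weights $w_a,w_b$ of Definition~\ref{r,w}. The clean appearance of $4$, as opposed to a more exotic integer, is essentially equivalent to the Markov equation having coefficient $3$ on the right-hand side.
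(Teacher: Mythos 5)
Your starting point---encoding HJ continued fractions by products of the matrices $\mat{e_i}$, so that for a Wahl-2 chain the product collapses to $\left(\begin{smallmatrix} m & r-m \\ r & f(r,m)-r\end{smallmatrix}\right)$---is exactly the paper's first step. Where you diverge is in replacing the paper's direct computation by a double induction, and the half proving ``identity $\Rightarrow$ Markov'' has a genuine gap. You propose to shorten $\tfrac{c}{r_c}$ by undoing one Wahl-2 generating move and claim this ``forces a compatible reduction of either $\tfrac{a}{r_a}$ or $\tfrac{b}{r_b}$''. It does not: each generating move touches \emph{both} ends of a chain (append a $2$ on one side \emph{and} increment the entry on the opposite side), so undoing one on the concatenation $[\tfrac{a}{r_a},4,\tfrac{b}{r_b}]$ decrements the first entry of the $\tfrac{a}{r_a}$-block while deleting the last entry of the $\tfrac{b}{r_b}$-block. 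Neither of these is the inverse of a generating move applied to that block alone (decrementing the first entry of $\tfrac{29}{17}=[2,4,2,3]$ gives $[1,4,2,3]$, which is not even a reduced HJ chain), so the shortened chain is not again of the form $[\tfrac{a'}{r_{a'}},4,\tfrac{b'}{r_{b'}}]$ and the induction does not close. The descent that actually mirrors an inverse Markov mutation strips a whole prefix $[\tfrac{a}{r_a},4]$ (or the mirror suffix), and knowing that what remains is again an identity of the required shape presupposes the Markov structure you are trying to establish. The paper avoids all of this: it multiplies the three matrices out, uses $f_x=(r_x^2+1)/x$ to eliminate the second columns, and extracts $a^2+b^2+c^2=3abc$ from the three resulting scalar equations, with no induction at all.

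The other direction, ``Markov $\Rightarrow$ identity'' by induction up the tree, is closer to viable---the paper itself records the two mutation formulas $\tfrac{c'}{r_{c'}}=[\tfrac{a}{r_a},4,\tfrac{c}{r_c}]$ and $\tfrac{c'}{r_{c'}}=[\tfrac{b}{b-r_b},4,\tfrac{c}{c-r_c}]$ immediately after the proposition, and the inductive step reduces to the weight identities $r_ca-r_ac=b$, $cr_b-br_c=a$ of Proposition~\ref{prop} (which is also all the paper's two-line converse uses). But your base cases are wrong as stated: with the empty-chain convention, $(1,2,5)$ gives $[4,\tfrac{2}{1}]=[4,2]=\tfrac{7}{2}$, not $\tfrac{5}{r_5}=\tfrac{5}{2}=[3,2]$; for $a=1$ the correct statement is $\tfrac{c}{r_c}=[3,\tfrac{b}{r_b}]$, because the ``increment the first entry'' of the generating move is absorbed into the middle $4$. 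Since Definition~\ref{r,w} requires $0<r_a<a$, the proposition really concerns triples with $a,b\geq 2$, whose minimal members form the infinite family $(b,c,3bc-1)$ with $(1,b,c)$ on the Fibonacci branch; each would need separate verification via the $a=1$ variant. None of this is fatal for this direction, but by the time the auxiliary $a=1$ identities and the weight bookkeeping are in place you have reproved the ingredients that make the paper's direct matrix computation a few lines long.
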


Let $0<q<m$ be coprime integers. Consider the Hirzebruch-Jung continued fractions $$\frac{m}{q}=[x_1,\ldots,x_r] \ \ \text{and} \ \ \frac{m}{m-q}=[y_1,\ldots,y_s].$$ One can check that $\frac{m^2}{mq-1}=[x_1,\ldots,x_{r}+y_{s},\ldots,y_1]$.

\begin{theorem} [Proposition \ref{combi1}]
We have $\frac{m^2}{mq-1}=[\Wa_0^{\vee},10,\Wa_1^{\vee}]$ for some non-empty Wahl chains $\Wa_i$ (where $\Wa_i^{\vee}$ are dual Wahl chains) if and only if $m$ is a Markov number. In fact, if $\frac{n_0^2}{n_0a_0-1}=\Wa_0$ and $\frac{n_1^2}{n_1 a_1-1}=\Wa_1$, then $n_0^2+n_1^2+m^2=3 n_0n_1m$. 
\label{combi1intro}
\end{theorem}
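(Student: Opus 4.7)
Setup: I would encode every Hirzebruch--Jung continued fraction $[b_1,\ldots,b_k]$ by its $\mathrm{SL}_2(\mathbb{Z})$-matrix $M=\mat{b_1}\cdots\mat{b_k}$, whose top-left entry is the numerator and bottom-left the denominator of the fraction. A Wahl chain $W$ with data $(n,a)$ (so $[W]=n^2/(na-1)$) has $M(W)_{11}=n^2$ and $M(W)_{21}=na-1$. The ``dual'' $W^\vee$ in this paper is the zero-continued-fraction conjugate (its HJ-fraction is $n^2/(n^2-na+1)$, not a chain reversal), and a short calculation pins down $M(W^\vee)$ explicitly in terms of $(n,a)$.

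Core identity: The main technical work is to expand the product $M(W_0^\vee)\,\mat{10}\,M(W_1^\vee)$ and extract its $(1,1)$-entry as a polynomial in $(n_0,a_0,n_1,a_1)$. One then shows that this entry equals $m^2$ precisely when $(n_0,n_1,m)$ satisfies the Markov equation $n_0^2+n_1^2+m^2=3n_0n_1m$. The matching condition simplifies to a single linear constraint in $(a_0,a_1)$ of the form $n_0 a_1-n_1 a_0=-3m'$, where $m':=3n_0n_1-m$ is the ``partner'' of $m$ in the Markov triple. The specific central entry $10$ is essential: the coefficient $9=3^2$ in front of $n_0^2 n_1^2$ in the $(1,1)$-entry is what must match the squared Markov coefficient, and this forces precisely the value $10$.

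With the identity in hand, the implication \emph{decomposition $\Rightarrow$ $m$ Markov} is immediate: extract $(n_0,n_1)$ from the Wahl data of $W_0,W_1$, set $m:=\sqrt{M_{11}}$, and the identity yields the Markov relation. For the converse \emph{$m$ Markov $\Rightarrow$ decomposition}, choose a Markov triple $(n_0,n_1,m)$ with $n_0,n_1\geq 2$; since Markov triples are pairwise coprime, B\'ezout guarantees integer solutions to $n_0 a_1-n_1 a_0=-3m'$, and one verifies (using the Markov tree structure and the explicit form of $m'$) that a solution with $0<a_i<n_i$ and $\gcd(a_i,n_i)=1$ exists. The Wahl chains $W_i$ with data $(n_i,a_i)$ then yield the desired decomposition, with $q$ read off from $M_{21}$.

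Main obstacle: The central difficulty is the matrix computation combined with fixing the correct ``dual'' convention (zero-continued-fraction conjugate). A secondary subtlety in the converse direction is the admissibility of $(a_0,a_1)$: for every Markov $m$, one must exhibit a Markov triple $(n_0,n_1,m)$ with $n_0,n_1\geq 2$ and a valid in-range solution, which may require choosing a triple in which $m$ is not the maximum entry (e.g., on the Fibonacci branch, where $m$ appears as the maximum only of triples with one entry equal to $1$). The argument runs parallel to the simpler Wahl-2 identity (Theorem \ref{markovrcintro}) with central entry $4$; the passage from $4$ to $10$ encodes the ``squaring'' from the Wahl-2 setting to the full Wahl setting for $\frac{m^2}{mq-1}$.
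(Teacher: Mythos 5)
Your forward direction is essentially the paper's argument: encode the chains as $\mathrm{SL}_2(\mathbb{Z})$ products and compare entries. One caveat: the $(1,1)$-entry alone only gives $m^2+n_0^2+n_1^2=n_0n_1t$ with $t=9n_0n_1-n_1a_0+n_0a_1$, which is the Markov equation only once you know $t=3m$. That extra relation is not visible in the $(1,1)$-entry; the paper extracts it by also using the $(2,1)$- and $(1,2)$-entries (its equation (4.2)) to get $9m^2=t^2$. So you do need the full matrix identity, not just the numerator; with that adjustment your linear constraint $n_0a_1-n_1a_0=-3m'$ is exactly the paper's $3m=9n_0n_1+n_1w_0-n_0w_1$ rewritten via $w_i=n_i-a_i$.

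The converse is where your route genuinely diverges from the paper and where there is a real gap. The paper does \emph{not} argue arithmetically: it invokes Hacking--Prokhorov to get a $\Q$-Gorenstein smoothing of $\P(n_0^2,n_1^2,m^2)$ and then Manetti's structure theorems to read off the decomposition $\frac{m^2}{mw_m-1}=[\Wa_0^{\vee},10,\Wa_1^{\vee}]$ directly, for the triple $(n_0<n_1<m)$ with $m$ the \emph{maximum}. Your proposed escape hatch --- ``choosing a triple in which $m$ is not the maximum entry'' for Fibonacci-branch numbers --- cannot work. From $3m=9n_0n_1+n_1w_0-n_0w_1$ and $0<w_i<n_i$ one gets $\tfrac{8}{3}n_0n_1<m<\tfrac{10}{3}n_0n_1$, so any decomposition with central entry $10$ forces $m$ to be the largest root of $X^2-3n_0n_1X+n_0^2+n_1^2$, i.e.\ the maximum of the triple. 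Consequently, for $m=2,5,13,34,\ldots$ (which are maximal only in triples containing a $1$) no decomposition with two non-empty wagons exists at all --- e.g.\ for $m=5$ the fractions $\tfrac{25}{5q-1}$ are $[7,2,2,2],[3,5,2],[2,5,3],[2,2,2,7]$, none containing a $10$. The literal ``if and only if'' with both $\Wa_i$ non-empty therefore fails in one direction, and the correct statement is the three-case version (Proposition 4.1 in the paper) with $\alpha\in\{4,7,10\}$ according to how many $n_i$ equal $1$. Even in the genuinely two-singularity case, your B\'ezout step is incomplete: after producing in-range $(a_0,a_1)$ solving the linear constraint you would still have to check that the $(2,1)$-entry of the product is congruent to $-1$ modulo $m$ (so that the fraction really has the form $\tfrac{m^2}{mq-1}$); this does not follow from the $(1,1)$-entry being $m^2$ and is precisely what Manetti's theorem hands the authors for free.
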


\begin{theorem} [Proposition \ref{combi3}]
We have $$[5,x_1,\ldots,x_r,2,y_s,\ldots,y_1,5]=[\Wa_0,2,\Wa_1]$$ if and only if there is a Markov triple $(1<a,b<c=:m)$.
\label{combi3intro}
\end{theorem}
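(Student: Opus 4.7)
My plan is to reduce the theorem to a criterion on the cyclic quotient singularity $\tfrac{1}{\Delta_0}(1,\Omega_0)$ introduced after Theorem~\ref{branches-trains}, by explicitly computing the left-hand side as a rational number and identifying it with $\Delta_0/\Omega_0$.

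For the computation I use the matrix formalism $\mu_a=\mat{a}$. Setting
\[
\mu_{x_1}\cdots\mu_{x_r}=\begin{pmatrix} m & -m_1 \\ q & -q_1 \end{pmatrix},\qquad \mu_{y_1}\cdots\mu_{y_s}=\begin{pmatrix} m & -m_2 \\ m-q & -q_2 \end{pmatrix},
\]
the determinant identities $m_1 q-m q_1=1$ and $m_2(m-q)-m q_2=1$ force $m_1\equiv q^{-1}$ and $m_2\equiv -q^{-1}\pmod m$, so $m_1+m_2=m$, and a short calculation also yields $q_2=m-q-m_1+q_1$. Reversing the second matrix flips its off-diagonal entries; multiplying out $\mu_5\cdot\mu_{x_1}\cdots\mu_{x_r}\cdot\mu_2\cdot(\mu_{y_1}\cdots\mu_{y_s})^{\mathrm{rev}}\cdot\mu_5$ and extracting the $(1,1)/(2,1)$ ratio, all cross-terms in $m_1,q_1$ cancel via the determinant identity, leaving
\[
[5,x_1,\ldots,x_r,2,y_s,\ldots,y_1,5]=\frac{(4m+q)(5m-q)-9}{m(4m+q)-1},
\]
which is precisely $\Delta_0/\Omega_0$ in the notation of the introduction. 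As a sanity check, for the triple $(2,5,29)$ with $q=22$ the right-hand side becomes $16965/4001$, and the continued fraction $[5,2,2,2,8,2,2,2,2,2,2,2,5,5]$ indeed splits as $[5,2,2,2,8,2,2,2]$--$2$--$[2,2,2,5,5]$, both factors Wahl chains with invariants $(21,5)$ and $(9,7)$.

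With this identification the theorem reduces to the criterion: the Hirzebruch--Jung continued fraction of $\tfrac{1}{\Delta_0}(1,\Omega_0)$ splits as $[\Wa_0,2,\Wa_1]$ (two Wahl chains joined by a central $(-2)$-entry, i.e.\ the combinatorial shape of an extremal P-resolution) if and only if $m$ belongs to a Markov triple $(1<a,b<c=m)$. For the ``if'' direction, the Markov triple together with the choice of branch prescribes explicit Wahl chains $\Wa_0,\Wa_1$, and the Markov equation $a^2+b^2+c^2=3abc$ is precisely what makes their concatenation $[\Wa_0,2,\Wa_1]$ evaluate to $\Delta_0/\Omega_0$. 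Conversely, a decomposition $[\Wa_0,2,\Wa_1]$ yields Wahl invariants $(n_i,a_i)$ from which one extracts a numerical identity that unpacks to a Markov equation; the restriction $a>1$ corresponds to both $\Wa_0,\Wa_1$ being non-trivial (if one degenerated, $\Delta_0/\Omega_0$ would collapse to a form characteristic of a Fibonacci/Pell branch, which is excluded).

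The main obstacle is the matrix simplification in the explicit computation: expanding a product of six $2\times 2$ matrices and making both determinant identities together with the relation $q_2=m-q-m_1+q_1$ work in concert requires careful bookkeeping before the clean $\Delta_0/\Omega_0$ form emerges. Once that is established, the biconditional with Markov triples is expected to follow from a combinatorial analysis of when $\Delta_0/\Omega_0$ admits a Wahl-chain split at an internal $2$, in the spirit of the argument for Proposition~\ref{combi1}.
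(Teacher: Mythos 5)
Your opening computation is correct and is worth keeping: expanding the matrix product $\mu_5\cdot(\mu_{x_1}\cdots\mu_{x_r})\cdot\mu_2\cdot(\mu_{y_s}\cdots\mu_{y_1})\cdot\mu_5$ indeed yields
\[
[5,x_1,\ldots,x_r,2,y_s,\ldots,y_1,5]=\frac{20m^2+mq-q^2-9}{4m^2+mq-1}=\frac{(4m+q)(5m-q)-9}{m(4m+q)-1},
\]
in agreement with the formulas for $\Delta_0$ and $\Omega_0$ recorded after Proposition \ref{combi3} (with $q$ playing the role of $w_c$), and your numerical check for $(2,5,29)$ is consistent. This is a genuinely different entry point from the paper, which never evaluates the fraction directly: the paper instead passes to the dual continued fraction $[2,2,2,y_1+1,y_2,\ldots,y_s+x_r,\ldots,x_2,x_1+1,2,2,2]$ and works entirely with zero continued fractions.

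The problem is that identifying the value as $\Delta_0/\Omega_0$ is only the setup: the theorem is precisely the assertion about when this fraction splits as $[\Wa_0,2,\Wa_1]$, so your ``reduction'' restates the claim rather than reducing it, and both implications are then left as one-sentence expectations. For ``Markov $\Rightarrow$ split'' you would need to produce the Wahl data explicitly (the paper records $n_0=5b-w_b$, $a_0=b$, $n_1=4a+w_a$, $a_1=3a+w_a$, $\delta=3c$), verify via the extremal P-resolution formulas $\Delta={n'_0}^2+{n'_1}^2+\delta n'_0 n'_1$ and the corresponding expression for $\Omega$ that one recovers exactly $\Delta_0/\Omega_0$, and then invoke uniqueness of HJ expansions with entries $\geq 2$; none of this is carried out. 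The converse, ``split $\Rightarrow$ Markov,'' is the crux and is dispatched with ``one extracts a numerical identity that unpacks to a Markov equation'': it is not at all clear that a direct analogue of the completing-the-square manipulation in Proposition \ref{combi1} closes the system $\Delta_0=n_0^2+n_1^2+\delta n_0n_1$, $\delta=n_0n_1+n_1a_0-n_0a_1$ together with the $\Omega_0$ relation, and you give no argument that it does; your parenthetical treatment of the degenerate case (one $\Wa_i$ trivial) is likewise unsupported. The paper sidesteps all of this by dualizing, using the criterion of \cite[\S 4.1]{HTU17} to translate the splitting into the existence of two indices $i<j$ where subtracting $1$ gives a zero continued fraction, quoting Proposition \ref{combi2} (which rests on Proposition \ref{combi1}) to manufacture the Markov triple, and pinning down the middle $(-2)$ via the identity $\sum_i x_i+\sum_j y_j=3r+3s-2$ and \cite[Theorem 2.5]{UV22}. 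To make your route work you must either supply the missing reverse-direction identity in full or reconnect to the zero-continued-fraction criterion as the paper does.
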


The last theorem comes from the singularity $\frac{1}{\Delta_0}(1,\Omega_0)$ explained above, and precisely shows an extremal P-resolution with Wahl singularities corresponding to $\Wa_0$ and $\Wa_1$, the $(-2)$-curve $\Gamma^+$ (in the minimal resolution), and $\delta=3c$. In general, by \cite[Section 4]{HTU17}, there are at most two such extremal P-resolutions for a fixed pair $m,q$. But Markov conjecture is known when we fix $m$ and $q$, and so there is only one extremal P-resolution \cite[Proposition 3.15]{Aig13}.
\vspace{0.1cm} 

Let us recall some equivalences to the Markov conjecture, including the connections made in this paper. Our choice is oriented towards algebraic geometry, for more equivalences see \cite{Aig13}. 

\begin{itemize}
    \item[(I)] Let $c>2$ be a Markov number. Then the equation $x^2 \equiv -1 ($mod $c)$ has only two solutions as weights $r_c$ of some Markov triple $(a,b,c)$. The other is $c-r_c$. 
    

    \item[(II)] Consider a degeneration of $\P^2$ into a projective surface with one quotient singularity of some given order. Then the singularity is unique. In that case, the order is the square of a Markov number $c$, and the singularity is $\frac{1}{c^2}(1,c w_c-1)$. 
    
    \item[(III)]  Up to dualizing and tensoring by line bundles, an exceptional vector bundle in $\P^2$ is uniquely determined by its rank (conjectured by A. N. Tyurin \cite{Rud88}). This rank is always a Markov number, and the statement is about the uniqueness of the slope of the vector bundle in $[0,1/2]$.

    \item[(IV)] Given an integer $m$, there are at most two $0<q<m$ coprime  such that $$\frac{m}{q}=\left[\frac{m_0}{q_0},4,\frac{m_1}{q_1}\right]$$ where $\frac{m}{q},$ $\frac{m_0}{q_0}$ and $\frac{m_1}{q_1}$ are the fractions of Wahl-2 chains (see Theorem \ref{markovrcintro}).

    \item[(V)]  Given an integer $m$, there are at most two $0<q<m$ coprime such that $$\frac{m^2}{mq-1}=[\Wa_0^{\vee},10,\Wa_1^{\vee}]$$ for some Wahl chains $\Wa_i$ (see Theorem \ref{combi1intro}).

    \item[(VI)]  Given an integer $m$, then there are at most two $0<q<m$ coprime such that $$\left[5,\frac{m}{q},2,\frac{m}{m-q'},5\right] =[\Wa_0,2,\Wa_1]$$ for some Wahl chains $\Wa_0$ and $\Wa_1$, where $0<q'<m$ and $q q' \equiv 1$ (mod $m$) (see Theorem \ref{combi3intro}).


\end{itemize}

\subsubsection*{Acknowledgments} We thank Jonny Evans, Markus Perling, and Nicol\'as Vilches for useful discussions, and to the anonymous referee for a very helpful report. The paper was partially written while the first author was at the Freiburg Institute for Advanced Studies under a Marie S. Curie FCFP fellowship. He thanks the institute, Stefan Kebekus, and Adrian Langer for their hospitality. The first author was supported by the FONDECYT regular grant 1230065. The second author was supported by ANID-Subdirecci\'on de Capital Humano/Doctorado Nacional/ 2022-21221224.

\section{Hirzebruch-Jung continued fractions and Wahl chains} \label{s1}

\begin{definition}
Let $\{e_1,\ldots,e_r \}$ be a sequence of positive integers. We say that it admits a \textit{Hirzebruch-Jung continued fraction (HJ continued fraction)} if $$[e_i,\ldots, e_r]:=  e_i - \frac{1}{e_{i+1} - \frac{1}{\ddots - \frac{1}{e_r}}}$$ is positive for all $i\geq 2$. Its value is $[e_1,\ldots, e_r]$.  
\end{definition}

If $e_i \geq 2$ for a given $\{e_1,\ldots,e_r \}$, then the sequence admits a HJ continued fraction and $[e_1,\ldots,e_r] >1$. In fact, there is a one-to-one correspondence between $[e_1,\ldots, e_r]$ with $e_i\geq 2$ and rational numbers greater than $1$. Hence for any coprime integers $0<q<m$ we associate a unique HJ continued fraction $$\frac{m}{q}=[e_1,\ldots,e_r]$$ with $e_i\geq 2$ for all $i$. The presence of $1$s in an admissible sequence $\{e_1,\ldots,e_r \}$ produces non-uniqueness of the HJ continued fractions for the same value $[e_1,\ldots,e_r]$, and if this value is a rational number smaller than or equal to $1$, then we are forced to have $1$s for some $ e_i$s. This non-uniqueness is derived from the ``arithmetic blowing-up" identity $$u- \frac{1}{v} = u+1 - \frac{1}{1-\frac{1}{v+1}}.$$

For example, the HJ continued fractions associated with the value $0$, which will be called \textit{zero continued fractions}, are:

$[1,1]$,

$[1,2,1]$, $[2,1,2]$, 

$[1,2,2,1]$, $[2,1,3,1]$, $[1,3,1,2]$, $[3,1,2,2]$, $[2,2,1,3]$,

etc.

\begin{remark}
There is a well-known one-to-one correspondence between the previous list of zero continued fractions and triangulations of polygons \cite{C91,S91,HTU17}. A \textit{triangulation of a convex polygon} $P_0P_1 \dots P_s$ is given by drawing some non intersecting diagonals on it which divide the polygon into triangles. For a fixed triangulation, one defines $v_i$ as the number of triangles that have $P_i$ as one of its vertices. Note that $$v_0+v_1+\ldots+v_s = 3(s-1).$$
Via an easy induction on $s$, one can show that $[k_1, \dots, k_s]$ is a zero continued fraction if and only if there exists a triangulation of $P_0P_1\dots P_s$ such that $v_i=k_i$ for every $1 \leq i \leq s$. In this way, the number of zero continued fractions of length $s$ is the \textit{Catalan number} $$\frac{1}{s}\binom{2(s-1)}{s-1}.$$
\label{catalan}
\end{remark}

\begin{propdef}
Let $0<q<m$ be coprime integers, and let $\frac{m}{q}=[x_1,\ldots,x_r]$ be its HJ continued fraction (with $x_i\geq 2$). Then we have the following:
\begin{itemize}
    \item[(1)] $\frac{m}{q'}=[x_r,\ldots,x_1]$ where $0<q'<m$ satisfies $qq' \equiv 1($mod $m)$.
    \item[(2)] The \textit{dual HJ continued fraction} $\frac{m}{m-q}:=[y_1,\ldots,y_s]$ satisfies $$[x_1,\ldots,x_r,1,y_s,\ldots,y_1]=0.$$
    \item[(3)] $\frac{m^2}{mq-1}=[x_1,\ldots,x_r+y_s,\ldots,y_1]$, and $\frac{m^2}{m(m-q)+1}=[y_1,\ldots,y_s,2,x_r,\ldots,x_1].$    
\end{itemize}
\label{basico}
\end{propdef}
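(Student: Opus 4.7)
The plan is to convert all three statements into identities between products of $2\times 2$ matrices. To each positive integer $x$ associate $M(x)=\mat{x}$ (note $\det M(x)=1$), and set $N(x_1,\ldots,x_r):=M(x_1)M(x_2)\cdots M(x_r)$. A short induction on $r$ shows
$$N(x_1,\ldots,x_r)=\begin{pmatrix} m & -p \\ q & -p' \end{pmatrix},$$
where $\frac{m}{q}=[x_1,\ldots,x_r]$, $\frac{p}{p'}=[x_1,\ldots,x_{r-1}]$, and the determinant relation $pq-mp'=1$ holds. In particular $pq\equiv 1\pmod m$, and the value of the continued fraction is just the ratio of the two entries of the first column. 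This is the dictionary I will use throughout.

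For (1), I exploit the reflection identity $M(x)^T=JM(x)J$ with $J=\mathrm{diag}(1,-1)$, which immediately gives
$$N(x_r,\ldots,x_1)=J\,N(x_1,\ldots,x_r)^T\,J=\begin{pmatrix} m & -q \\ p & -p' \end{pmatrix}.$$
Reading the first column yields $[x_r,\ldots,x_1]=m/p$, and $pq\equiv 1\pmod m$ is exactly the claim that $q':=p$ satisfies $qq'\equiv 1\pmod m$. Applying the same recipe to $\frac{m}{m-q}=[y_1,\ldots,y_s]$ and solving $(m-q)\tilde p\equiv 1\pmod m$ forces $\tilde p=m-p$, producing the clean formula
$$N(y_s,\ldots,y_1)=\begin{pmatrix} m & -(m-q) \\ m-p & -\tilde p' \end{pmatrix},$$
which is the workhorse for (2) and (3).

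For (2), the plan is to multiply $N(x_1,\ldots,x_r)\cdot M(1)\cdot N(y_s,\ldots,y_1)$ directly. The top-left entry collapses to $(m-p)m-m(m-p)=0$, while the bottom-left entry reduces to $pq-mp'=1$ via the determinant relation. The value of the continued fraction is therefore $0/1=0$.

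For (3), the key trick is the algebraic identity
$$M(x+y)=-M(x)\,M(0)\,M(y),$$
which one verifies directly from $M(0)=\begin{pmatrix} 0 & -1 \\ 1 & 0 \end{pmatrix}$. Inserting this into the product for $[x_1,\ldots,x_{r-1},x_r+y_s,y_{s-1},\ldots,y_1]$ converts it into $-N(x_1,\ldots,x_r)\cdot M(0)\cdot N(y_s,\ldots,y_1)$, and a short computation (again using $pq-mp'=1$) reads off top-left $m^2$ and bottom-left $mq-1$, proving the first formula. The second formula follows from the parallel computation of $N(y_1,\ldots,y_s)\cdot M(2)\cdot N(x_r,\ldots,x_1)$, whose first column evaluates to $(m^2,\,m(m-q)+1)$. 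The main non-routine step—and the one I expect to be the subtle part—is discovering the decomposition $M(x+y)=-M(x)M(0)M(y)$; once it is in hand, everything reduces to careful bookkeeping with the determinant relation.
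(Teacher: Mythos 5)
Your proof is correct in substance, and the matrix identities all check out: the dictionary sending $[x_1,\ldots,x_r]$ to $M(x_1)\cdots M(x_r)$ with first column $(m,q)^{T}$, second column $(-p,-p')^{T}$ and $pq-mp'=1$; the reflection identity $M(x)^{T}=JM(x)J$, which gives $N(x_r,\ldots,x_1)=\left(\begin{smallmatrix} m & -q\\ p & -p'\end{smallmatrix}\right)$ and hence (1); the first column $(0,1)^{T}$ of $N(x_1,\ldots,x_r)M(1)N(y_s,\ldots,y_1)$ for (2); and the factorization $M(x+y)=-M(x)M(0)M(y)$, which reduces both halves of (3) to products whose first columns are $(m^2,mq-1)^{T}$ and $(m^2,m(m-q)+1)^{T}$ respectively. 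Your route is necessarily different from the paper's, because the paper gives no argument here: Proposition \ref{basico} is dispatched with a citation to \cite{HP10}. On the other hand, it is exactly the matrix calculus the paper itself invokes later (the identity quoted from \cite{UV22} in the proof of Proposition \ref{markovrc}), so what your write-up buys is a self-contained verification in the spirit of the paper's own toolkit, at the cost of a page of bookkeeping.

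One point to tighten in (2). The continuant computation shows that the fraction attached to the string $x_1,\ldots,x_r,1,y_s,\ldots,y_1$ is $0/1$, but by the paper's definition the symbol $[x_1,\ldots,x_r,1,y_s,\ldots,y_1]$ has a value only if every tail $[e_i,\ldots]$ with $i\geq 2$ is positive, and the inference ``value equals ratio of the first-column entries'' presupposes that the nested expression never divides by zero. In (1) and (3) this is automatic since all entries are $\geq 2$, but in (2) the interior $1$ requires a check, which your proposal does not address. It is routine to supply: the tails inside the $y$-part exceed $1$; the tail $[1,y_s,\ldots,y_1]$ equals $p/m\in(0,1)$ by your own dictionary; and the tails beginning at $x_i$ with $i\geq 2$ come out to be the numerator of $[x_1,\ldots,x_{i-2}]$ divided by the numerator of $[x_1,\ldots,x_{i-1}]$, hence lie in $(0,1)$ (this follows from the same matrix formalism, or one can instead note that the whole string is produced from $[1,1]$ by iterating the arithmetic blow-up identity recalled in Section \ref{s1}, which preserves admissibility). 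Add a sentence to this effect and the argument is complete.
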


\begin{proof}
These are well-known facts on HJ continued fractions. See \cite[Remark 2.8]{HP10} for (1), \cite[Section 2.1]{UV22} for (2), and \cite[Lemma 8.5]{HP10} for (3).
\end{proof}

\begin{definition}
Let $0<a<n$ be coprime integers. A \textit{Wahl chain} is the collection of numbers corresponding to the HJ continued fraction of $\frac{n^2}{na-1}$. 
\label{WahlChain}
\end{definition}

Every Wahl chain can be obtained via the following algorithm due to J. Wahl (see \cite[Prop.3.11]{KSB88}): 

\begin{itemize}
    \item[(i)] $[4]$ is the Wahl chain for $n=2$ and $a=1$.
    \item[(ii)] If $[e_1,\ldots,e_r]$ is a Wahl chain, then $[e_1+1,e_2,\ldots,e_r,2]$ and $[2,e_1,\ldots,e_{r-1},e_r+1]$ are Wahl chains.
    \item[(iii)] Every Wahl chain is obtained by starting with (i) and iterating the steps in (ii).
\end{itemize}

As we saw in Proposition \ref{basico} part (iii), the Wahl chain of $\frac{n^2}{na-1}$ can be constructed from the HJ continued fraction of $\frac{n}{a}$ and its dual.

\begin{proposition}
Let $[b_1,\ldots,b_s]$ be a HJ continued fraction with $b_i\geq 2$ for all $i$. Assume that there is $i$ such that $[b_1,\ldots, b_i-1, \ldots,b_s]=0$. Then this $i$ is unique and $[b_1,\ldots,b_s]$ is the HJ continued fraction of the dual of a Wahl chain.
\label{onepos}
\end{proposition}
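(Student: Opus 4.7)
The plan is to first pin down $b_i=2$, then read off the dual-Wahl-chain structure directly from Proposition \ref{basico}, and finally deduce uniqueness of $i$ from the uniqueness of reduced HJ continued fractions with entries $\geq 2$.

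I would begin with the elementary induction showing that any HJ continued fraction with all entries $\geq 2$ evaluates to a rational strictly larger than $1$. Consequently, if $b_i\geq 3$ the decremented sequence still has all entries $\geq 2$ and cannot equal $0$, so $b_i=2$ and the decremented entry is $1$. The boundary cases are excluded by the same inequality: if $i=1$ then $0=[1,b_2,\ldots,b_s]=1-1/[b_2,\ldots,b_s]$ forces $[b_2,\ldots,b_s]=1$, impossible; the case $i=s$ is symmetric (or follows by reversal via Proposition \ref{basico}(1)). Hence $2\leq i\leq s-1$ and the blocks $A=(b_1,\ldots,b_{i-1})$ and $B=(b_{i+1},\ldots,b_s)$ are non-empty with entries $\geq 2$.

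Next, writing $n/q:=[A]$ in lowest terms, Proposition \ref{basico}(2) produces the dual $n/(n-q)=[y_1,\ldots,y_u]$ together with the identity $[A,1,y_u,\ldots,y_1]=0$. Comparing with $[A,1,B]=0$, both equations unroll (by evaluating the outer $i$ entries $A,1$) to the same M\"obius equation in the single tail unknown, forcing $[B]=[y_u,\ldots,y_1]$ as rationals; since both are HJ continued fractions with entries $\geq 2$, uniqueness gives $B=(y_u,\ldots,y_1)$. The second formula of Proposition \ref{basico}(3) will then yield
\[
[b_1,\ldots,b_s]=[A,2,B]=\frac{n^2}{nq+1},
\]
which is precisely the dual HJ continued fraction of the Wahl chain $n^2/(n(n-q)-1)$.

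For uniqueness, I would assume positions $i<j$ both satisfy the hypothesis. Running the previous step twice gives $n^2/(nq+1)=(n')^2/(n'q'+1)$ where $n/q=[b_1,\ldots,b_{i-1}]$ and $n'/q'=[b_1,\ldots,b_{j-1}]$ are in lowest terms. Since $\gcd(n^2,nq+1)=\gcd(n,1)=1$, the common value is already reduced; the numerator determines $n$ as its positive square root, and the denominator then determines $q$. So $n=n'$, $q=q'$, giving $[b_1,\ldots,b_{i-1}]=[b_1,\ldots,b_{j-1}]$ as rationals, and uniqueness of HJ continued fractions with entries $\geq 2$ forces $i-1=j-1$, contradicting $i<j$. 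The main subtlety I anticipate is the converse direction of Proposition \ref{basico}(2) used above---identifying $B$ with the reversed dual of $A$---which I plan to handle via the uniqueness of the rational solution to the associated M\"obius equation in the tail.
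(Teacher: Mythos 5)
Your proof is correct and follows essentially the same route as the paper's: force $b_i=2$, recognize $[b_1,\ldots,b_{i-1}]-1-[b_{i+1},\ldots,b_s]$ as a dual pair via Proposition \ref{basico}, conclude that $[b_1,\ldots,b_s]$ is a dual Wahl chain, and get uniqueness of $i$ because the value $n^2/(nq+1)$ in lowest terms determines $(n,q)$ and hence the prefix $[b_1,\ldots,b_{i-1}]$. You simply make explicit several steps the paper leaves tacit (why $b_i=2$, the boundary cases $i=1,s$, and the M\"obius-equation identification of the tail with the reversed dual), which is fine.
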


\begin{proof}
We note that $b_i=2$. Therefore $[b_1,\ldots,b_s]=[x_1,\ldots,x_{i-1},2,y_{s-i},\ldots,y_1]$ and so it is the dual of a Wahl chain by Proposition \ref{basico}. 
If there is another index $j$ such that $[b_1,\ldots, b_j-1, \ldots,b_s]=0$, then $[b_1,\ldots,b_s]=[x'_1,\ldots,x'_{j-1},2,y'_{s-j},\ldots,y'_1]$, where there exist some coprime integers $0<a'<n'$ such that $\frac{n'}{a'}=[x'_1,\ldots,x'_{j-1}]$ and $\frac{n'}{n-a'}=[y'_1,\ldots,y'_{s-j}]$.
By Proposition, \ref{basico} we obtain $\frac{n'^2}{n'a'-1}= \frac{n^2}{na-1}$. Since gcd$(n,a)=1$ and gcd$(n',a')=1$, it follows that $n=n'$ and $a=a'$. Therefore, the index $i$ is unique.
\end{proof}

\begin{remark}
Let $[b_1,\ldots,b_s]$ be a HJ continued fraction with $b_i\geq 2$ for all $i$. Let us assume we have a pair of indices $i<j$ such that $[b_1,\ldots, b_i-1, \ldots, b_j-1, \ldots,b_s]=0$. These HJ continued fractions are precisely the ones associated with extremal P-resolutions. They were studied in \cite[Section 4]{HTU17}. As we will see, they are important to study the birational geometry involved in this article. For now, we can say that these particular HJ continued fractions may admit at most two pairs $i_k < j_k$ $k=1,2$ of indices so that $[b_1,\ldots, b_{i_k}-1, \ldots, b_{j_k}-1, \ldots,b_s]=0$ \cite[Theorem 4.3]{HTU17}, and when that happens we have the wormhole cyclic quotient singularities studied in \cite{UV22}. A classification of these wormhole singularities is not known.   
\end{remark}

In Proposition \ref{basico} we have HJ continued fraction of $\frac{n^2}{n^2-na+1}$ (dual to $\frac{n^2}{na-1}$). They are also generated by the steps (ii), and (iii) above, but starting with $[2,2,2]$. We call them \textit{dual Wahl chains}. In what follows we will use the notation $\Wa$ to represent a Wahl chain (or its sequence of integers), and $\Wa^{\vee}$ for dual Wahl chains. For example, the HJ continued fraction $[2,5,3,7,2,2,3,2,2,4]$ has the form $[\Wa_1,7,\Wa_2^{\vee}]$ where $\Wa_1=[2,5,3]$ and $\Wa_2^{\vee}=[2,2,3,2,2,4]$. In this case, we may also say that $\Wa_2=[4,5,2,2]$ as it is dual to $[2,2,3,2,2,4]$.   

\section{Markov numbers and HJ continued fractions} \label{s2}

As in the introduction, \textit{Markov triples} are the positive integer solutions $(a,b,c)$ of the Markov equation $$ x^2+y^2+z^2=3xyz.$$ The integers that appear in Markov triples are called \textit{Markov numbers}. Any permutation of a solution $(a,b,c)$ is a solution. A \textit{mutation} of a Markov triple $(a,b,c)$ is $(a,b,3ab-c)$, which is again a solution. In fact, any Markov triple can be obtained from $(1,1,1)$ by applying finitely many permutations and mutations. Solutions can be seen as vertices of an infinite connected tree, where edges represent mutations. The triples $(1,1,1)$ and $(1,1,2)$ are the only solutions with repeated Markov numbers, and so we will typically order Markov triples as $(a<b<c)$.


\begin{remark}
In the study of Markov triples, it is common to construct a function between the vertices of the Farey tree, which are in bijection with the rationals in $[0,1]$, and the Markov tree (see the Farey table in \cite[3.2]{Aig13}). This tree has vertices $(\frac{a}{b},\frac{a+a'}{b+b'},\frac{a'}{b'})$, and it is constructed by levels via the operation $\frac{x}{y} \oplus \frac{w}{z}= \frac{x+w}{y+z}$ on consecutive entries $\frac{x}{y},\frac{w}{z}$. The function sends the middle entry $t$ in the vertex to the corresponding Markov number $m_t$ in the same position. On the other hand, one can define the \textit{Wahl tree} using the Wahl chain algorithm in the previous section, starting with the vertex $[4]$. At each level, we have all the Wahl chains of a given length. A Wahl chain $[e_1,\ldots,e_r]=\frac{m^2}{mq-1}$ depends on coprime integers $0<q<m$, and $\frac{m^2}{m(m-q)-1}=[e_r,\ldots,e_1]$. When we increase its length by one through the algorithm, we obtain two Wahl chains, one for $0<q+m-q=m<m+m-q=2m-q$ and the other for $0<q<m+q$. If we think of each Wahl vertex as a pair $(\frac{q}{m},\frac{m-q}{m})$, then we obtain an obvious correspondence with the Farey tree via sending  $(\frac{q}{m},\frac{m-q}{m})$ to the vertices $\frac{q}{m}$ and $\frac{m-q}{m}$. Hence each Wahl chain has two associated Markov numbers, which are ``opposite" in the Markov tree.
\label{trees}
\end{remark}

\begin{definition}
Given a Markov triple $(a<b<c)$ and $x\in\{a,b,c\}$, we define integers $0<r_x,w_x<x$ as follows:
\begin{itemize}
    \item $r_a\equiv b^{-1}c \ (\text{mod} \ a)$, $r_b\equiv c^{-1}a \ (\text{mod} \ b)$, and $r_c\equiv a^{-1}b \ (\text{mod} \ c)$.
    \item $w_a\equiv 3b^{-1}c \ (\text{mod} \ a)$, $w_b\equiv 3c^{-1}a \ (\text{mod} \ b)$, and $w_c\equiv 3a^{-1}b \ (\text{mod} \ c)$.
\end{itemize}
\label{r,w}
\end{definition}

Markov conjecture essentially says that the $r_x$ depends only on the Markov number $x$ and not on a Markov triple that contains $x$. In \cite[Section 3.3]{Aig13}, the numbers $r_x$ are called characteristic numbers. In \cite{Per22} they are called weights, and the $w_x$ are called T-weights. Let us summarize basic properties of Markov numbers and their weights (see e.g. \cite{Aig13}, \cite{Rud88}, \cite[Cor.5.4]{Per22}).  

\begin{proposition}
Let $x>1$ be part of a Markov triple $(a<b<c)$, then $x+w_x=3r_x$, $r_x^2 \equiv -1 (\text{mod} \ x)$, $r_ca-r_a c=b$, $cr_b-br_c=a$, and $ar_b-br_a=3ab-c$. 
\label{prop}
\end{proposition}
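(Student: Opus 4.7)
The Markov equation $a^2+b^2+c^2=3abc$ is the sole engine here: every assertion follows by reducing it modulo one of $a, b, c$ and chasing the defining congruences of $r_x, w_x$, together with range analysis. First, for $r_x^2 \equiv -1 \pmod x$, reducing the Markov equation modulo $c$ gives $a^2+b^2 \equiv 0 \pmod c$, so $(a^{-1}b)^2 \equiv -1 \pmod c$, i.e.\ $r_c^2 \equiv -1 \pmod c$; the arguments modulo $a$ and $b$ are symmetric. As a by-product one obtains pairwise coprimality of $a, b, c$, which I will use freely below to invert classes.

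For the linear identities I would treat the three in order. For $r_c a - r_a c = b$, the defining congruence $r_c a \equiv b \pmod c$ makes $k := (r_c a - b)/c$ an integer; the bounds $0 < r_c < c$ and $0 \le b < c$ confine it to $[0, a)$. Multiplying $kc = r_c a - b$ by $b$ and reducing modulo $a$, the Markov congruence $b^2+c^2 \equiv 0 \pmod a$ gives $kbc \equiv -b^2 \equiv c^2 \pmod a$, so $kb \equiv c \pmod a$, i.e.\ $k \equiv r_a \pmod a$, hence $k = r_a$. The identity $c r_b - b r_c = a$ is the mirror argument modulo $b$. For $a r_b - b r_a = 3ab - c$, reducing each side modulo $a$ and modulo $b$ separately using the Markov equation gives $-c$ in both cases, and CRT delivers $a r_b - b r_a \equiv 3ab - c \pmod{ab}$. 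The bound $|a r_b - b r_a| < ab$ together with $3ab - c \in (0, ab)$ (from $c > 2ab$ for non-seed triples, via the quadratic $c^2 - 3abc + (a^2+b^2) = 0$), and the sign fixed by the ordering derived just below, pins the value down.

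The ordering $r_a/a < r_c/c < r_b/b$ is immediate from the signs $b > 0$ and $a > 0$ in the first two linear identities after dividing by $ac$ and $bc$ respectively. The sharper bound $r_c/c < 1/2$ for $c > 2$ is the subtlest ingredient, and I expect the main obstacle to lie here. I would proceed by induction on the position of $(a,b,c)$ in the Markov tree, starting from $(1,2,5)$ where the claim is direct, and tracking how the distinguished root $r_c$ of $t^2 \equiv -1 \pmod c$ transforms under a mutation $c \leftrightarrow 3ab - c$ through the linear identities above. The difficulty is that $t^2 \equiv -1 \pmod c$ admits the two solutions $r_c$ and $c - r_c$, and a mutation could in principle swap between them; the quantitative gap $r_b/b - r_a/a = 3 - c/(ab) \in (0,1)$ extracted from the identities, combined with the recursive tree structure, should control which root is selected.

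Finally, $x + w_x = 3 r_x$ is equivalent to the range condition $r_x \in (x/3, 2x/3)$: since $w_x \equiv 3 r_x \pmod x$ with $0 < w_x < x$, the unique representative $3 r_x - kx$ lying in $(0, x)$ forces $k = 1$ exactly when $r_x$ sits in that interval. For $x = c$ the upper bound $r_c < 2c/3$ follows from $r_c < c/2$, and the matching lower bound $r_c > c/3$ comes from $r_c/c = r_a/a + b/(ac)$ after checking the right-hand side exceeds $1/3$ using $c/(ab) \in (2,3)$. For $x = a$ and $x = b$ the corresponding ranges transfer via the identities $r_c/c - r_a/a = b/(ac)$ and $r_b/b - r_c/c = a/(bc)$, combined with the ordering to confine $r_a/a$ and $r_b/b$ to $(1/3, 2/3)$.
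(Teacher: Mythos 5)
The paper gives no proof of this proposition at all---it is quoted as standard with pointers to \cite{Aig13}, \cite{Rud88} and \cite{Per22}---so supplying one from scratch is a genuinely different (and more self-contained) route. Most of what you do is correct and complete: reducing the Markov equation modulo each coordinate gives $r_x^2\equiv -1\pmod x$; the integrality-plus-range argument pins down $r_ca-r_ac=b$ and $cr_b-br_c=a$; dividing these by $ac$ and $bc$ yields the ordering $\frac{r_a}{a}<\frac{r_c}{c}<\frac{r_b}{b}$; and the CRT argument for $ar_b-br_a=3ab-c$, with $0<3ab-c<ab$ coming from $c>2ab$ and the sign from the ordering, is sound.

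The gap is in the last two items, and for the final inequality the deeper problem is that the statement itself fails with the paper's normalization $r_c\equiv a^{-1}b\pmod c$, $0<r_c<c$. Take $(a,b,c)=(2,5,29)$: here $2^{-1}\equiv 15\pmod{29}$, so $r_{29}\equiv 75\equiv 17$, and $\frac{17}{29}>\frac12$ (this is consistent with the paper's own data $w_{29}=22=3\cdot 17-29$). Indeed your own correctly proved inequality $\frac{r_a}{a}<\frac{r_c}{c}$ already forces $\frac{r_c}{c}>\frac12$ whenever $a=2$, since then $r_a=1$; the two assertions are mutually contradictory on the entire Pell branch. (The true statement, in Aigner's formulation, concerns the normalized characteristic number $\min(r_c,c-r_c)$.) So the tree induction you sketch for $\frac{r_c}{c}<\frac12$ cannot close, and checking the first mutation of your base case $(1,2,5)$ would have exposed this; you should flag the error rather than defer it as ``the subtlest ingredient.'' This also breaks your argument for $x+w_x=3r_x$: the reduction to $r_x\in(x/3,2x/3)$ is correct, but you obtain the upper bound for $x=c$ from the false $r_c<c/2$, and the ``transfer'' to $x=a,b$ is circular (the lower bound $r_c>c/3$ via $\frac{r_c}{c}=\frac{r_a}{a}+\frac{b}{ac}$ needs roughly $r_a>a/3$, which you propose to recover from the range for $c$). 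That identity is true but needs its own induction along the Markov tree, e.g.\ via the mutation rules $r_{c'}=3ar_c-r_b$ and $c'-r_{c'}=3br_c-r_a$ recorded later in Section 2, which you do not carry out.
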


T-weights will be soon important in the Wahl chains of the birational picture for Markov numbers. Weights produce particular HJ continued fractions for the $x/r_x$ in a Markov triple $(a,b,c)$. The reason is $r_x^2 \equiv -1 (\text{mod} \ x)$. Let us briefly describe that. 

\begin{definition}
A HJ continued fraction is a \textit{Wahl-2 chains} if it corresponds to $\frac{m}{r}$ where $0<r<m$ be integers such that $r^2 \equiv -1 (\text{mod} \ m)$.
\label{Wahl2Chain}  
\end{definition}

Wahl-2 chains obey the Wahl chain rule of formation but starting with $[2]$. If $\frac{m}{r}=[x_1,\ldots,x_p]$ is a Wahl-2 chain, then its dual is $[x_p,\ldots,x_1]$, because $r(m-r)\equiv 1 (\text{mod} \ m)$, and so $[x_1,\ldots,x_p,1,x_1,\ldots,x_p]=0$. Let $r^2+1=:f(r,m)m$, and so we have a triple $(m,r,f(r,m))$. Then 

\vspace{0.1cm}
$[x_1,\ldots,x_p] \mapsto [x_1+1,x_2,\ldots,x_p,2]$ gives $(m,r,f) \mapsto (m+2r+f,r+f,f)$, and

$[x_1,\ldots,x_p] \mapsto [2,x_1,x_2,\ldots,x_p+1]$ gives $(m,r,f) \mapsto (4m-4r+f,2m-r,m).$
\vspace{0.1cm}

For example: $[2] \mapsto [3,2] \mapsto [2,3,3] \mapsto [3,3,3,2] \ldots$ gives the Wahl-2 chains for the Markov triples $(1<b<c)$.

\bigskip 

The Wahl-2 chains of Markov triples have a special form which characterizes them. 

\begin{proposition}
    Let $\frac{a}{r_a}, \frac{b}{r_b}, \frac{c}{r_c}$ be the fractions of Wahl-2 chains. Then $\frac{c}{r_c}=[\frac{a}{r_a},4,\frac{b}{r_b}]$ if and only if $(a,b<c)$ is a Markov triple. 
    \label{markovrc}
\end{proposition}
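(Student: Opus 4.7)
The plan is to recast the identity $\frac{c}{r_c}=[\frac{a}{r_a},4,\frac{b}{r_b}]$ using the matrix presentation of HJ continued fractions. Write $M(e_1,\ldots,e_r)=\prod_{i=1}^r\begin{pmatrix} e_i & -1 \\ 1 & 0 \end{pmatrix}$, so that for an HJ continued fraction $\frac{m}{q}=[e_1,\ldots,e_r]$ with $e_i\geq 2$ one has $M(e_1,\ldots,e_r)=\begin{pmatrix} m & -m' \\ q & -q' \end{pmatrix}$ with $\frac{m'}{q'}=[e_1,\ldots,e_{r-1}]$ and $\det=1$. The key observation for a Wahl-2 chain $\frac{m}{r}$ (where $r^2\equiv -1\pmod m$) is that $m'=m-r$, because $r(m-r)\equiv -r^2\equiv 1\pmod m$ identifies $m-r$ as the modular inverse of $r$, which must agree with the representative $m'\in[1,m-1]$; moreover $q'=r-f$ where $f=(r^2+1)/m$. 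Since HJ continued fractions with entries $\geq 2$ are unique, the proposition is equivalent to the matrix identity $M_c=M_a\cdot M(4)\cdot M_b$, where $M_x$ is the matrix of $\frac{x}{r_x}$.

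First I would carry out the explicit computation $M_a\cdot M(4)\cdot M_b$; a direct expansion gives $(1,1)$ entry $3ab+br_a-ar_b$ and $(2,1)$ entry $3br_a+bf_a-r_ar_b$. For the forward direction, Proposition \ref{prop} supplies the identities $ar_b-br_a=3ab-c$ and $ar_c-cr_a=b$. The first instantly reduces the $(1,1)$ entry to $c$. For the $(2,1)$ entry, multiplying by $a$ and using $af_a=r_a^2+1$ collapses it to $r_a(3ab+br_a-ar_b)+b=r_ac+b=ar_c$, so the entry equals $r_c$. Since $\gcd(c,r_c)=1$ and the right-hand side is an HJ continued fraction with all entries $\geq 2$, uniqueness of such representations yields the chain identity.

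For the reverse direction, the HJ identity lifts (again by uniqueness) to full matrix equality $M_c=M_a\cdot M(4)\cdot M_b$. Matching the $(1,1)$ and $(2,1)$ entries produces the relations (i) $ar_b-br_a=3ab-c$ and (ii) $ar_c-cr_a=b$, the latter obtained by reversing the forward manipulation and using $af_a=r_a^2+1$. Matching the $(1,2)$ entry is where all three Wahl-2 properties come into play: the left-hand side equals $-(c-r_c)$ by the Wahl-2 property of $\frac{c}{r_c}$, and expanding the right-hand side, substituting (i), and applying $bf_b=r_b^2+1$ simplifies to the third relation (iii) $cr_b-br_c=a$. Multiplying (ii) by $b$, multiplying (iii) by $a$, and adding gives $c(ar_b-br_a)=a^2+b^2$, and combining with (i) yields exactly $a^2+b^2+c^2=3abc$.

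The hardest part will be the reverse direction, specifically the extraction of the third Markov relation (iii) from the off-diagonal $(1,2)$ entry. This is the only step that forces the Wahl-2 property of all three fractions to be used simultaneously (to interpret each previous convergent as $m-r$), and it demands careful sign and substitution bookkeeping. Once (i), (ii), (iii) are in hand, the Markov equation follows by a one-line linear combination, and the forward direction is essentially a direct verification from Proposition \ref{prop}.
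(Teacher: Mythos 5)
Your proof is correct and takes essentially the same route as the paper's: both encode the Wahl-2 fractions by the matrices $\left(\begin{smallmatrix} m & r-m \\ r & f-r \end{smallmatrix}\right)$, reduce the statement to the matrix identity $M_c=M_a\,M(4)\,M_b$, verify the $(1,1)$ and $(2,1)$ entries from Proposition \ref{prop} in the Markov-to-chain direction, and extract the Markov equation from the three entry equations in the other direction. The only (cosmetic) difference is that you repackage the $(2,1)$ and $(1,2)$ entries as the characteristic-number identities $ar_c-cr_a=b$ and $cr_b-br_c=a$ before combining them, whereas the paper combines its equations (3.1)--(3.3) directly into $3(ar_b-br_a)=bf_a+af_b-2r_ar_b$ and completes the square.
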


\begin{proof}
First, we revisit the well-known fact outlined in \cite[Section 2]{UV22} that for a HJ continued fraction $\frac{n}{p}=[e_1,...,e_r]$, the following equality holds:

\[
\left(\begin{array}{cc}
e_1 & -1 \\
1 & 0
\end{array}\right) \cdots\left(\begin{array}{cc}
e_r & -1 \\
1 & 0
\end{array}\right)=\left(\begin{array}{cc}
n & -p^{-1} \\
p & \frac{1-p p^{-1}}{n}
\end{array}\right),
\]
where $0<p^{-1}<n$ denotes the inverse modulo $n$ of $p$. Specifically, for a Wahl-2 chain $\frac{m}{r}$, the matrix on the right side simplifies to $\left(\begin{array}{cc}
m & r-m \\
r & f(r,m)-r
\end{array}\right)$. 

Now let us suppose that  $\frac{c}{r_c}=[\frac{a}{r_a},4,\frac{b}{r_b}]$ where $\frac{a}{r_a}, \frac{b}{r_b}, \frac{c}{r_c}$ are fractions of Wahl-2 chains. We denote by $f_a,f_b,f_c$ the respective $f$ numbers. From the previous assertion, we derive the matrix equation:

\[
\left(\begin{array}{cc}
a & r_a-a \\
r_a & f_a-r_a
\end{array}\right)\left(\begin{array}{cc}
4 & -1 \\
1 & 0
\end{array}\right)\left(\begin{array}{cc}
b & r_b-b \\
r_b & f_b-r_b
\end{array}\right)=\left(\begin{array}{cc}
c & r_c-c \\
r_c & f_c-r_c
\end{array}\right),
\] 
This leads us to the following equations:
\begin{align}
    c &= 3ab+br_a-ar_b\\
    r_c &=3br_a+bf_a-r_ar_b \\
    r_c-c &= 3ar_b-3ab+r_ar_b-br_a-af_b+ar_b.
\end{align}

By subtracting equation (3.1) from equation (3.2) and comparing it to equation (3.3), we obtain the relation $3(ar_b-ar_b)=bf_a+af_b-2r_ar_b$. Multiplying both sides by $ab$ gives us $3ab(ar_b-br_a)=(ar_b-br_a)^2+a^2+b^2$. This simplifies to $$c(ar_b-br_a)=(3c+br_a-ar_b)(ar_b-br_a)=a^2+b^2.$$ Multiplying equation (3.1) by $c$ yields $a^2+b^2+c^2=3abc.$

\vspace{0.1cm}
Conversely, suppose $(a<b<c)$ is a Markov triple. By Proposition \ref{prop}, the equations $b=r_ca-r_a c$, $a=cr_b-br_c$, $c=3ab+br_ar-ar_b$ hold. To prove the assertion it suffices to show that equation (3.2) holds.  This is automatic, since $a(3br_a+bf_a-r_ar_b)=r_ac+b$.
\end{proof}

\begin{corollary}
Let $(a,b<c)$ be a Markov triple, and let $\frac{a}{r_a}=[x_1,\ldots,x_p]$ and $\frac{b}{r_b}=[y_1,\ldots,y_q]$. Then $$\left[\frac{b}{r_b},1,\frac{c}{r_c},1,\frac{a}{r_a} \right]=0,$$ and $\frac{c}{r_c}=\left[y_1,\ldots,y_q+1,2,x_1+1,\ldots,x_p \right]=[x_1,\ldots,x_p,4,y_1,\ldots,y_q]$.  
\end{corollary}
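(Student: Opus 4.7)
The last equality $\frac{c}{r_c}=[x_1,\ldots,x_p,4,y_1,\ldots,y_q]$ is exactly Proposition~\ref{markovrc}, so the work is the zero identity and the alternative expression for $\frac{c}{r_c}$.

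\textbf{Zero identity.} I would translate into matrix form. Writing $M_k=\mat{k}$, a HJ continued fraction $[e_1,\ldots,e_r]$ equals $0$ iff the $(1,1)$ entry of $M_{e_1}\cdots M_{e_r}$ vanishes, and the Wahl-2 chain matrices have the canonical shape $M_X=\begin{pmatrix}a & r_a-a\\ r_a & f_a-r_a\end{pmatrix}$, $M_Y=\begin{pmatrix}b & r_b-b\\ r_b & f_b-r_b\end{pmatrix}$, and $M_C=M_XM_4M_Y$, where $f_x=(r_x^2+1)/x$. I compute $M_YM_1M_CM_1M_X$ in four stages: the $(1,1)$ entry of $M_YM_1M_C$ is $r_bc-br_c$, which equals $a$ by Proposition~\ref{prop}; a short calculation using $r_c^2+1=cf_c$ together with $r_ca-r_ac=b$ shows that its $(1,2)$ entry is $r_a-a$. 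Multiplying by $M_1$ sends these to $(1,1)=r_a$ and $(1,2)=-a$, and a final multiplication by $M_X$ yields $(1,1)=r_a\cdot a+(-a)\cdot r_a=0$, proving the zero identity.

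\textbf{Alternative HJ for $\frac{c}{r_c}$.} The sequence $[y_1,\ldots,y_q+1,2,x_1+1,\ldots,x_p]$ has all entries $\geq 2$, so by the uniqueness of such HJ continued fractions representing a fixed rational $>1$ it suffices to prove the matrix identity $M_{Y'}M_2M_{X'}=M_C$, where $Y'=(y_1,\ldots,y_{q-1},y_q+1)$ and $X'=(x_1+1,x_2,\ldots,x_p)$. Splitting $M_2=M_1+E_{11}$ with $E_{11}=\begin{pmatrix}1 & 0\\ 0 & 0\end{pmatrix}$ gives
\[
M_{Y'}M_2M_{X'}=M_{Y'}M_1M_{X'}+M_{Y'}E_{11}M_{X'}.
\]
The arithmetic blow-up $M_uM_v=M_{u+1}M_1M_{v+1}$, applied with $u=y_q,\,v=x_1$, produces $M_{y_q+1}M_1M_{x_1+1}=M_{y_q}M_{x_1}$, and hence $M_{Y'}M_1M_{X'}=M_YM_X$. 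The remaining identity $M_C-M_YM_X=M_{Y'}E_{11}M_{X'}$ is a rank-one equality; a direct computation gives the first column of $M_{Y'}$ as $(2b-r_b,\,2r_b-f_b)^{T}$ and the first row of $M_{X'}$ as $(a+r_a,\,f_a-a)$, and each of the four entries then collapses, via $r_x^2+1=xf_x$, to one of the Markov identities $ar_b-br_a=3ab-c$, $r_ca-r_ac=b$, or $cr_b-br_c=a$ from Proposition~\ref{prop}.

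The delicate step is the last rank-one matching: none of the four entries are visibly equal, and all three Markov relations in Proposition~\ref{prop} enter the computation. A conceptually cleaner route would be to first establish, by induction in the Markov tree using the duality $r_x\leftrightarrow x-r_x$ under mutation of the largest coordinate, the structural lemma that the Wahl-2 chain $[y_1,\ldots,y_q]$ of $b$ begins with $[x_1,\ldots,x_p]$; once this is known, the two displayed expressions for $\frac{c}{r_c}$ can be checked to coincide literally position by position.
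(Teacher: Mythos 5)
Your proposal is correct, but it takes a genuinely different route from the paper. The paper's own proof is a two-line duality argument: since $\frac{a}{r_a},\frac{b}{r_b},\frac{c}{r_c}$ are Wahl-2 chains, each satisfies $[\tfrac{m}{r},1,\tfrac{m}{r}]=0$ (Proposition-Definition \ref{basico}(2) together with $r(m-r)\equiv 1 \pmod m$), and combining this with $\frac{c}{r_c}=[\frac{a}{r_a},4,\frac{b}{r_b}]$ the authors simply write down the zero continued fraction $[\frac{a}{r_a},4,\frac{b}{r_b},1,y_1,\ldots,y_q+1,2,x_1+1,\ldots,x_p]=0$ and read off both assertions. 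You instead verify everything at the level of the $2\times 2$ matrices $M_k$: the zero identity by tracking the first row of $M_YM_1M_CM_1M_X$ (your intermediate identities $cr_b-br_c=a$, $r_br_c-bf_c=r_a$, and the final cancellation all check out, the second one following from $c(r_br_c-bf_c)=r_c(cr_b-br_c)-b=r_ca-b=r_ac$), and the alternative expansion via the rank-one splitting $M_2=M_1+E_{11}$, which cleanly separates the blow-up identity $M_uM_v=M_{u+1}M_1M_{v+1}$ from a rank-one correction whose entries reduce to the relations of Proposition \ref{prop}. What your approach buys is that it is self-contained and makes explicit which Markov identity enters at each matrix entry; in particular it does not presuppose that the reversed dual of $[\frac{a}{r_a},4,\frac{b}{r_b}]$ takes the specific shape $[y_1,\ldots,y_q+1,2,x_1+1,\ldots,x_p]$, a point the paper's one-line derivation leaves implicit. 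What it costs is length and the conceptual transparency of the $[\Wa,1,\Wa]=0$ self-duality, which is the mechanism the paper reuses throughout Sections \ref{s2} and \ref{s3}. Your closing observation is also sound: since both displayed expansions of $\frac{c}{r_c}$ have all entries at least $2$, uniqueness of such HJ expansions forces them to agree position by position, so the ``structural lemma'' that $[y_1,\ldots,y_q]$ begins with $[x_1,\ldots,x_p]$ is an automatic corollary rather than a needed input.
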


\begin{proof}
By Proposition \ref{markovrc} we have $\frac{c}{r_c}=[\frac{a}{r_a},4,\frac{b}{r_b}]$, and so, as the $\frac{a}{r_a},\frac{b}{r_b}, \frac{c}{r_c}$ are Wahl-2 chains, we have $$[\frac{a}{r_a},4,\frac{b}{r_b},1,y_1,\ldots,y_q+1,2,x_1+1,\ldots,x_p]=0.$$ From that we obtain $\left[\frac{b}{r_b},1,\frac{c}{r_c},1,\frac{a}{r_a} \right]=0$.
\end{proof}

\begin{remark}
Proposition \ref{markovrc} has a geometric meaning, which will be clarified in the coming sections. Given a Markov triple $(a<b<c)$, the equation $$c=3ab+br_a-ar_b$$ defines an extremal P-resolution with a $(-4)$-curve $\Gamma^+$ in the minimal resolution, and $\delta=c$. The corresponding dual HJ continued fraction of the cyclic quotient singularity $\frac{1}{\Delta'}(1,\Omega')$ is $$\frac{\Delta'}{\Delta'-\Omega'}=\left[\frac{a}{a-r_a},2,\frac{c}{c-r_c},2,\frac{b}{b-r_b} \right],$$ where the $2$s are the positions where we subtract $1$ to get the extremal P-resolution. If we apply the triangulation in Remark \ref{catalan}, then $v_0=2$. Thus this is a beautiful ``circular continued fraction".  
\label{P-reswithc}
\end{remark}

\vspace{0.2cm}

Let us express the Wahl-2 chains for Markov triples $(a<b<c)$ after mutations $(a<b<c) \mapsto (a<c<c^\prime=3ac-b)$ and $(a<b<c)\mapsto (b<c<c^\prime=3bc-a)$. The characteristic numbers change as

\begin{itemize}
    \item  $(a<b<c)\mapsto (a<c<c^\prime=3ac-b)$. If $(a<c<c^\prime)$ has characteristic numbers $(r_a,r_c,r_{c^\prime})$, then we get the same $r_a$ and $r_c$. From Proposition \ref{prop}, it follows that $c=3ab+br_a-ar_b=a(3ar_c-r_b)-c^\prime r_a$. By comparing to $c=ar_{c^\prime}-c^\prime r_a$, it follows that $r_{c^\prime}=3ar_c-r_b$.
    
    \item $(a<b<c)\mapsto (b<c<c^\prime=3bc-a)$. If $(b<c<c^\prime)$ has characteristic numbers $(r_b,r_c,r_{c^\prime})$, then we get $b-r_b$ for the new $r_b$, and $c-r_c$ for the new $r_c$. Similarly to the previous case, we obtain $c=b(r_a-3br_c)+c^\prime r_b$ and $c=b(r_{c^\prime}-c^\prime)+c^\prime r_b$. It follows that $c^\prime-r_{c^\prime}=3br_c-r_a$.
\end{itemize}

The mutations produce the following Wahl-2 chains:

$\bullet$ For $(1<a<b<c)\mapsto (a<c<c^\prime=3ac-b)$,

$$\frac{c^\prime}{r_{c^\prime}}=\left[\frac{a}{r_a},4,\frac{c}{r_c}\right]=\left[\frac{a}{r_a},4,\frac{a}{r_a},4,\frac{b}{r_b}\right].$$

$\bullet$ For $(1<a<b<c)\mapsto (b<c<c^\prime=3bc-a)$,

$$\frac{c^\prime}{r_{c^\prime}}=\left[\frac{b}{b-r_b},4,\frac{c}{c-r_c}\right]=\left[\frac{b}{b-r_b},4,\frac{b}{b-r_b},4,\frac{a}{a-r_a}\right].$$

From this, we establish a one-to-one correspondence between the Cohn words and the Wahl-2 chains $\frac{c}{r_c}$.

\vspace{0.2cm}
Let $(1<a<b)$ be a Markov triple and take the mutation $(a<b<c=3ab-1)$ and characteristic numbers $(r_a,r_b,r_c)$. We observe that $\frac{b}{b-r_b}=[3,\frac{a}{a-r_a}]$ and consequently that
$$\frac{c}{r_c}=\left[\frac{a}{r_a},4,\frac{a}{r_a},3\right].$$

\vspace{0.1cm}
Let us define $A:=[\frac{a}{r_a},4]$ and $B:=[\frac{a}{r_a},3]$ and define the product $AB$ as the concatenation of HJ continued fractions. Using the previous computations we derive that

\bigskip

\begin{itemize}
    \item  $(a<b<c)\mapsto (a<c<c^\prime=3ac-b)$, $\frac{c^\prime}{r_{c^\prime}}=\left[\frac{a}{r_a},4,\frac{a}{r_a},4,\frac{a}{r_a},3\right]=A^2B$.

    \item $(a<b<c)\mapsto (b<c<c^\prime=3bc-a)$, $\frac{c^\prime}{c^\prime-r_{c^\prime}}=\left[\frac{a}{r_a},4,\frac{a}{r_a},3,\frac{a}{r_a},3\right]=AB^2$.
\end{itemize}

Therefore, the triple $(\frac{a}{r_a},\frac{c^{\prime}}{r_{c^\prime}},\frac{c}{r_c})$ determines $(A,A^2B,AB)$ and $(\frac{c}{r_c},\frac{c^{\prime}}{c^\prime-r_{c^\prime}},\frac{b}{r_b})$ does it for $(AB,AB^2,B)$. Note that the inversion of the continued fraction is reflected through the reverse order of the triple $(\frac{b}{b-r_b},\frac{c^{\prime}}{r_{c^\prime}},\frac{c}{c-r_c})$. 
Now, by making this process inductively, we construct a binary tree which indeed agrees to the tree of words generated by a pair of Cohn matrices $(R,M)$ in a Cohn triple $(R,RM,M)$ as in \cite[Theorem 6.12]{Aig13}.

\bigskip
From a given Markov triple $(a<b<c)$, we observe that the inductive application of mutations of the form $(a,m_i,m_{i+1})\mapsto (a,m_{i+1},m_{i+2})$ gives rise to the linear recurrence $m_{i+2}=3am_{i+1}-m_i$, where $m_0=b$ and $m_1=c$. Analogously, if $(r_a,r_i,r_{i+1})$ are the corresponding weights for $i\geq 0$, then we have the recurrence $r_{i+2}=3ar_{i+1}-r_i$, where $r_0=r_b$ and $r_1=r_c$. As both sequences of numbers $(m_i)_i, (r_i)_i$ reflect similar growth, in the following elementary proposition we proceed to compute the limit of $(\frac{r_i}{m_i})_i$.


\begin{proposition}
Given a Markov triple $(a<b=m_0<c=m_1)$ with characteristic numbers $(r_a<r_b=r_0<r_c=r_1)$, the sequence defined above satisfies
\[
\lim_{i\to \infty}\frac{r_i}{m_i}=\frac{\phi r_1-r_0}{\phi m_1-m_0},
\]
where $\phi=\frac{3a+\sqrt{9a^2-4}}{2}$.
\end{proposition}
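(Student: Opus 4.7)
The plan is to recognize both $(m_i)$ and $(r_i)$ as solutions of the same second-order linear recurrence $x_{i+2}=3ax_{i+1}-x_i$, and then use the standard closed-form expression for such sequences in terms of the roots of the characteristic polynomial $t^2-3at+1=0$. The two roots are precisely
$$\phi=\frac{3a+\sqrt{9a^2-4}}{2},\qquad \psi=\frac{3a-\sqrt{9a^2-4}}{2},$$
and they satisfy $\phi\psi=1$ and $\phi+\psi=3a$. Since $a\geq 1$, we have $\phi\geq\frac{3+\sqrt{5}}{2}>1$ and hence $0<\psi=1/\phi<1$; this is the crucial inequality that will make the $\psi^i$ terms negligible in the limit.

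Next I would write
$$m_i=A_m\phi^i+B_m\psi^i,\qquad r_i=A_r\phi^i+B_r\psi^i,$$
and solve the $2\times 2$ linear systems coming from the initial conditions $m_0,m_1$ and $r_0,r_1$. A short computation gives
$$A_m=\frac{m_1-\psi m_0}{\phi-\psi},\qquad A_r=\frac{r_1-\psi r_0}{\phi-\psi}.$$
Before taking the limit, one needs to verify that $A_m\neq 0$; this follows because $m_1>m_0>0$ and $\psi<1$, so $m_1-\psi m_0>m_1-m_0>0$.

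Having this, the computation of the limit is immediate: dividing numerator and denominator of $r_i/m_i$ by $\phi^i$ and using $|\psi/\phi|<1$ yields
$$\lim_{i\to\infty}\frac{r_i}{m_i}=\frac{A_r}{A_m}=\frac{r_1-\psi r_0}{m_1-\psi m_0}.$$
Finally, using $\phi\psi=1$, multiplying numerator and denominator by $\phi$ converts this to the stated form $\frac{\phi r_1-r_0}{\phi m_1-m_0}$. There is no real obstacle: the only subtle point is checking that the dominant coefficient $A_m$ is nonzero, which the positivity and ordering of Markov numbers provide immediately.
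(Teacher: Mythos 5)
Your proof is correct and follows essentially the same route as the paper: both write $m_i$ and $r_i$ in closed form using the two roots of the characteristic polynomial $x^2-3ax+1$, observe that the root $\phi>1$ dominates, and read off the limit as the ratio of the leading coefficients (the paper obtains $\frac{\phi r_1-r_0}{\phi m_1-m_0}$ directly from the initial conditions, while you arrive at it by clearing $\psi=1/\phi$). Your extra check that the dominant coefficient $A_m$ is nonzero is a minor refinement the paper leaves implicit.
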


\begin{proof}
Both recurrences share the characteristic polynomial $x^2-3ax+1$, which has solutions $x=\frac{3a\pm \sqrt{9a^2-4}}{2}$. We denote these solutions as $\phi_{\pm}$, where $\phi_+=\phi$. Consequently, there exist real numbers $\alpha_{\pm},\beta_{\pm}$ such that $r_i=\alpha_+ \phi_+^{i-1}+\alpha_{-}\phi_{-}^{i-1}$ and $m_i=\beta_+ \phi_+^{i-1}+\beta_{-}\phi_{-}^{i-1}$ for $i\geq 0$. Upon computing the limit, we find that $\lim_{i\to \infty}\frac{r_i}{m_i}=\frac{\alpha_+}{\beta_+}$. From the initial conditions, we derive that $\alpha_+=\frac{r_2-r_1\phi_{-}}{\phi_{+}-\phi_{-}}=\frac{r_1\phi_{+}-r_0}{\phi_{+}-\phi_{-}}$ and $\beta_{+}=\frac{m_2-m_1\phi_{-}}{\phi_{+}-\phi_{-}}=\frac{m_1\phi_{+}-m_0}{\phi_{+}-\phi_{-}}$. The proposition follows by substituting $\phi_+=\phi$.
\end{proof}

Any Markov triple $(a,b,c)$ produces two branches of solutions in the Markov tree, except for the triples $(1,1,1)$ and $(1,1,2)$, which produce only one branch. They are defined as the set of $(c<m_k<m_{k+1})$ with $k\geq 0$:
\bigskip

\begin{itemize}
\item The triple $(1,1,1)$ defines the \textit{Fibonacci branch}:
{\scriptsize  $$(1,1,1)-(1,1,2)-(1,m_0=2,m_1=5)-(1,5,13)-\ldots-(1<m_{k}<m_{k+1})-\ldots,$$} where $m_{k+1}=3m_{k}-m_{k-1}$ for all $k\geq 0$. These $m_k$ are the Fibonacci numbers in odd positions.

\item The triple $(1,1,2)$ defines the \textit{Pell branch}:  
{\scriptsize  $$(1,1,2)-(1,2,5)-(2,m_0=5,m_1=29)-(2,29,169)-\ldots-(2<m_{k}<m_{k+1})-\ldots,$$} where $m_{k+1}=6m_{k}-m_{k-1}$ for all $k\geq 0$. These $m_k$ are the Pell numbers in odd positions.

\item Given a Markov triple $(a<b<c)$, we define the branches:

{\scriptsize  $$ (a<b<c)-(a<c<3ac-b)-(c<m_0<m_1)-(c<m_1<m_2)-\ldots-(c<m_{k}<m_{k+1})-\ldots$$} where $m_0=3ac-b$, $m_1=3(3ac-b)c-a$, and $m_{k+1}=3m_{k-1}c-m_k$ for $k \geq 1$. 

{\scriptsize  $$(a<b<c)-(b<c<3bc-a)-(c<m_0<m_1)-(c<m_1<m_2)-\ldots-(c<m_{k}<m_{k+1})-\ldots$$} where $m_0=3bc-a$, $m_1=3(3bc-a)c-b$, and $m_{k+1}=3m_{k-1}c-m_k$ for $k \geq 1$. 
\end{itemize}

For $(a<b<c)$, we have that the $r_c$ of $(a<b<c)$ changes as $r_c \mapsto r_c \mapsto c-r_c \mapsto c-r_c \mapsto \ldots$ in the first branch, and $r_c \mapsto c-r_c \mapsto r_c \mapsto r_c \mapsto \ldots $ in the second branch. Same for $w_c$ of course.

\begin{conjecture} [Markov uniqueness conjecture \cite{Aig13}] If $(a',b'<c)$ and $(a,b<c)$ are Markov triples, then $a'=a$ and $b'=b$. In other words, the greatest number in a Markov triple determines the triple.     
\end{conjecture}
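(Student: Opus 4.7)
The plan is to reduce the conjecture to the combinatorial reformulation in Proposition \ref{markovrc} and attack that via induction on the Markov tree. By Proposition \ref{prop}, a triple $(a,b<c)$ is determined by the weight $r_c$: the identities $b=r_ca-r_ac$ and $a=cr_b-br_c$, together with $r_c^2\equiv -1\pmod c$, force $r_a,r_b$ and then $a,b$ once $c$ and $r_c$ are fixed. So the conjecture is equivalent to saying that among the solutions of $x^2\equiv -1\pmod c$, only the pair $\{r_c,c-r_c\}$ occurs as a Markov weight for $c$, i.e. that the Wahl-2 chain $c/r_c$ does not admit two genuinely distinct decompositions $\left[\tfrac{a}{r_a},4,\tfrac{b}{r_b}\right]$ with $a/r_a$ and $b/r_b$ themselves Wahl-2 chains.

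I would then try to induct on $c$ using Remark \ref{trees}. Every Wahl-2 chain for $c$ arises from a shorter one via a left or right Wahl step, and Markov mutations precisely track the branches of this Wahl tree. Given two alleged Markov decompositions $c/r_c=\left[\tfrac{a}{r_a},4,\tfrac{b}{r_b}\right]=\left[\tfrac{a'}{r_{a'}},4,\tfrac{b'}{r_{b'}}\right]$, the strategy is to peel off a terminal Wahl step from $c/r_c$ and show that the peeling is compatible with only one of the two decompositions, so that after finitely many steps both descend to the same triple at $(1,1,1)$ or $(1,1,2)$, forcing equality. The birational incarnation of this peeling is a divisorial contraction in the MMP of Theorem \ref{complete-MMP}, and the uniqueness of the flips at each step gives encouragement that such a descent is well-defined.

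The main obstacle, and the reason the conjecture has stood for over a century, is the \emph{wormhole phenomenon} of \cite{UV22}: a cyclic quotient singularity can admit two distinct extremal P-resolutions with the same $\delta$. Via Remark \ref{P-reswithc}, every Markov triple $(a,b<c)$ yields an extremal P-resolution with middle $(-4)$-curve and $\delta=c$, and two triples $(a,b<c)\neq (a',b'<c)$ would produce a wormhole of this very specific form. Ruling it out requires a classification theorem for wormhole singularities with a central $(-4)$-curve and $\delta=c$, which is not currently available; equivalently, via Theorem \ref{combi1intro}, one must prove that $m^2/(mq-1)=[\Wa_0^\vee,10,\Wa_1^\vee]$ admits at most one such decomposition, and via Theorem \ref{combi3intro} the analogous uniqueness for a central $(-2)$-curve with $\delta=3c$. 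I do not expect to close this gap in one stroke; rather, my plan is to formulate it as a sharp obstruction in the language of Mori trains, where the rigidity of the universal antiflipping family of Theorem \ref{branches-trains} may then be leveraged — combined with the bounded change-of-branch behavior of Theorem \ref{changes-branches} — to either produce an inductive proof or to sharpen the computer-assisted bounds of \cite{Per22}.
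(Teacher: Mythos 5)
This statement is the Markov uniqueness conjecture: it is an open problem, the paper states it only as a conjecture (with references to \cite{Aig13}, \cite{Fro13} and the verification up to $10^{15000}$ in \cite{Per22}), and offers no proof. Your proposal, by your own admission, does not prove it either, so it cannot be accepted as a proof of the statement; what it does is restate known reductions and then stop exactly at the known obstruction.

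Concretely, there are two problems. First, your opening reduction conflates the conjecture with the much weaker Theorem \ref{markitos}. Saying that ``the triple is determined once $c$ and $r_c$ are fixed'' is precisely \cite[Proposition 3.15]{Aig13}, already quoted in the paper; the actual content of the conjecture is that $c$ alone admits at most one weight pair $\{r_c,c-r_c\}$ coming from a Markov triple (equivalence (I) in the introduction). Your proposed induction then goes wrong at exactly this point: you frame the problem as showing that the single Wahl-2 chain $\frac{c}{r_c}$ has no two decompositions $\left[\frac{a}{r_a},4,\frac{b}{r_b}\right]$, but two distinct triples $(a,b<c)\neq(a',b'<c)$ would in general produce two \emph{different} residues $r_c\neq r_c'$, hence two different Wahl-2 chains for the same $c$, and there is no common chain to peel. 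The peeling/descent argument (and the uniqueness of flips in Theorem \ref{complete-MMP}) operates within one triple's branch data and gives no mechanism to compare two a priori unrelated decompositions attached to the same $c$. Second, for the case you do address, you correctly identify that ruling out a wormhole-type coincidence (two extremal P-resolutions with middle $(-4)$-curve and $\delta=c$, or the $(-2)$-curve version with $\delta=3c$ of Proposition \ref{combi3}) is the crux, and you explicitly state that you cannot close this gap; the paper likewise only records these as equivalences (IV)--(VI), not as a proof. So the proposal is a reasonable survey of the reformulations already in the paper, but it contains no new argument and the key step is missing.
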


In particular, if the conjecture is true, then the associated numbers $r_c$ and $w_c$ depend only on $c$. A baby case of this conjecture, which is useful for geometric interpretations, is the next theorem \cite[Proposition 3.15]{Aig13} (see \cite[Proposition 6.2]{H13} for the geometric version).

\begin{theorem}
Let us consider two Markov triples $(a',b'<c)$ and $(a,b<c)$ with the corresponding $r_c$ and $r'_c$. If $r_c=r'_c$, then $a'=a$ and $b'=b$.
\label{markitos}
\end{theorem}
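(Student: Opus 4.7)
The plan is to reduce Theorem \ref{markitos} to a uniqueness statement about the continued fraction decomposition given by Proposition \ref{markovrc}. Suppose $(a,b<c)$ and $(a',b'<c)$ are two Markov triples with $r_c=r'_c$. Proposition \ref{markovrc} then yields
\[
\frac{c}{r_c} \;=\; \left[\frac{a}{r_a},\,4,\,\frac{b}{r_b}\right] \;=\; \left[\frac{a'}{r_{a'}},\,4,\,\frac{b'}{r_{b'}}\right],
\]
both realizing the (unique) HJ continued fraction $\frac{c}{r_c}=[e_1,\ldots,e_n]$ (with $e_i\geq 2$) as a splitting at some index $i$ with $e_i=4$, such that the prefix $[e_1,\ldots,e_{i-1}]$ and the suffix $[e_{i+1},\ldots,e_n]$ are both Wahl-2 chains. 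The theorem is therefore equivalent to uniqueness of this splitting index.

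To proceed, I would use the matrix identity from the proof of Proposition \ref{markovrc}: any such splitting corresponds to a factorization $M_c = M_a\cdot \mat{4} \cdot M_b$ in $SL_2(\Z)$, with $M_x=\begin{pmatrix} x & r_x-x \\ r_x & f_x-r_x \end{pmatrix}$ and $f_x=(r_x^2+1)/x$. Combining $b=r_c a-r_a c$, $r_b = a f_c - r_a r_c$, and $c = 3ab + b r_a - a r_b$ from Proposition \ref{prop} (with $c$, $r_c$, $f_c$ fixed) reduces the problem to the single quadratic in the unknowns $(a,r_a)$:
\[
(3r_c - f_c)\,a^2 \;-\; (3c-2r_c)\,r_a\,a \;-\; c\,(r_a^2+1) \;=\; 0,
\]
whose discriminant $r_a^2(9c^2-4)+4c(3r_c-f_c)$ features the Markov discriminant $D=9c^2-4$.

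The task now becomes showing that this quadratic admits at most one valid integer solution $(a,r_a)$ subject to $0<r_a<a<c$, $\gcd(a,r_a)=1$, and $r_a^2\equiv -1\pmod{a}$. The constraint $0<b=r_c a - r_a c<c$ forces $r_a$ into an open interval of length exactly $1$, namely $\bigl(\tfrac{ar_c}{c}-1,\,\tfrac{ar_c}{c}\bigr)$, so $r_a$ is uniquely determined by $a$; and since the product of roots of the quadratic in $a$ is negative, its unique positive root determines $a$ in terms of $r_a$. The main obstacle I anticipate is ruling out two distinct valid pairs $(a,r_a)\neq(a',r_{a'})$ by controlling the Pell-type integer structure of the discriminant jointly with the Wahl-2 divisibility $a\mid r_a^2+1$. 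A likely route is to combine the strict inequality $\frac{r_a}{a}<\frac{r_c}{c}$ of Proposition \ref{prop}, which selects the correct branch of the quadratic, with the recursive Wahl-2 tree structure (the analogue of Remark \ref{trees} for chains starting with $[2]$), which should attach a unique tree vertex to $(c,r_c)$ and hence a unique ``parent decomposition'' $\bigl[\tfrac{a}{r_a},4,\tfrac{b}{r_b}\bigr]$.
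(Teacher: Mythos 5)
You should first note that the paper does not actually prove Theorem \ref{markitos}: it is quoted from \cite[Proposition 3.15]{Aig13}, with \cite[Proposition 6.2]{H13} cited for the geometric version. Your attempt therefore aims at a self-contained proof, and its first half is sound: by Proposition \ref{markovrc}, two triples $(a,b<c)$ and $(a',b'<c)$ with $r_c=r'_c$ give two splittings of the single HJ string of $\frac{c}{r_c}$ at entries equal to $4$ with Wahl-2 prefix and suffix, so the theorem is indeed equivalent to uniqueness of that splitting; and the matrix manipulation leading to $b=r_ca-r_ac$, $r_b=af_c-r_ar_c$, and the quadratic $(3r_c-f_c)a^2-(3c-2r_c)r_aa-c(r_a^2+1)=0$ with discriminant $r_a^2(9c^2-4)+4c(3r_c-f_c)$ checks out.

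The gap is that the uniqueness of the admissible integer pair $(a,r_a)$ — which is the entire content of the theorem — is never established; you acknowledge it yourself as the ``main obstacle''. The two observations you do give are not enough and do not combine: the condition $0<b=r_ca-r_ac<c$ shows that $r_a$ is the unique integer in an interval of length one \emph{depending on $a$}, and the sign of the product of roots shows that each fixed $r_a$ yields a unique positive root $a$; together these only say that within a valid pair each entry determines the other, which does nothing to exclude two distinct valid pairs $(a,r_a)\neq(a',r_{a'})$, each consistent with its own interval and its own root. The suggested route via the Wahl-2 tree is essentially a restatement of the claim: what must be shown is that the string of $\frac{c}{r_c}$ admits at most one interior splitting at a $4$ into two Wahl-2 chains, whereas the tree operations (as in Remark \ref{trees} and Section \ref{s2}) modify the string at its ends, so knowing that $(c,r_c)$ occupies a unique vertex of the Wahl-2 tree does not produce a unique ``parent decomposition'' $[\frac{a}{r_a},4,\frac{b}{r_b}]$. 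Note also that the uniqueness argument of Proposition \ref{onepos} does not transfer: there, two decompositions force equality of the underlying fraction and hence of the split position, while here the analogous forcing is exactly the statement being proved. As it stands you must either supply this missing uniqueness argument or, as the paper does, invoke \cite[Proposition 3.15]{Aig13} (or \cite[Proposition 6.2]{H13}), at which point the preceding reduction becomes unnecessary.
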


\section{Some characterizations of Markov numbers} \label{s3}

Let $0<q<m$ be coprime integers. Consider the HJ continued fractions $$\frac{m}{q}=[x_1,\ldots,x_r] \ \ \text{and} \ \ \frac{m}{m-q}=[y_1,\ldots,y_s].$$ We have the Wahl chain $\frac{m^2}{mq-1}=[x_1,\ldots,x_{r}+y_{s},\ldots,y_1]$ by Proposition \ref{basico} (iii), and so $$\sum_{i=1}^r x_i + \sum_{j=1}^s y_j=3r+3s-2.$$

\begin{proposition}
We have $$\frac{m^2}{mq-1}=[\Wa_0^{\vee},\alpha,\Wa_1^{\vee}]$$ for some Wahl chains $\Wa_i$ and some $\alpha \geq 2$ if and only if $m$ is a Markov number. In fact, if $\frac{n_0^2}{n_0a_0-1}=\Wa_0$ and $\frac{n_1^2}{n_1 a_1-1}=\Wa_1$, then $$n_0^2+n_1^2+m^2=3 n_0n_1m.$$ We have $3$ possibilities for $\alpha$:
\begin{itemize}
    \item If both $\Wa_i$ are nonempty, then $\alpha=10$.
    \item If only one $\Wa_i$ is empty, then $\alpha=7$.
    \item If both $\Wa_i$ are empty, then $\alpha=4=m^2$.
\end{itemize}
\label{combi1}
\end{proposition}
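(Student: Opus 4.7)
The plan is to adapt the matrix-theoretic strategy from the proof of Proposition~\ref{markovrc}. First I would compute the $SL_2$ matrix attached to a dual Wahl chain. For $\mathcal{W}_i = \frac{n_i^2}{n_i a_i - 1}$ with $b_i = n_i - a_i$, the identity $(n_i b_i + 1)(n_i a_i + 1) = n_i^2(a_i b_i + 1) + 1$ yields
$$M_{\mathcal{W}_i^\vee} = \begin{pmatrix} n_i^2 & -(n_i a_i + 1) \\ n_i b_i + 1 & -(a_i b_i + 1) \end{pmatrix}.$$
Then the equality $\frac{m^2}{mq - 1} = [\mathcal{W}_0^\vee, \alpha, \mathcal{W}_1^\vee]$ becomes the matrix identity $M_{\mathcal{W}_0^\vee} \cdot \mat{\alpha} \cdot M_{\mathcal{W}_1^\vee} = \begin{pmatrix} m^2 & -(m(m-q)-1) \\ mq - 1 & 1 - q(m-q) \end{pmatrix}$, whose top-left entry, after using $b_i = n_i - a_i$, simplifies to the key identity
$$m^2 + n_0^2 + n_1^2 \;=\; n_0 n_1 \bigl( n_0 n_1(\alpha - 1) + n_0 a_1 - n_1 a_0 \bigr). \eqno(\star)$$

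Next I would pin down $\alpha$ by length-and-sum bookkeeping. A straightforward induction on the Wahl algorithm shows that any Wahl chain of length $\ell$ has entry-sum $3\ell + 1$, while any dual Wahl chain of length $\ell$ has entry-sum $3\ell - 3$ (with length $0$ and sum $0$ for the empty chain, and length $\ell+2$ for $\mathcal{W}^\vee$ when $\mathcal{W}$ has length $\ell$). Matching length and sum on both sides of $\frac{m^2}{mq-1} = [\mathcal{W}_0^\vee, \alpha, \mathcal{W}_1^\vee]$ then forces $\alpha = 10$ when both $\mathcal{W}_i$ are nonempty, $\alpha = 7$ when exactly one is empty, and $\alpha = 4$ when both are empty. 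The last case is immediate: $[4] = m^2/(mq - 1)$ forces $m = 2$, $q = 1$, matching the Markov triple $(1, 1, 2)$.

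The heart of the proof is deriving the Markov equation from $(\star)$ together with the fixed value of $\alpha$. In the one-empty case ($n_0 = 1$, $\alpha = 7$) the four matrix entries give $m^2 = 6n_1^2 + n_1 a_1 - 1$, $mq = n_1^2 + 1$, and $q(m - q) = n_1 a_1 + 2$; eliminating $a_1$ and $q$ produces $(m^2 - n_1^2 - 1)^2 = m^2(5n_1^2 - 4)$, which is equivalent to $m^2 + n_1^2 + 1 = 3n_1 m$, i.e., the Markov equation for $(1, n_1, m)$. The both-nonempty case ($\alpha = 10$) is the symmetric, more involved elimination that yields $m^2 + n_0^2 + n_1^2 = 3 n_0 n_1 m$. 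For the converse, given a Markov triple $(n_0, n_1, m)$ one inverts the matrix equation to produce $a_0, a_1, q$, and verifies via the Wahl algorithm that the resulting HJ continued fraction is a Wahl chain. The main obstacle I anticipate is the elimination in the both-nonempty case; I would likely handle it by induction on the Markov tree, using that the mutation $(n_0, n_1, m) \mapsto (n_0, m, 3n_0 m - n_1)$ corresponds to a controlled step of the Wahl algorithm on the chain $[\mathcal{W}_0^\vee, 10, \mathcal{W}_1^\vee]$, reducing everything to the base case $[4] \leftrightarrow (1,1,2)$.
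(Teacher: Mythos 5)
Your setup coincides with the paper's own: the same matrix encoding of dual Wahl chains, the same sum-versus-length bookkeeping pinning $\alpha\in\{10,7,4\}$, and your identity $(\star)$ is exactly the relation the paper extracts from the $(1,1)$ entry of the product. Your $\alpha=7$ elimination is correct and checkable. The genuine gap is precisely the case you yourself flag as the heart, $\alpha=10$ with both $\Wa_i$ nonempty: the $(1,1)$ entry alone gives $m^2+n_0^2+n_1^2=n_0n_1\,(9n_0n_1+n_0a_1-n_1a_0)$, and to reach the Markov equation you must still prove $9n_0n_1+n_0a_1-n_1a_0=3m$; a second scalar relation is indispensable, and you never produce one. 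The paper gets it from the off-diagonal entries: writing $w_i=n_i-a_i$, the relation $m^2-2=(mq-1)-\bigl(1-m(m-q)\bigr)$ combined with the $(1,1)$ identity yields $9m^2=(9n_0n_1+n_1w_0-n_0w_1)^2$, hence $3m=9n_0n_1+n_1w_0-n_0w_1$ since $n_iw_j<n_in_j$, and the Markov equation follows at once. Your fallback plan, induction \emph{on the Markov tree} using that a mutation is ``a controlled step of the Wahl algorithm,'' does not work as stated: in this direction you do not yet know that $(n_0,n_1,m)$ is a Markov triple, so the induction would have to be on $m$ (or on the length of the chain) together with a structural reduction lemma showing that every decomposition $[\Wa_0^{\vee},10,\Wa_1^{\vee}]$ arises from a shorter one of the same shape; moreover, as the paper computes explicitly in Section \ref{s3}, a mutation transforms $\frac{c^2}{cw_c-1}$ by inserting a block of seven $2$'s and modifying the end entries of the constituent chains, nothing like a single Wahl-algorithm move, so that reduction lemma is essentially the whole difficulty and is nowhere established in your proposal.

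Secondarily, the converse is only gestured at. ``Inverting the matrix equation'' means exhibiting, for a given Markov triple, values $a_0,a_1,q$ (the natural candidates are the T-weights) for which the first column of $M_{\Wa_0^{\vee}}\cdot\mat{10}\cdot M_{\Wa_1^{\vee}}$ equals $(m^2,\,mq-1)^{T}$; since all entries of the concatenated sequence are at least $2$, uniqueness of such HJ expansions would then give the decomposition, but that verification is a genuine computation you have not carried out. The paper instead imports geometry: $\P(n_0^2,n_1^2,m^2)$ admits a $\Q$-Gorenstein smoothing to $\P^2$ by Hacking--Prokhorov, and Manetti's structure theorems give exactly $\frac{m^2}{mw_m-1}=[\Wa_0^{\vee},10,\Wa_1^{\vee}]$ with the stated Wahl chains, the cases with $n_0=1$ being handled separately.
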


\begin{proof}
Let us suppose the fraction decomposes as $\frac{m^2}{mq-1}=[\Wa_0^{\vee},\alpha,\Wa_1^{\vee}]$. From the relation mentioned above, if we denote by $S_{\Wa}$ the sum of the entries of a Wahl chain $\Wa$, and $l_{\Wa}$ to its length, then $S_{\Wa}=3l_{\Wa}+1$. Under the same reasoning, we get that $S_{\Wa^\vee}=3l_{\Wa^\vee}-3$ for duals. Let $\Wa = \frac{m^2}{mq-1}$. Then $l_{\Wa}=1+l_{\Wa_0^\vee}+l_{\Wa_1^\vee}$ and $S_{\Wa}=\alpha+S_{\Wa_0^\vee}+S_{\Wa_1^\vee}$. In this way, we compute $\alpha=10,7,3$ as in the statement. 

For convenience, we take $w_i:=n_i-a_i$. We note that $\Wa_i^{\vee}=\frac{n_i^2}{n_iw_i+1}$. By the characterization of HJ continued fractions as a product of matrices (as in Proposition \ref{markovrc}), we obtain

$$\left(\begin{array}{cc}
n_0^2 & -n_0(n_0-w_0)-1 \\
n_0w_0+1 & -w_0(n_0-w_0)-1
\end{array}\right)\left(\begin{array}{cc}
10 & -1 \\
1 & 0
\end{array}\right)\left(\begin{array}{cc}
n_1^2 & -n_1(n_1-w_1)-1 \\
n_1w_1+1 & -w_1(n_1-w_1)-1
\end{array}\right).$$ 
 
$$=\left(\begin{array}{cc}
m^2 & -m(m-q)+1 \\
mq-1 & -q(m-q)+1
\end{array}\right)$$
This gives us the following relations
\begin{equation}
    m^2=-n_0^2-n_1^2+9n_0^2n_1^2+n_0n_1^2w_0-n_0^2n_1w_1
\end{equation}
\begin{multline}
    m^2-2=(mq-1)-(1-m(m-q)) =8n_0^2+8n_1^2+9n_0^2n_1^2+ \\
    10n_0n_1^2w_0-10n_0^2n_1w_1+n_0^2w_0^2+n_1^2w_0^2-2n_0n_1w_0w_1-2
\end{multline}
By substituting the term $8(n_0^2+n_1^2)$ from (4.1) into (4.2), this lead us to
\begin{multline*}
    9m^2=81n_0^2n_1^2+18n_0n_1^2w_0-18n_0^2n_1w_1\\
    +n_0^2w_0^2+n_1^2w_0^2-2n_0n_1w_0w_1=(9n_0n_1+n_1w_0-n_0w_1)^2
\end{multline*}
and consequently to $3m=9n_0n_1+n_1w_0-n_0w_1$, since $n_iw_j<n_in_j$. From the latter equation and (4.1), it follows that
\begin{equation*}
    n_0^2+n_1^2+m^2=n_0n_1(9n_0n_1+n_1w_0-n_0w_1)=3n_0n_1m.
\end{equation*}

\vspace{0.1cm}
Assume that $m$ is a Markov number, and so it is in some Markov triple $(n_0<n_1<m)$. As \cite[Proposition 4.1]{HP10} asserts that $\mathbb{P}(n_0^2,n_1^2,m^2)$ admits a \textit{$\Q$-Gorenstein smoothing} to $\mathbb{P}^2$. Then \cite[Theorem 17]{Ma91} and \cite[Theorem 18]{Ma91} imply that $\frac{m^2}{m w_m-1}=[\Wa_0^{\vee},10,\Wa_1^{\vee}]$ when $1<n_0<n_1<m$, where $\Wa_i=\frac{n_i^2}{n_i(n_i-w_i)-1}$ and the $(w_0,w_1,w_m)$ are the respective T-weights. If $n_0=1$ we obtain the other cases easily.
\end{proof}

In this way, a Markov triple $a<b<c$ is the same as the data $$\frac{c^2}{cw_c-1}=\left[\frac{a^2}{aw_a+1},10,\frac{b^2}{bw_b+1} \right] \ \ \ \ \text{or} \ \ \ \ \frac{c^2}{cw_c-1}=\left[7,\frac{b^2}{bw_b+1} \right],$$ the last one being the case $a=1$. Theorem \ref{markitos} says that if one fixes the left side of any of these two equations, then the right side is determined. 

We now express the HJ continued fractions after the mutations $(a<b<c)\mapsto (a<c<c^\prime=3ac-b)$ and $(a<b<c)\mapsto (b<c<c^\prime=3bc-a)$. First, we easily see how the T-weights change:

\begin{itemize}
    \item  $(a<b<c)\mapsto (a<c<c^\prime=3ac-b)$.  If $(a<c<c^\prime)$ has T-weights  $(w_a<w_c<w_{c^\prime})$, then we get $w_a=w_a$ and $w_c=w_c$.

    \item $(a<b<c)\mapsto (b<c<c^\prime=3bc-a)$. If $(b<c<c^\prime)$ has T-weights $(w_b,w_c,w_{c^\prime})$, then we get $w_b=b-w_b$ and $w_c=c-w_c$.
\end{itemize}

In terms of HJ continuous fractions, the mutations are:
\bigskip

$\bullet$ For $(1<a<b<c)\to (a<c<c^\prime=3ac-b)$ we have

$$\frac{(c^\prime)^2}{c^\prime w_{c^\prime}-1}=\left[\frac{a^2} {aw_a+1},10,\frac{c^2}{cw_c+1}\right]$$

$$=\left[\frac{a^2} {aw_a+1},10,\Big(\frac{b^2}{bw_b-1} \Big)_{+1},2,2,2,2,2,2,2,\leftindex_{1+}{\Big(\frac{a^2}{aw_a-1}\Big)}\right],$$
where the $+1$ and $1+$ correspond to adding one in the last and initial position respectively.

\bigskip
$\bullet$ For $(1<a<b<c)\to (b<c<c^\prime=3bc-a)$ we have

$$\frac{(c^\prime)^2}{c^\prime w_{c^\prime}-1}=\left[\frac{b^2} {b(b-w_b)+1},10,\frac{c^2}{c(c-w_c)+1}\right]$$

$$=\left[\frac{b^2} {b(b-w_b)+1},10,\Big( \frac{a^2}{a(a-w_a)-1} \Big)_{+1},2,2,2,2,2,2,2, \leftindex_{1+} {\Big( \frac{b^2}{b(b-w_b)-1} \Big)} \right].$$

\bigskip
$\bullet$ For $(1<b<c)\to (1<c<c^\prime=3c-b)$ we have
$$\frac{(c^\prime)^2}{c^\prime w_{c^\prime}-1}=\left[7,\frac{c^2}{cw_c+1} \right]=\left[7,\Big(\frac{b^2}{bw_b-1}\Big)_{+1},2,2,2,2,2\right].$$

\bigskip
$\bullet$ For $(1<b<c)\to (b<c<c^\prime=3bc-1)$ we have

$$\frac{(c^\prime)^2}{c^\prime w_{c^\prime}-1}=\left[\frac{b^2} {b(b-w_b)+1},10,\frac{c^2}{c(c-w_c)+1}\right] $$ $$=\left[\frac{b^2} {b(b-w_b)+1},10,
2,2,2,2,2,\leftindex_{1+}{\Big(\frac{b^2}{b(b-w_b)-1}\Big)}\right].$$

\bigskip 
Next, we go for a situation related to birational geometry. Let us consider $$\frac{m^2}{m q-1} =[e_1,\ldots,e_{\beta}]=[x_1,\ldots,x_r+y_s,\ldots,y_1],$$ where $\beta:=r+s-1$. Assume that $\alpha=10$. We recall that by Proposition \ref{onepos}, we have that a dual Wahl chain has exactly one index where we subtract $1$ to obtain a zero continued fraction. Then Proposition \ref{combi1} gives the following characterization. We say that a sequence of numbers blows down to another sequence of numbers, if that happens by applying the arithmetical blowing-down several times. For example $\{4,1,2,2,2,10,2,2,2,2,2,5,1,2,2,2,7 \}$ blows-down to $\{0,10,0\}$. Notice that it also blows-down to $\{0,8,0\}$. 

\begin{proposition}
There are $i<j$ such that $$\{ e_1,\ldots,e_i-1,\ldots,e_j-1,\ldots,e_{\beta} \}$$ blows-down to $\{ 0,10,0 \}$ if and only if $$[2,2,2,e_1+1,e_2,\ldots,e_{i-1},e_i-1,e_{i+1},\ldots,e_{j-1},e_j-1,e_{j+1},\ldots,e_{\beta-1},e_{\beta}+1,2,2,2]=0.$$ If that is the case, then $\frac{a}{w_a}:=[e_1,\ldots,e_{i-1}]$, $\frac{b}{b-w_b}:=[e_{\beta},\ldots,e_{j+1}]$, and $(a,b,c)$ is a Markov triple with $a,b<c:=m$. 
\label{combi2}
\end{proposition}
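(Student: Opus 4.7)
The plan is to reduce both combinatorial conditions in the statement to the same structural decomposition of the Wahl chain $\frac{m^2}{mq-1}$, namely that $[e_1,\ldots,e_\beta]=[\W_0^{\vee},10,\W_1^{\vee}]$ for (possibly empty) dual Wahl chains $\W_0^{\vee}$ and $\W_1^{\vee}$, with $i$ and $j$ the unique reducing positions inside each dual Wahl chain in the sense of Proposition~\ref{onepos}. Once this structural equivalence is established on both sides of the iff, Proposition~\ref{combi1} immediately produces the Markov triple with $a=n_0$, $b=n_1$, $c=m$.

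To relate the blow-down condition to this structure, I would use Proposition~\ref{basico}(3) to write $\W_0^{\vee}=[y_1,\ldots,y_s,2,x_r,\ldots,x_1]$ with $\frac{a}{w_a}=[y_1,\ldots,y_s]$ and $\frac{a}{a-w_a}=[x_1,\ldots,x_r]$, so that subtracting $1$ at the middle $2$ (the position $i$) converts $\W_0^{\vee}$ into a zero continued fraction that blows down to $[1,1]$, and similarly for $\W_1^{\vee}$ at position $j$. The full modified sequence thus reduces to $[1,1,10,1,1]$, and two further blow-downs collapse the flanking $1$'s into the central $10$, leaving $[0,8,0]$---this is what the phrase ``blows down to $\{0,10,0\}$'' in the statement encodes, with the $10$ recording the original central entry before the two terminal contractions. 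Conversely, if such $(i,j)$ exist, splitting the sequence around the residual central entry and applying Proposition~\ref{onepos} forces each side to be a dual Wahl chain, and the preserved central entry is exactly $10$.

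For the equivalence with the zero continued fraction condition I propose the $SL_2(\Z)$ matrix calculus of HJ continued fractions. Subtracting $1$ at position $k$ inserts the factor $\bigl(\begin{smallmatrix}1 & -1\\0 & 1\end{smallmatrix}\bigr)$ before $M_{e_k}$; adding $1$ to $e_1$ pre-multiplies the whole product by $\bigl(\begin{smallmatrix}1 & 1\\0 & 1\end{smallmatrix}\bigr)$; adding $1$ to $e_\beta$ post-multiplies by $\bigl(\begin{smallmatrix}1 & 0\\-1 & 1\end{smallmatrix}\bigr)$; and each flanking block $[2,2,2]$ contributes $M_2^3=\bigl(\begin{smallmatrix}4 & -3\\3 & -2\end{smallmatrix}\bigr)$. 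Composing, the matrix of the candidate zero continued fraction equals $\bigl(\begin{smallmatrix}4 & 1\\3 & 1\end{smallmatrix}\bigr)N\bigl(\begin{smallmatrix}4 & -3\\-1 & 1\end{smallmatrix}\bigr)$, where $N$ is the matrix of $[e_1,\ldots,e_i-1,\ldots,e_j-1,\ldots,e_\beta]$. A short computation shows that the top-left entry of this product vanishes precisely when $N=\bigl(\begin{smallmatrix}0 & 1\\-1 & -8\end{smallmatrix}\bigr)$, which is exactly the matrix of $[0,8,0]$; this matches the blown-down form from the previous paragraph, closing the iff.

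The Markov-triple conclusion and the identifications $\frac{a}{w_a}=[e_1,\ldots,e_{i-1}]$, $\frac{b}{b-w_b}=[e_\beta,\ldots,e_{j+1}]$ then follow directly: the first from Proposition~\ref{combi1} applied to the decomposition $[\W_0^{\vee},10,\W_1^{\vee}]$, and the second from the Proposition~\ref{basico}(3) expression for each $\W_k^{\vee}$, since the entries before (resp.\ after) the middle $2$ of a dual Wahl chain are exactly $\frac{a}{w_a}$ (resp.\ $\frac{b}{b-w_b}$ read from the right). The main obstacle I anticipate is the matrix bookkeeping---carefully tracking that the top-left entry of the composed matrix vanishes in exactly the Markov case $\alpha=10$, and not accidentally in nearby structures with a different central entry that would produce a different equation in Proposition~\ref{combi1}.
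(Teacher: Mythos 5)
Your endgame (recognize $[e_1,\ldots,e_\beta]=[\Wa_0^{\vee},10,\Wa_1^{\vee}]$, then quote Propositions \ref{onepos}, \ref{combi1} and \ref{basico}) is exactly how the paper concludes, and your observation that a dual Wahl chain has a middle $2$ at which subtracting $1$ produces a zero continued fraction is correct. But the bridge you build between the two conditions of the ``if and only if'' has two genuine gaps. First, you misread the blow-down convention at the ends of a chain: a $1$ sitting at an end is blown down by deleting it and decrementing its single neighbor, so from $\{1,1,10,1,1\}$ one literally reaches $\{0,10,1,1\}$ and then $\{0,10,0\}$, with the central $10$ untouched. Your reinterpretation of ``blows down to $\{0,10,0\}$'' as ``blows down to $[0,8,0]$, with the $10$ recording the original central entry'' replaces the statement by a different one: blowing down to $\{0,8,0\}$ is not equivalent to blowing down to $\{0,10,0\}$ (for instance $\{1,1,8,1,1\}$ reaches $\{0,8,0\}$ but can never reach $\{0,10,0\}$, since blow-downs never increase entries), and the paper's example just before the proposition, which blows down to both $\{0,10,0\}$ and $\{0,8,0\}$, shows the literal reading is intended. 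An argument aimed at $[0,8,0]$ therefore does not prove the proposition as stated.

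Second, the matrix step does not do what you claim. The vanishing of the top-left entry of $\left(\begin{smallmatrix}4&1\\3&1\end{smallmatrix}\right)N\left(\begin{smallmatrix}4&-3\\-1&1\end{smallmatrix}\right)$ is the single linear condition $16n_{11}+4n_{21}-4n_{12}-n_{22}=0$ on the entries of $N$; it in no way forces $N$ to equal the particular matrix $\left(\begin{smallmatrix}0&1\\-1&-8\end{smallmatrix}\right)$. More fundamentally, ``blows down to $\{0,10,0\}$'' is a reachability statement about blow-down moves, not a statement about the value or the matrix of a continued fraction: interior blow-downs preserve the value but not the matrix, and end blow-downs preserve neither (e.g.\ $[1,3]$ blows down to $[2]$), so no identity of $2\times 2$ matrices can by itself encode that side of the equivalence. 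This is precisely where the paper works combinatorially: for ``blow-down $\Rightarrow$ zero continued fraction'' it notes that the sequence must previously reach $\{1,1,10,1,1\}$, whose framed version $\{2,2,2,2,1,10,1,2,2,2,2\}$ is a zero continued fraction, and that the framing is compatible with the intermediate blow-ups; for the converse it uses the triangulation model of Remark \ref{catalan}, where the seven consecutive $2$'s force $v_0=\cdots=2$ at seven vertices and hence a distinguished vertex with $v_k=10$ splitting the polygon into two triangulations corresponding to duals of Wahl chains. To repair your proof you would need to replace the matrix computation by an argument of this combinatorial type, and in the direction you only sketch (``splitting around the residual central entry'') you would need to justify that the surviving $10$ was never decremented and that each flank, blowing down to $\{0\}$ on its own, is a zero continued fraction before Proposition \ref{onepos} can be invoked.
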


\begin{proof}
First, we show that blowing-down to $\{0,10,0\}$ implies the zero continued fraction. 

As $\{ e_1,\ldots,e_i-1,\ldots,e_j-1,\ldots,e_{\beta} \}$ blows-down to $\{0,10,0\}$, we previously reach $\{1,1,10,1,1\}$. On the other hand, the sequence of numbers $$\{2,2,2,2,1,10, 1, 2, 2, 2, 2\}$$ blows-down to $0$, but using the previous remark, we see that then $$\{2,2,2,e_1+1,e_2,\ldots,e_{i-1},e_i-1,e_{i+1},\ldots,e_{j-1},e_j-1,e_{j+1},\ldots,e_{\beta-1},e_{\beta}+1,2,2,2\}$$ blows-down to $0$.

On the other hand, if $$[2,2,2,e_1+1,e_2,\ldots,e_{i-1},e_i-1,e_{i+1},\ldots,e_{j-1},e_j-1,e_{j+1},\ldots,e_{\beta-1},e_{\beta}+1,2,2,2]=0,$$ then, as in Remark \ref{catalan}, we have the corresponding triangulation on a convex polygon with $\beta+7$ vertices $P_i$. The number assigned to the vertex $P_0$ is $v_0=2$ (see the definition of $v_i$ in Remark \ref{catalan}). Therefore $$v_0=v_1=v_2=v_3=v_{\beta+1}=v_{\beta+2}=v_{\beta+3}=2.$$
Then, there is a vertex $P_k$ such that $v_k=8+x+y$, and there are two new polygons with vertices $P_4,\ldots,P_k$ and $P_k,\ldots,P_{\beta-4}$ such that $v_k=x$ for the first and $v_k=y$ for the second. But each of these polygons must have a $1$ at $P_k$, since for the rest of the entries we have only one $1$. Therefore $x=y=1$. Thus we have the equivalence.  

Thus we have that $\{ e_1,\ldots,e_i-1,\ldots,e_j-1,\ldots,e_{\beta} \}$ blows-down to $\{0,10,0\}$. Now we can use Proposition \ref{onepos} to conclude that $[e_1,...,e_{\beta}]=[\frac{a^2}{aw_a+1},10,\frac{b^2}{bw_b+1}]$ for some $a,b$. In this way, by Proposition \ref{combi1}, we have that $(a,b<c:=m)$ is a Markov triple, and the formulas for $\frac{a}{w_a}$ and $\frac{b}{b-w_b}$ are deduced via Proposition \ref{basico}.
\end{proof}

This brings us to the dual version due to the birational geometry viewpoint. 

\begin{proposition}
We have $$[5,x_1,\ldots,x_r,2,y_s,\ldots,y_1,5]=[\Wa_0,2,\Wa_1]$$ if and only if there is a Markov triple $(1<a,b<c=:m)$.
\label{combi3}
\end{proposition}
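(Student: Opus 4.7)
The plan is to reduce Proposition \ref{combi3} to Proposition \ref{combi1} by recognizing both sides of the claimed equality as HJ continued fraction expressions for the cyclic quotient singularity $\frac{1}{\Delta_0}(1,\Omega_0)$ from the introduction and Theorem \ref{branches-trains}.

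First I would compute the value of the left-hand side using the matrix formalism for HJ continued fractions (as in the proof of Proposition \ref{markovrc}), together with Proposition \ref{basico}(3). The inner block $[x_1,\ldots,x_r,2,y_s,\ldots,y_1]$ is the reverse of the dual Wahl chain identity, and a direct computation modulo $m^2$ shows it equals $\frac{m^2}{mq+1}$. Prepending and appending the $2\times 2$ matrix associated to $5$ then gives the closed form
\[
[5,x_1,\ldots,x_r,2,y_s,\ldots,y_1,5]\;=\;\frac{20m^2+mq-q^2-9}{4m^2+mq-1}.
\]
Comparing with the introduction's $\Delta_0=(4c+w_c)(5c-w_c)-9$ and $\Omega_0=c(4c+w_c)-1$, this is exactly $\frac{\Delta_0}{\Omega_0}$ when $c=m$ and $q=w_c$. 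One can double-check this against the Markov triple $(2,5,29)$, where $w_c=22$ forces the left-hand side to be $\frac{16965}{4001}=[5,2,2,2,8,2,2,2,2,2,2,2,5,5]$.

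For the $(\Leftarrow)$ direction, given a Markov triple $(a,b<c=m)$, I would take $q=w_m$ so that the left-hand side equals $\frac{\Delta_0}{\Omega_0}$. The extremal P-resolution geometry outlined immediately before Theorem \ref{branches-trains} (cf.\ Remark \ref{P-reswithc} for the $\delta=c$ analogue with middle $(-4)$-curve) produces Wahl chains $\Wa_0,\Wa_1$ whose concatenation $[\Wa_0,2,\Wa_1]$ realises this HJ continued fraction with $\delta=3c$ and central $(-2)$-curve. For the $(\Rightarrow)$ direction, I would write $\Wa_i=\frac{n_i^2}{n_ia_i-1}$, expand the matrix product for $[\Wa_0,2,\Wa_1]$ using the factorization from the proof of Proposition \ref{combi1}, and equate it with the expression from the previous step. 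Eliminating auxiliary variables along the lines of equations (4.1)--(4.2) in that proof should yield a Markov equation identifying $(n_0,n_1,m)$ (or a triple derived from it via the Mori train interpretation of Theorem \ref{branches-trains}) as a Markov triple, so in particular $m$ is a Markov number.

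The main obstacle is that the sequence $[5,x_1,\ldots,x_r,2,y_s,\ldots,y_1,5]$ generally contains several entries equal to $2$, so the splitting in $[\Wa_0,2,\Wa_1]$ need not sit at the obvious middle $2$: in the $(2,5,29)$ example above, the correct split occurs nine positions in, giving $\Wa_0=[5,2,2,2,8,2,2,2]$ and $\Wa_1=[2,2,2,5,5]$, whose parameters $(n_0,n_1)=(21,9)$ differ from $(a,b)=(2,5)$. To pin down the split I would combine the uniqueness portion of Proposition \ref{onepos} with the at-most-two-splits bound from \cite[Theorem 4.3]{HTU17} recalled in the Remark after Proposition \ref{onepos}. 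The residual two-fold ambiguity is precisely the wormhole phenomenon of \cite{UV22}, and checking that each split yields a valid Markov triple is the combinatorial counterpart of the partial uniqueness \cite[Proposition 3.15]{Aig13} already invoked as Theorem \ref{markitos}.
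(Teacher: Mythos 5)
Your closed-form computation of the value is correct and is a nice complement to the paper: the matrix calculation indeed gives $[5,x_1,\ldots,x_r,2,y_s,\ldots,y_1,5]=\frac{20m^2+mq-q^2-9}{4m^2+mq-1}$, which is $\frac{\Delta_0}{\Omega_0}$ when $q=w_c$, and your $(2,5,29)$ check is right. But neither direction of the equivalence is actually proved. For ($\Rightarrow$), equating the matrix of $[\Wa_0,2,\Wa_1]$ with this value only reproduces the extremal P-resolution relations $\Delta_0=n_0^2+n_1^2+\delta n_0n_1$ and $\delta=n_0n_1+n_1a_0-n_0a_1$; it does \emph{not} lead to a Markov equation in $(n_0,n_1,m)$ — as your own example shows, $(n_0,n_1,m)=(21,9,29)$ is not a Markov triple, and the true triple $(2,5,29)$ is recovered only through $n_0=3(2b-r_b)$, $n_1=3(a+r_a)$. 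Your hedge ``or a triple derived from it'' is precisely where the work lies, and an elimination in the style of (4.1)--(4.2) does not obviously supply it: the unknowns $a_0,a_1,q$ do not combine into the symmetric identity that made Proposition \ref{combi1} work for $[\Wa_0^{\vee},10,\Wa_1^{\vee}]$. The paper closes this gap by dualizing to $[2,2,2,y_1+1,\ldots,y_s+x_r,\ldots,x_1+1,2,2,2]$, invoking the criterion of \cite[\S 4.1]{HTU17} (an extremal P-resolution exists iff one can subtract $1$ at two positions of the dual and get a zero continued fraction), and then applying Proposition \ref{combi2}, whose triangulation argument locates the $10$ and splits the chain into duals of Wahl chains so that Proposition \ref{combi1} yields the Markov triple. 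Some reduction of this kind is missing from your plan.

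For ($\Leftarrow$) you appeal to the reduction stated before Theorem \ref{branches-trains} (and to Remark \ref{P-reswithc}), but in the body of the paper that reduction is Remark \ref{markovcqs}, which itself cites Proposition \ref{combi3}; using it here is circular, and Remark \ref{P-reswithc} concerns a different singularity (middle $(-4)$-curve, $\delta=c$), so it does not produce the required decomposition. The non-circular route is again via Proposition \ref{combi2}: the Markov triple gives the zero continued fraction there, dualizing gives an extremal P-resolution of $\frac{1}{\Delta_0}(1,\Omega_0)$ with two genuine Wahl singularities (no $1$ is subtracted at the end positions), and the identity $\sum_i x_i+\sum_j y_j=3r+3s-2$ together with \cite[Theorem 2.5]{UV22} forces the middle curve to be a $(-2)$-curve, i.e.\ exactly the decomposition $[\Wa_0,2,\Wa_1]$. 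Finally, your paragraph on locating the split and on wormholes addresses uniqueness (statement (VI) of the introduction), which this proposition does not require: in ($\Rightarrow$) the split is given, and in ($\Leftarrow$) you must construct one, not count them.
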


\begin{proof}

First assume $[5,x_1,\ldots,x_r,2,y_s,\ldots,y_1,5]=[\Wa_0,2,\Wa_1]$, and so it admits an extremal P-resolution (Definition \ref{extremalPres}) as in \cite[Section 4]{HTU17}. Its dual HJ continued fraction is $[2,2,2,y_1+1,y_2,\ldots,y_s+x_r,\ldots,x_2,x_1+1,2,2,2]$. By \cite[\S 4.1]{HTU17}, we obtain exactly the statement in Proposition \ref{combi2}, i.e. two indices $i<j$ such that we subtract $1$ at those positions and obtain a zero continued fraction. So, we obtain a Markov triple.

On the other hand, a Markov triple $(1<a,b<c)$ defines the first part of Proposition \ref{combi2}, and so the zero continued fraction $$[2,2,2,e_1+1,e_2,\ldots,e_{i-1},e_i-1,e_{i+1},\ldots,e_{j-1},e_j-1,e_{j+1},\ldots,e_{\beta-1},e_{\beta}+1,2,2,2]=0.$$ The dual of $$[2,2,2,e_1+1,e_2,\ldots,e_{i-1},e_i,e_{i+1},\ldots,e_{j-1},e_j,e_{j+1},\ldots,e_{\beta-1},e_{\beta}+1,2,2,2]$$ is $[5,x_1,\ldots,x_r,2,y_s,\ldots,y_1,5]$, and so it admits an extremal P-resolution by \cite[\S 4.1]{HTU17}. As we are not subtracting $1$'s in the ends, we know it has two Wahl singularities. We also have the formula $\sum_i x_i + \sum_j y_j=3r+3s-2$, and so by \cite[Theorem 2.5]{UV22} we compute that the ``middle" number in the extremal P-resolution is $2$. Therefore $$[5,x_1,\ldots,x_r,2,y_s,\ldots,y_1,5]=[\Wa_0,2,\Wa_1]$$ for some Wahl chains $\Wa_i$.   
\end{proof}

This last Proposition \ref{combi3} is strongly related to the MMP that will be developed in the subsequent sections. We recall that given a Markov triple $(a<b<c)$ and $x \in \{a,b,c\}$, we have integers $r_x$ and $w_x=3r_x-x$ (see Definition \ref{r,w}). The relevant HJ continued fraction is $$\frac{\Delta_0}{\Omega_0}=[5,x_1,\ldots,x_r,2,y_s,\ldots,y_1,5],$$ and it will have a unique extremal P-resolution (Definition \ref{extremalPres}) with $\delta=3m$, $n_0=5b-w_b=3(2b-r_b)$, $a_0=b$, $n_1=4a+w_a=3(a+r_a)$, and $a_1=3a+w_a=2a+3r_a$ (Proposition \ref{prop}). We have the formulas

\bigskip 
$\Delta_0=(4c+w_c)(5c-w_c)-9=9((c+r_c)(2c-r_c)-1)=n_0^2 + n_1^2 +3c n_0 n_1$

$3c=n_0n_1+n_1 a_0-n_0a_1$

$\Omega_0=c(4c+w_c)-1=3c(c+r_c)-1=n_1^2 a_0^2 +(n_0 a_0 -1)(n_1^2-n_1 a_1 +1)$

$\Omega_0^{-1}=c(5c-w_c)-1=3c(2c-r_c)-1$, the inverse of $\Omega_0$ modulo $\Delta_0$.

\begin{remark}
Note that $\Omega_0+\Omega_0^{-1}=9c^2-2$. In this way, we have the particular classical Dedekind sum $$ 12 s(\Omega_0,\Delta_0):= 1 + \frac{9c^2-2}{(4c+w_c)(5c-w_c)-9} = \frac{29c^2+cw_c-w_c^2-11}{20c^2+cw_c-w_c^2-9}.$$ It has a minimum at $\frac{c}{2}$, whose minimum value is $\frac{117c^2-44}{81c^2-36}$. If we think of $w_c$ as variable and $c$ fixed, \textit{Is there a characterization of the values of the Dedekind sums that admits an extremal P-resolution?} Connections between Markov triples and Dedekind sums can be found in \cite[Ch. 2 \S 8]{HiZa74}.
\label{dedekind}
\end{remark}


Note that $$\frac{\Delta_0}{\Delta_0- \Omega_0}=[2,2,2,y_1+1,y_2,\ldots,y_s+x_r,\ldots,x_2,x_1+1,2,2,2].$$ The ``circular continued fraction" has a $2$ at the zero vertex. These seven $2$s connect with a $10$ which is in the chain $[y_2,\ldots,y_s+x_r,\ldots,x_2]$, and it is uniquely located by Theorem \ref{markitos}. This $10$ splits the triangulation into two triangulations, each corresponding to duals of Wahl chains. This was used to prove Proposition \ref{combi2}.

\section{Birational geometry and Mori trains} \label{s4}

We now start with geometry. We will only encounter 2-dimensional \textit{cyclic quotient singularities} (c.q.s.), and the most relevant will be \textit{Wahl singularities}. We recall that a c.q.s. $\frac{1}{m}(1,q)$ is the surface germ at $(0,0)$ of the quotient of $\C^2$ by $(x,y) \mapsto (\zeta x, \zeta^q y)$, where $\zeta$ is an $m$-th primitive root of $1$, and $0<q<m$ are coprime integers. A Wahl singularity is a c.q.s. $\frac{1}{n^2}(1,na-1)$, where $0<a<n$ are coprime integers. We include smooth points setting $n=1$.

A singularity $\frac{1}{m}(1,q)$ can be minimally resolved by a chain of nonsingular rational curves $E_1,\ldots,E_r$ where $E_i^2=-e_i \leq -2$ and $\frac{m}{q}=[e_1,\ldots,e_r]$. This last part is the direct connection to the previous sections. These singularities do not have parameters involved, so c.q.s. are the same as the HJ continued fractions of rational numbers greater than $1$. The symbol $[e_1,\ldots,e_r]$ will also refer to these chains of curves. Wahl singularities are minimally resolved by Wahl chains.

To operate with birational geometry ``on chains with singularities", we will need the following definition.

\begin{figure}[htbp]
\centering
\includegraphics[width=12cm]{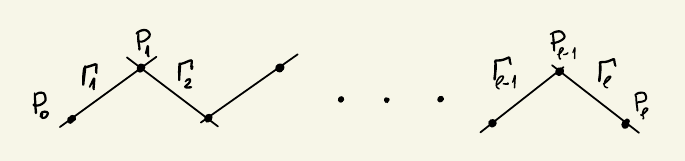}
\caption{Chain of Wahl singularities} 
\label{f1}
\end{figure}

\begin{definition}
A \textit{chain of Wahl singularities} is a collection of nonsingular rational curves $\Gamma_1,\ldots,\Gamma_{\ell}$ and a collection of Wahl singularities $P_0,\ldots,P_{\ell}$ in a surface $W$ such that $P_i, P_{i+1}$ belong to $\Gamma_{i+1}$, and $\Gamma_i, \Gamma_{i+1}$ form a toric boundary at $P_i$ for all $i$. In the case of $i=0$ or $i=\ell$, we have only one part of the toric boundary. The notation is $P_i=\frac{1}{n_i^2}(1,n_i a_i -1)$, where the minimal resolution goes from $\Gamma_i$ to $\Gamma_{i+1}$. In the minimal resolution of all singularities, the proper transforms of $\Gamma_i$ have self-intersection $-c_i$. This situation in $W$ will be denoted by $$\left[{n_0 \choose a_0}\right]-(c_1)-\left[{n_1 \choose a_1}\right]-(c_2)-\ldots -(c_{\ell})- \left[{n_{\ell} \choose a_{\ell}}\right].$$ When $P_i$ is smooth (i.e. $n_i=1$), then we write just $\ldots-(c_{i})-(c_{i+1})-\ldots$.
\label{chainWahlsing}
\end{definition}

\begin{example}
Let $(1<a<b<c)$ be a Markov triple. Consider the weighted projective plane $\P(a^2,b^2,c^2)$. It has Wahl singularities $P_0=\frac{1}{a^2}(1,a w_a-1)$, $P_1=\frac{1}{c^2}(1,c w_c-1)$, and $P_2=\frac{1}{b^2}(1,b w_b-1)$. The toric boundary of $\P(a^2,b^2,c^2)$ is given by nonsingular rational curves $\Gamma_1,\Gamma_2,\Gamma_3$. Say that $P_0,P_1 \in \Gamma_1$ and $P_1,P_2 \in \Gamma_2$. Then we have the chain of Wahl singularities $$\left[{a \choose w_a}\right]-(1)-\left[{c \choose w_c}\right]-(1)- \left[{b \choose w_b}\right].$$ In the case of $a=1<b$, we obtain $(0)-\left[{c \choose w_c}\right]-(1)- \left[{b \choose w_b}\right]$. For $a=b=1<2=c$ we have $(0)-\left[{2 \choose 1}\right]-(0)$.    
\label{manetti}
\end{example}

\begin{definition}
A \textit{W-surface} is a normal projective surface $W$ together with a proper deformation $(W \subset \W) \to (0 \in \D)$ such that
\begin{enumerate}
\item $W$ has at most Wahl singularities.
\item $\W$ is a normal complex $3$-fold with $K_{\W}$ $\Q$-Cartier.
\item The fiber $W_0$ is reduced and isomorphic to $W$.
\item The fiber $W_t$ is nonsingular for $t\neq 0$.
\end{enumerate}
We denote this by $W_t \rightsquigarrow W_0:=W$. 
\label{wsurf}
\end{definition}

We recall that here $\D$ is an arbitrarily small disk, the germ of a nonsingular point on a curve. For a  given W-surface, locally at each of the singularities of $W$ we have what is called a \textit{$\Q$-Gorenstein smoothing}. The invariants $q(W_t):=h^1(\O_{W_t})$, $p_g(W_t):=h^2(\O_{W_t})$, $K_{W_t}^2$, $\chi_{\text{top}}(W_t)$ (topological Euler characteristic) remain constant for every $t \in \D$ (see e.g. \cite[\S 1]{Ma91}). The fundamental group of $W_0$ and $W_t$ may differ. For example $W_t$ could be an Enriques surface and so $\pi_1(W_t)=\Z/2$, and $W_0$ a rational surface and so simply-connected (see e.g. \cite[\S 4]{Urz16b}).

A W-surface is \textit{minimal} if $K_W$ is nef, and so $K_{W_t}$ is nef for all $t$ \cite{Urz16a}. If a W-surface is not minimal, then we can run explicitly the MMP for $K_{\W}$ relative to $\D$, which is fully worked out in \cite{HTU17}. See \cite[\S 2]{Urz16a} for a summary of the results in \cite{HTU17}, and \cite[\S 2]{Urz16b} for details of how MMP is run. (MMP is closed for W-surfaces as shown in the proof of \cite[Theorem 5.3]{HTU17}.) It arrives at either a minimal model or a nonsingular deformation of ruled surfaces or a degeneration of $\P^2$ with only quotient singularities (see \cite[Section 2]{Urz16a}). This last outcome is very relevant in what follows, and it will be described in the next section. When $K_W$ is nef and big, the canonical model of $(W \subset \W) \to (0 \in \D)$ has only T-singularities (i.e. ADE singularities or c.q.s. of type $\frac{1}{dn^2}(1,dna-1)$ with $0<a<n$, gcd$(a,n)=1$, and $d\geq 1$). (C.f. \cite[Section 2]{Urz16a} and \cite[Sections 2 and 3]{Urz16b}.)

\begin{example}
Any $\P(a^2,b^2,c^2)$ can be the central fiber of a W-surface. In that case, the general fiber must be $\P^2$ as $K^2=9$ and $-K$ is ample. The reason is that there are no local-to-global obstructions for global deformations. In fact, in \cite[Proposition 3.1]{HP10}, it is proved that this is the case whenever $-K_W$ is big. Any partial $\Q$-Gorenstein smoothing of $\P(a^2,b^2,c^2)$ will have no local-to-global obstructions for global deformations by the same reason.  
\label{noobstruction}
\end{example}

When $K_W$ is nef, then all fibers are minimal models. If not, there exists a smooth rational curve $\Gamma$ such that $\Gamma \cdot K_W <0$. We have three possibilities: $\Gamma^2<0$, or there is no $\Gamma^2<0$, and so we have $\Gamma^2=0$ (deformations of ruled surfaces), or $\Gamma^2 >0$ (degenerations of $\P^2$). In the first case, the curve $(\Gamma \subset \W)$ is contractible to $(P \in \bbW)$ over $\D$, and this is a $3$-fold \textit{extremal neighborhood} (nbhd) of type \eni~(one singularity) or \enii~(two singularities). In birational geometry of 3-folds, the general definition of extremal neighborhoods of types \eni/\enii ~is here \cite[Section 1, C.4]{kollar1992classification}. It turns out that for our purposes we only need the ``minimal ones" as shown in \cite[Proposition 2.1]{HTU17}. Explicit computations can be found in \cite[\S 2.2 and \S 3.1]{HTU17}. 

In our specific situation and at the level of fibers, we have at each birational transformation of the MMP a contraction on special fibers $\Gamma \subset W \to P\in \bW$, where $P$ is some c.q.s. $\frac{1}{\Delta}(1,\Omega)$. All will be explained below.

\vspace{0.3cm}

\underline{($W \to \bW$ for \eni)}: Fix an \eni ~with Wahl singularity $\frac{1}{n^2}(1,na-1)$. Let $\frac{n^2}{na-1}=[e_1,\ldots,e_r]$ be its continued fraction. Let $E_1,\ldots,E_r$ be the exceptional curves of the minimal resolution $\widetilde{W}$ of $W$ with $E_j^2=-e_j$ for all $j$. Notice that $K_{W} \cdot \Gamma <0$ and $\Gamma \cdot \Gamma <0$ imply that the
strict transform of $\Gamma$ in $\widetilde{W}$ is a $(-1)$-curve intersecting only one curve $E_i$ transversally at one point. These data will be written as $$[e_1,\ldots,\overline{e_i},\ldots,e_r]$$ so that $\frac{\Delta}{\Omega} = [e_1,\ldots,e_i-1,\dots,e_r]$ where $(P \in \bW)$ is $\frac{1}{\Delta}(1,\Omega)$. 
Define $\delta:= -n K_{W} \cdot \Gamma >0$. We have $\Gamma \cdot \Gamma= -\frac{ \Delta}{n^2}<0$.

\vspace{0.4cm}

\underline{($W \to \bW$ for \enii)}: Consider now an \enii ~with Wahl singularities $$\frac{1}{n_0^2}(1,n_0 a_0 -1), \frac{1}{n_1^2}(1,n_1 a_1 -1).$$ Let $E_1,\ldots,E_{r_0}$ and $F_1,\ldots,F_{r_1}$ be the exceptional divisors over $\frac{1}{n_0^2}(1,n_0 a_0 -1)$ and $\frac{1}{n_1^2}(1,n_1 a_1-1)$ respectively, such that $\frac{n_0^2}{n_0 a_0-1}=[e_1,\ldots,e_{r_0}]$ and $\frac{n_1^2}{n_1 a_1 -1}=[f_1,\ldots,f_{r_1}]$ with $E_i^2=-e_i$ and $F_j^2=-f_j$. We know that the strict transform of $\Gamma$ in the minimal resolution $\widetilde{W}$ of $W$ is a $(-1)$-curve intersecting only $E_{r_0}$ and $F_1$ transversally at one point each. (It follows e.g. from the proof of \cite[Theorem 10.6]{kawamata1988crepant} at p.154.) The data for \enii ~will be written as $$[e_{1},\ldots,e_{r_0}]-[f_1,\ldots,f_{r_1}],$$ and $$\frac{\Delta}{\Omega} = [e_{1},\ldots,e_{r_0},1,f_1,\ldots,f_{r_1}]$$ where $(P \in \bW)$ is
$\frac{1}{\Delta}(1,\Omega)$.

We define $\delta:= n_0 a_1- n_1 a_0$, and so $$\Delta= n_0^2 + n_1^2 - \delta n_0 n_1, \ \ \ \Omega= (n_0-\delta n_1)a_0+n_1 a_1 -1.$$ We have
$K_{W} \cdot \Gamma=- \frac{\delta}{n_0 n_1 } <0$ and $\Gamma \cdot \Gamma= -\frac{\Delta}{n_0^2 n_1^2} <0$, see \cite[Proposition 2.6]{M02}.

\vspace{0.4cm}

\underline{($W^+ \to \bW$)}: In analogy to an \enii, an extremal P-resolution has data (see e.g. \cite[Lemma 3.14]{KSB88}, \cite[\S 4]{HTU17}) $$[e_{1},\ldots,e_{r_0}]-c-[f_1,\ldots,f_{r_1}],$$  So that
$$\frac{\Delta}{\Omega}=[e_{1},\ldots,e_{r_0},c,f_1,\ldots,f_{r_1}]$$ where $-c$ is the self-intersection of the strict transform of $\Gamma^+$ in the
minimal resolution of $W^+$, and $(P \in \bW)$ is $\frac{1}{\Delta}(1,\Omega)$. As for an \enii~, here $\frac{{n'}_0^2}{{n'}_0
{a'}_0-1}=[e_1,\ldots,e_{r_0}]$ and $\frac{{n'}_1^2}{{n'}_1 {a'}_1-1}=[f_1,\ldots,f_{r_1}]$. If a Wahl singularity (or both) is (are) actually smooth, then we set ${n'}_0=1$, ${a'}_0=0$ and/or ${n'}_1={a'}_1=1$.

We define $$\delta= (c-1){n'}_0 {n'}_1 + {n'}_1 {a'}_0 - {n'}_0 {a'}_1,$$ and so $\Delta= {n'}_0^2 + {n'}_1^2 + \delta {n'}_0 {n'}_1$ and, when both
${n'}_i \neq 1$, $$\Omega = -{n'}_1^2 (c-1) + ({n'}_0+\delta {n'}_1){a'}_0+{n'}_1 {a'}_1 -1.$$ (One easily computes $\Omega$ when one or both ${n'}_i=1$.) We have $$K_{W^+} \cdot \Gamma^+=\frac{\delta}{{n'}_0 {n'}_1 } >0 \ \ \ \text{and} \ \ \ \Gamma^+ \cdot \Gamma^+= -\frac{\Delta}{{n'}_0^2 {n'}_1^2} <0.$$


\textit{When do we have a divisorial contraction or a flip?} The criterion for any \eni \ or \ \enii \ extremal nbhd uses the Mori recursion \cite{M02} (see \cite{Urz16a} for more details). We check this in 2 steps:

\begin{itemize}
    \item If we have a \eni \ extremal nbhd with a Wahl singularity $Q$, then there is at least one (typically there are two) \enii \ extremal nbhd which has $Q$ as one of its Wahl singularities, and they are over the same c.q.s. The \eni \ and the \enii \ are both divisorial contractions or flip \cite[\S 2.3, \S 3.4]{HTU17}. So it is enough to check it for a \enii.
    
    \item Let $(\Gamma \subset W \subset \W) \to (P \in \bW \subset \bbW)$ be a \enii \ extremal nbhd. Let $\frac{1}{n_0^2}(1,n_0 a_0 -1)$ and $\frac{1}{n_1^2}(1,n_1 a_1 -1)$ be the Wahl singularities of $W$. If $\delta=1$, then this \enii \ is of flipping type. If $\delta>1$, then we consider the Mori recursion (see \cite[\S3.3]{HTU17}) $$ n(0)=n_0, \ \ \ n(1)=n_1, \ \ \ n(i-1)+n(i+1)=\delta n(i)$$ for any $i \in \Z$. If there is $i$ such that $n(i)=0$, then we have a divisorial contraction. Otherwise, it is a flip.
\end{itemize}

Right before some $n(i)$ becomes nonpositive in the Mori recursion \cite{M02}, we obtain an initial \enii \ over the same c.q.s., $\delta$, and flipping or divisorial type as the \enii \ we started with. In \cite{Urz16a} this is called the initial \enii. Let $\frac{1}{n_0^2}(1,n_0 a_0 -1)$ and $\frac{1}{n_1^2}(1,n_1 a_1 -1)$ be the Wahl singularities corresponding to that initial \enii. Assume $\delta n_1-n_0 \leq 0$. (We note that one of them could be a smooth point.) Say that the c.q.s is $\frac{1}{\Delta}(1,\Omega)$, and we have the contraction $$ \left[{n_0 \choose a_0}\right]-(1)-\left[{n_1 \choose a_1}\right] \to \frac{1}{\Delta}(1,\Omega),$$ where the left-hand side is a chain of Wahl singularities. 

For $i \geq 1$, we have the Mori recursions $$ n(0)=n_1, \ \ \ n(1)=n_0, \ \ \ n(i-1)+n(i+1)=\delta n(i)$$ and $a(0)= a_1$, $a(1)=a_0$, $a(i-1) + a(i+1)=\delta a(i)$. When $\delta >1$, for each $i \geq 1$ we have an \enii \ with Wahl singularities defined by the pairs $$(n(i+1),a(i+1)), (n(i),a(i)).$$  We have $n(i+1) > n(i)$. The numbers $\delta$, $\Delta$, and $\Omega$, and the flipping or divisorial type are equal to the ones associated to the initial \enii. We call this sequence of \enii's a \textit{Mori sequence}. If $\delta=1$, then the initial \enii \ is flipping, and the Mori sequence above gives only one more \enii ~with data $n(2)=n_0-n_1, a(2)=a_0-a_1$ and $n(1)=n_0, a(1)=a_0$. 

We now explain the effect of doing either a divisorial contraction or a flip on an arbitrary \eni \ or \enii $$(\Gamma \subset W \subset \W) \to (P \in \bW \subset \bbW),$$ all over the same $\D$. 

\bigskip 
\textbf{(DC):} Assume it is a divisorial contraction. Then $\delta \geq 2$, $\Delta=\delta^2$ and $\Omega=\delta a-1$, for some $a$ coprime to $\delta$. This is, the c.q.s. $P \in \bW$ is a Wahl singularity. The initial \enii \ and the contraction is $$ \left[{\delta^2 \choose \delta a-1}\right]-(1)-\left[{\delta  \choose a}\right] \to \frac{1}{\delta^2}(1,\delta a-1).$$ On the general fibers $W_t \to \bW_t$ we have the contraction of a $(-1)$-curve. In particular, after this divisorial contraction, we have a W-surface $(\bW \subset \bbW) \to (0 \in \D)$. 

\bigskip 
\textbf{(F):} Assume it is a flip. Then its flip is an extremal nbhd $$(\Gamma^+ \subset W^+ \subset \W^+) \to (P \in \bW \subset \bbW),$$ again over $\D$, such that $(\Gamma^+ \subset W^+) \to (P \in \bW)$ is an extremal P-resolution. The deformation $(\bW^+ \subset \bbW^+) \to (0 \in \D)$ is a W-surface again. Between general fibers $W_t$ and $W_t^+$ we have isomorphisms. If we write the extremal P-resolution as a contractible configuration of Wahl singularities $$\left[{n'_0 \choose a'_0}\right]-(c)-\left[{n'_1  \choose a'_1}\right] \to \frac{1}{\Delta}(1,\Omega),$$ then $n'_0=n_1$, $a'_0=a_1$, $n'_1 =n_0-\delta n_1$, and $a'_1=a_0-\delta a_1$ modulo $n'_1$ (We recall that we are assuming $n_0 \geq \delta n_1 \geq n_1$.) To compute $c$ we use the formula of $\delta$ in an extremal P-resolution (see above).

\begin{remark}
As discussed in the introduction and later in Section \ref{s5}, the MMP process we established on $\mathbb{F}_1 \rightsquigarrow \text{Bl}_{\text{pt}}(W)$, where $W=\P(a^2,b^2,c^2)$, consists solely of flips. Nevertheless, we described divisorial contractions to familiarize the reader with the MMP of $W$-surfaces in a broader context.
\end{remark}

\begin{remark}
An interesting question is: \textit{When does a c.q.s. $\frac{1}{\Delta}(1,\Omega)$ admit an extremal P-resolution?} For example, this question is directly related to Markov conjecture (see Proposition \ref{combi3}). In \cite[Section 4]{HTU17}, we show a complete answer in terms of zero continued fractions. Let us consider the dual HJ continued fraction $$\frac{\Delta}{\Delta-\Omega} = [b_1,\ldots,b_s].$$ Then $\frac{1}{\Delta}(1,\Omega)$ admits it if and only if there are $i<j$ such that $$[b_1,\ldots, b_i-1, \ldots, b_j-1, \ldots,b_s]=0.$$ One can read precisely the extremal P-resolution. \textit{How many extremal P-resolutions can a c.q.s. admit?} At most two, and $\delta$'s are the same. This is \cite[Theorems 4.3 and 4.4]{HTU17}. When a c.q.s. admits two extremal P-resolutions we call it \textit{wormhole}. The reason is here \cite{UV22}. There are various open questions on wormhole singularities, their classification is not known. 
\end{remark}

\begin{remark}
As was said in Remark \ref{catalan}, the zero continued fractions of length $s$ are in one-to-one correspondence with triangulations of convex polygons with $s+1$ sides. Hence, given $[b_1,\ldots, b_i-1, \ldots, b_j-1, \ldots,b_s]=0$ as in the previous remark, we have the number of triangles for a given vertex $v_k=b_k$ if $k \neq i,j,0$, $v_i=b_i-1$, and $v_j=b_j-1$. Thus, $v_0=3s-1 - \sum_{k=1}^s b_k$. If this number $v_0$ is equal to $1$, then we can erase the triangle with that vertex, and obtain a new zero continued fraction for a new HJ continued fraction where we subtract $1$ in two positions. After repeating this some number of times, we obtain a vertex $P_0$ with $v_0 \neq 1$. 

In this way, one can think of HJ continued fractions with two $i<j$ positions where we subtract $1$ to obtain a zero continued fraction as constructed by a particular one after adding triangles at the $0$ vertex many times. All of these new continued fractions have the same $\delta$.

Let us take an example, which will be important when describing the birational geometry of Markov triples. Let $0<q<m$ be coprime integers. Consider the Hirzebruch-Jung continued fractions $$\frac{m}{q}=[x_1,\ldots,x_r] \ \ \text{and} \ \ \frac{m}{m-q}=[y_1,\ldots,y_s].$$ Let us define the c.q.s. $\frac{1}{\Delta}(1,\Omega)$ via its dual HJ continued fraction $\frac{\Delta}{\Delta-\Omega}=$ $$[x_1,\ldots,x_r,2,y_s,\ldots,y_1+x_1,\ldots,x_r+y_s,\ldots,y_1+1,\underbrace{2,\ldots,2}_6,x_1+1,\ldots,x_r+y_s,\ldots,y_1].$$ Assume it has $i<j$ positions such that we subtract $1$ in both and we obtain a zero continued fraction. We note that $\sum_{i=1}^r x_i + \sum_{j=1}^s y_j=3r+3s-2$, and so it is easy to verify $v_0=1$ for the corresponding polygon. We now erase that triangle at the zero vertex, and keep going until the corresponding $v_0 \neq 1$. One can verify that the part that survives is precisely the underlined below $$[x_1,\ldots,x_r,2,y_s,\ldots,\underline{y_1+x_1,\ldots,y_1+1,\overbrace{2,\ldots,2}^6},x_1+1,\ldots,x_r+y_s,\ldots,y_1].$$ The new continued fraction that has $i<j$ to subtract to get a zero continued fraction is $$[x_1+1,\ldots,x_r+y_s,\ldots,y_1+1,2,2,2,2,2,2],$$ and it vertex at $0$ has $v_0=2$. Therefore we can modify it to $$\frac{\Delta_0}{\Delta_0 - \Omega_0} =[2,2,2,x_1+1,\ldots,x_r+y_s,\ldots,y_1+1,2,2,2].$$ One can show that $\delta=3m$, and the c.q.s. has HJ continued fraction $$\frac{\Delta_0}{\Omega_0} =[5,x_1,\ldots,x_r,2,y_s,\ldots,y_1,5].$$ We can name it as Markov's c.q.s. because by Proposition \ref{combi3} $m$ is a Markov number in a Markov triple $(a,b<c:=m)$. We recall that Markov conjecture says for a fixed $m$ there is only one $q$ such that $\frac{1}{\Delta_0}(1,\Omega_0)$ admits an extremal P-resolution. For any such case, the strict transform of $\Gamma^+$ in the minimal resolution of $W^+$ is a $(-2)$-curve, and $\delta=3m$.
\label{markovcqs}
\end{remark}

\begin{example} The following are examples of $\frac{\Delta_0}{\Omega_0}=[\bold{5},x_1,\ldots,x_p,\bold{2},y_q,\ldots,y_1,\bold{5}]$, where the bar below a $2$ will indicate the curve $\Gamma^+$ in the extremal P-resolution. We also show the dual continued fraction $\frac{\Delta_0}{\Delta_0 - \Omega_0}$, indicating the two places where we subtract to obtain a zero continued fraction. 

\vspace{0.1cm} 

\begin{itemize}
\item $c=29$, $w_c=22$: $[\bold{5},2,2,2,8,\bold{2},2,2,\underline{2},2,2,2,5,\bold{5}]$

    $[{21 \choose 5}]-2-[{ 9 \choose 7}]$ where $\frac{3c}{70}=[2,2,2,2,10,2]$

    dual fraction $[2,2,2,6,\bar{2},2,2,2,2,10,2,\bar{2},3,2,2,2]$
\vspace{0.3cm}   

\item $c=169$, $w_c=128$: $[\bold{5},2,2,2,10,2,2,2,2,\bold{2},6,2,2,2,\underline{2},2,2,2,5,\bold{5}]$

    $[{ 123 \choose 29 }]-2-[{ 9 \choose 7 }]$ where $\frac{3c}{70}=[8,2,2,2,10,2]$

dual fraction $[2,2,2,6,2,2,2,2,2,2,\bar{2},8,2,2,2,10,2,\bar{2},3,2,2,2]$

  \vspace{0.3cm}   

\item $c=194$, $w_c=163$: $[\bold{5},2,2,2,2,2,5,8,\bold{2},2,2,\underline{2},2,2,2,3,2,2,7,\bold{5}]$

    $[{ 54 \choose 13}]-2-[{24  \choose 19}]$ where $\frac{3c}{269}=[3,2,2,2,2,2,10,5]$

dual fraction $[2,2,2,8,2,\bar{2},3,2,2,2,2,2,10,5,\bar{2},2,2,2,3,2,2,2]$

    \vspace{0.3cm} 
    
\item $c=433$, $w_c=104$: $[\bold{5},5,2,2,2,2,2,10,2,\bold{2},3,2,2,2,\underline{2},2,2,2,8,2,2,2,\bold{5}]$ 

$[{138 \choose 29}]-2-[{ 21 \choose 16}]$ where $\frac{3c}{1120}=[2,2,2,2,2,2,5,10,2,2,2,2]$

dual fraction $[2,2,2,3,2,2,8,\bar{2},2,2,2,2,2,2,5,10,2,2,2,2,\bar{2},6,2,2,2]$

 \vspace{0.3cm} 
 
\item $c=985$, $w_c=746$: $[\bold{5},2,2,2,10,2,2,2,8,\bold{2},2,2,2,2,2,2,6,2,2,2,\underline{2},2,2,2,5,\bold{5}]$

$[{ 717 \choose 169 }]-2-[{ 9 \choose 7}]$ where $\frac{3c}{2378}=[2,2,2,2,10,2,2,2,10,2]$

dual fraction $[2,2,2,6,2,2,2,2,2,2,2,6,\bar{2},2,2,2,2,10,2,2,2,10,2,\bar{2},3,2,2,2]$

\end{itemize}
\end{example}

\begin{theorem} \cite[Theorem 1.1]{HTU17}
Let us consider any \eni \ and \enii \ $(\Gamma \subset W \subset \W) \to (P \in \bW \subset \bbW)$, and so the W-surface $(W \subset \W) \to 0 \in \D$. Then there is a universal irreducible family of surfaces such that $\W \to \bbW$ is a pull-back of it. In the case of a flip  $(\Gamma^+ \subset W^+ \subset \W^+) \to (P \in \bW \subset \bbW)$, there is also a universal irreducible family such that $\W^+ \to \bbW$ is a pull-back of that family. These families are explicitly described using toric geometry, and their two-dimensional basis depends only on $\delta$.   
\label{universal} 
\end{theorem}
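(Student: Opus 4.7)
The plan is to construct the universal family explicitly via toric geometry, building on the Koll\'ar--Mori classification of three--fold extremal neighborhoods of types \eni{} and \enii{}. The key observation is that such a neighborhood is determined up to isomorphism by the integer $\delta$ together with a discrete choice of ``position'' in the Mori recursion $n(i-1)+n(i+1)=\delta n(i)$; accordingly, one expects a two--dimensional toric base $\mathcal{B}_\delta$ depending only on $\delta$, with torus--invariant boundary strata parametrizing the different positions.

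First I would set up the local toric model. The germ $(P\in\bW)$ is a cyclic quotient singularity, hence toric, and the partial resolution $\Gamma\subset W\to\bW$ is obtained by adding one extra ray to the fan so that $\Gamma$ has the correct self--intersection. Since the $\Q$--Gorenstein smoothing parameter $t$ at each Wahl point is unobstructed and one--dimensional, the total space $\W\to\D$ admits an analytic toric description in which $t$ appears as an extra coordinate of a three--dimensional fan; the \eni{} case is the degeneration of \enii{} with one $n_i=1$, so both fit the same framework. I would then assemble the universal toric three--fold $\mathcal{W}\to\mathcal{B}_\delta$ by concatenating, for every $i$, the cone encoding the \enii{} with Wahl data $(n(i),a(i)),(n(i+1),a(i+1))$, so that the whole Mori sequence is realized simultaneously over the torus--invariant boundary of $\mathcal{B}_\delta$. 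Any specific W--surface of the given $\delta$ then arises as the pullback of $\mathcal{W}$ along a disk $\D\to\mathcal{B}_\delta$ transverse to the boundary point that encodes its Wahl singularities.

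For the flipping case I would construct $\mathcal{W}^+\to\mathcal{B}_\delta$ by performing a toric flip of $\mathcal{W}$ over $\mathcal{B}_\delta$: this replaces one triangulation of the ambient fan by the other, geometrically exchanging the contractible curve $\Gamma$ for the extremal P--resolution curve $\Gamma^+$, and the data $(n'_0,a'_0),(n'_1,a'_1)$ appearing in the $(F)$ discussion can be read off the dual triangulation. Pulling back $\mathcal{W}^+$ along the same disk recovers $\W^+\to\D$, and the two universal families $\mathcal{W}$ and $\mathcal{W}^+$ together realize the flip $\W\dashrightarrow\W^+$ over $\bbW$ as a pullback of a universal toric flip over $\mathcal{B}_\delta$.

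The main obstacle is proving \emph{universality}: that every extremal neighborhood with the given $\delta$ really is a pullback, not merely formally equivalent to one. This reduces to showing that the classifying map from $\mathcal{B}_\delta$ into the versal $\Q$--Gorenstein deformation space of the central fiber is an isomorphism of germs at each relevant point. Using that $T^1_{QG}$ is one--dimensional at each Wahl singularity, that there are at most two Wahl points on the central fiber, and that the configuration around $\Gamma$ is toric--rigid, one verifies that the versal base has dimension $2$ and matches $\mathcal{B}_\delta$ locally; the ``irreducibility'' in the statement then follows from the connectedness of the fan of $\mathcal{B}_\delta$.
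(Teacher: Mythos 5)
You should first be aware that the paper contains no proof of Theorem \ref{universal}: it is quoted from \cite[Theorem 1.1]{HTU17}, so your proposal has to be measured against the proof there, which is an explicit construction resting on Mori's and Koll\'ar--Mori's structure theory of semistable (k1A/k2A) extremal neighborhoods, with toric geometry used to describe the resulting family and its degenerate fibers --- not on the assertion that the neighborhoods themselves are toric. This points to the first genuine gap in your sketch: your opening premise, that such a neighborhood ``is determined up to isomorphism by $\delta$ together with a discrete choice of position in the Mori recursion,'' is false; if it were true there would be nothing to parametrize and no need for a two-dimensional base. What is true (and already requires the classification as input, rather than being available for free) is that the central fiber $(\Gamma\subset W)$ is determined by the numerical data, while the three-fold germ moves in a positive-dimensional family. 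Your toric setup inherits this problem: at a Wahl point the germ of $\W$ is indeed the toric singularity $\frac{1}{n}(1,-1,a)$, but the projection to $\D$ is induced by the non-invariant function $xy-z^n$, the curve $\Gamma$ is not an orbit closure of any global torus action, and a k1A or k2A neighborhood is not a toric three-fold; hence ``concatenating the cones of the Mori sequence'' does not manufacture the universal family (note also the dimension slip: a family of surface germs over a two-dimensional base is four-dimensional, not a ``universal toric three-fold'').

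The step you label the main obstacle --- universality --- is exactly the crux and remains asserted rather than proved. To run your argument one would need: a proof that the $\Q$-Gorenstein deformation functor of the non-proper germ $(\Gamma\subset W)$ is unobstructed with smooth two-dimensional base, which requires vanishing statements for the germ along $\Gamma$ and not merely that $T^1_{QG}$ is one-dimensional at each Wahl point; a verification that the Kodaira--Spencer (classifying) map of the family you build is an isomorphism, not just a dimension count; and a separate treatment of the k1A case, where there is only one Wahl singularity, so your ``at most two Wahl points, each contributing one'' count does not even produce the required two parameters --- the second parameter has a different origin there. Finally, ``irreducibility follows from connectedness of the fan'' is a non sequitur: irreducibility of the family is precisely the nontrivial statement that welds all wagons of a Mori train into a single family (cf.\ Remark \ref{defoverP1}), and it needs its own argument. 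As it stands, the proposal reproduces the statement's expected shape but not the substance of the construction or of the universality proof.
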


The key numerical data that controls the universal family is the infinite HJ continued fraction $$\frac{\delta+ \sqrt{\delta^2-4}}{2}=\delta  - \frac{1}{\delta - \frac{1}{\ddots}}.$$ Depending on the birational type of the extremal nbhd, we encode the numerical data for each \eni \ and \ \enii \ as follows.

\begin{definition}
Let $(P \in \bW)=\frac{1}{\Delta}(1,\Omega)$ be a cyclic quotient singularity which is part of some extremal neighborhood with $\delta>1$. A \textit{Mori train} is the combinatorial data to construct all \eni \ and \ \enii \ over $\frac{1}{\Delta}(1,\Omega)$ of divisorial or flip type. We explain each case separately.
\noindent 

\begin{itemize}
\item[(DC):] Fix the Wahl singularity $\frac{1}{\Delta}(1,\Omega)=\frac{1}{\delta^2}(1,\delta a-1)$. Then its (unique) \textit{Mori train} is the concatenated data of the Wahl chains involved in all \eni \ and \enii \ of divisorial contraction type over $\frac{1}{\delta^2}(1,\delta a-1)$. The \textit{first wagon} corresponds to the Wahl chain $[e_1,\ldots,e_r]$ of $\frac{1}{\delta^2}(1,\delta a-1)$, the next wagons correspond to the Wahl chain in the k1A (and so they have one bar somewhere), and two consecutive wagons correspond to the Wahl chains in the k2A: 
$$[e_1,\ldots,e_r]-[e_{1,1},\ldots,e_{r_1,1}]-[e_{1,2},\ldots,e_{r_2,2}]-\ldots $$

\item[(F):] Fix an extremal P-resolution of $\frac{1}{\Delta}(1,\Omega)$. Its (at most two) \text{Mori trains} are the concatenate data of the Wahl chains involved in all \eni \ and \enii \ of flipping type over $\frac{1}{\Delta}(1,\Omega)$. The \textit{first wagon} corresponds to one of the Wahl chains in the extremal P-resolution, and as before, the next wagons correspond to the Wahl chain in a k1A, and two consecutive wagons correspond to the Wahl chains in a k2A. We put an empty wagon $[]$ if the Wahl singularity is a smooth point. When Wahl singularities in the extremal P-resolution are equal we have only one Mori train.    
\end{itemize}
\label{moritrain}
\end{definition}

Instead of trying general formulas for each wagon of the Mori trains, we give some examples.

\begin{example}
(Divisorial family) Consider the Wahl singularity $(P \in \bW)=\frac{1}{4}(1,1)$. Then $\delta=2$ and the Mori train is $$[4]-[2,\bar{2},6]-[2,2,2,\bar{2},8]-[2,2,2,2,2,\bar{2},10]-\cdots $$ For example, the initial \enii \ is $[4]-[2,2,6]$, the $[2,2,2,2,2,\bar{2},10]$ is an \eni~, and $[2,2,6]-[2,2,2,2,8]$ is another \enii.
\label{exdivfam}
\end{example}

\begin{example} (Flipping family) Let $\frac{1}{11}(1,3)$ be the c.q.s. $(P \in \bW)$. So $\Delta=11$ and $\Omega=3$. Consider the extremal P-resolution $W^+ \to \bW$ defined by $[4]-3-[]$. Here $\delta=3$, and $\Gamma^+$ is a $(-3)$-curve (after minimally resolving). Then the numerical data of any \eni ~and any \enii ~associated with $W^+$ can be read from the Mori trains
$$[]-[\bar{2},5,3]-[2,3,\bar{2},2,7,3]-[2,3,2,2,2,\bar{2},5,7,3]-\cdots
$$ and
$$[4]-[2,\bar{2},5,4]-[2,2,3,\bar{2},2,7,4]-[2,2,3,2,2,2,\bar{2},5,7,4]-\cdots$$ The initial \enii \ are $[]-[\bar{2},5,3]$ and $[4]-[2,\bar{2},5,4]$, corresponding to the smooth point and the Wahl singularity $\frac{1}{4}(1,1)$ in the extremal P-resolution. For particular examples, we have that $[2,3,\bar{2},2,7,3]$ and $[2,\bar{2},5,4]$ are \eni~ whose flips have $W^+$ as central fiber. Or $[2,3,2,2,7,3]-[2,3,2,2,2,2,5,7,3]$ is a \enii \ over $\frac{1}{11}(1,3)$.  
\label{exantiflipfam}
\end{example}

\begin{remark} \cite[\S2.3 and \S3.4]{HTU17}
Via the construction of the universal family in \cite[\S2.3 and \S3.4]{HTU17}, each non-initial wagon $[\Wa_i]$ of the Mori train represents a \eni. The two adjacent wagons $[\Wa_{i-1}]$ (say it is not initial) and $[\Wa_{i+1}]$ give the information of a deformation $\mathbb{W}_i \to \P^1$ which is $\Q$-Gorenstein, and has two fibers with \enii \ $[\Wa_{i-1}]-[\Wa_{i}]$ and $[\Wa_{i}]-[\Wa_{i+1}]$, and all other fibers are isomorphic to the \eni \ defined by $[\Wa_i]$ (with a fixed mark somewhere). So, when $\delta \geq 2$, we obtain an infinite chain of $\P^1$'s connecting all the \eni \ and \enii \ in the Mori train.
\label{defoverP1}
\end{remark}

\section{Degenerations of $\P^2$ as smooth deformations of $\F_1$} \label{s5}

After the work of B\v{a}descu \cite{B86}, Manetti \cite{Ma91}, and Hacking \cite{H04}  on degenerations of rational surfaces, Hacking and Prokhorov \cite{HP10} proved the following theorem for degenerations of $\P^2$ with only quotient singularities.

\begin{theorem} \cite[Corollary 1.2]{HP10}
Let $W$ be a projective surface with quotient singularities that admits a smoothing to $\P^2$. Then $W$ is a $\Q$-Gorenstein deformation of $\P(a^2,b^2,c^2)$, where $(a,b,c)$ is a Markov triple, and the smoothing to $\P^2$ is a W-surface.
\label{hackingprokhorov}
\end{theorem}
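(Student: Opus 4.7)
The plan is to extract the classification through three layers: controlling the singularities of $W$, controlling the global invariants, and then assembling these to force $W$ to be a partial smoothing of some $\mathbb{P}(a^2,b^2,c^2)$ with $(a,b,c)$ Markov.

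First, I would study the singularities of $W$ locally. A $\mathbb{Q}$-Gorenstein smoothing of a quotient singularity forces that singularity to be a T-singularity, i.e. either an ADE singularity or of type $\frac{1}{dn^2}(1,dna-1)$ with $\gcd(a,n)=1$. Since the total space $\mathcal{W}\to\mathbb{D}$ has general fiber $\mathbb{P}^2$ which is simply connected, a Mayer–Vietoris / Van Kampen argument gluing the Milnor fibers (each with fundamental group $\mathbb{Z}/d$) to the smooth part of $W$ forces $d=1$ at every singularity. Hence every singularity of $W$ is Wahl, and $(W\subset\mathcal{W})\to(0\in\mathbb{D})$ is a W-surface in the sense of Definition \ref{wsurf}.

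Next, I would track the global invariants. Flatness gives $K_W^2=K_{\mathbb{P}^2}^2=9$ and $\chi(\mathcal{O}_W)=1$, while upper semicontinuity pins down $p_g(W)=q(W)=0$. Openness of ampleness in flat families, combined with $-K_{W_t}$ ample for $t\neq 0$, shows that $-K_W$ is $\mathbb{Q}$-Cartier and ample, so $W$ is a log del Pezzo with only Wahl singularities. Moreover the specialization map $H^2(W_t,\mathbb{Q})\to H^2(W,\mathbb{Q})$ together with $b_2(\mathbb{P}^2)=1$ forces $\rho(W)=1$.

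The classification step is the main obstacle and the arithmetic heart of the argument: I must show that a rank-one log del Pezzo with Wahl singularities and $K^2=9$ is a partial $\mathbb{Q}$-Gorenstein smoothing of some $\mathbb{P}(a^2,b^2,c^2)$ with $(a,b,c)$ Markov. Here I would run the MMP of Section \ref{s4} on a W-surface mapping to $W$ after a sequence of antiflips/blowups reaching a toric maximal degeneration. Following Manetti, each further degeneration preserves the invariants and ends at a toric surface $\mathbb{P}(n_1^2,n_2^2,n_3^2)$ with three Wahl boundary singularities. Writing out $K^2=9$ in terms of the weights $(n_1,n_2,n_3)$ via the standard toric formula $K_{\mathbb{P}(n_1^2,n_2^2,n_3^2)}^2=\tfrac{(n_1^2+n_2^2+n_3^2)^2}{n_1^2 n_2^2 n_3^2}$ yields exactly $(n_1^2+n_2^2+n_3^2)^2=9\,n_1^2 n_2^2 n_3^2$, i.e. $n_1^2+n_2^2+n_3^2=3 n_1 n_2 n_3$, which is the Markov equation. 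The absence of local-to-global obstructions (Example \ref{noobstruction}, based on $-K$ being big) shows that every such $\mathbb{P}(a^2,b^2,c^2)$ deforms back up to $W$, closing the loop.

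Finally, I would invoke that the smoothing family is a W-surface almost by construction: Wahl singularities and $\mathbb{Q}$-Cartier $K_{\mathcal{W}}$ are exactly the conditions of Definition \ref{wsurf}, and the generic fiber is smooth since it is $\mathbb{P}^2$. The delicate point I expect to grind against is the MMP step: controlling the sequence of W-modifications so that it genuinely lands at a toric $\mathbb{P}(n_1^2,n_2^2,n_3^2)$ rather than some other rank-one degeneration, which requires the classification of rank-one log del Pezzo Wahl surfaces with $-K$ big—this is where the specific analysis of Hacking–Prokhorov, combined with Manetti's earlier bounds on the number and types of Wahl singularities on such a surface, does the heavy lifting.
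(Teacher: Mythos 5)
You should first be aware that the paper does not prove this statement at all: it is quoted directly as \cite[Corollary 1.2]{HP10}, and Section \ref{s5} uses it as a black box. So your sketch is not competing with an internal argument of the paper; it is an attempted reconstruction of Hacking--Prokhorov's (and Manetti's) proof, and it has to stand on its own. As written, it does not, for two concrete reasons.

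First, the hypothesis only says that $W$ admits \emph{some} smoothing to $\P^2$; that this smoothing is $\Q$-Gorenstein (equivalently, that the family is a W-surface with Wahl singularities) is part of the \emph{conclusion}, so you cannot start from ``a $\Q$-Gorenstein smoothing of a quotient singularity is a T-singularity.'' Your topological substitute does not work: simple connectedness of the general fiber does not force the Milnor fibers to be simply connected --- already in the degeneration $\P^2 \rightsquigarrow \P(1,1,4)$ the Milnor fiber of $\frac{1}{4}(1,1)$ has fundamental group $\Z/2$ --- and for the $\Q$-Gorenstein smoothing of $\frac{1}{dn^2}(1,dna-1)$ the Milnor fiber has $\pi_1\cong \Z/n$, not $\Z/d$, so a van Kampen argument cannot ``force $d=1$.'' The constraint that actually does the work is on second Betti numbers: since $b_2(\P^2)=1$ and the specialization map already accounts for the polarization class, every Milnor fiber must be a rational homology disk; one then needs the nontrivial identification of which quotient singularities admit such smoothings and an argument excluding the non-Wahl candidates, which is precisely part of the content of \cite{HP10}. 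The same issue resurfaces at the end: asserting that the family ``is a W-surface almost by construction'' presupposes $K_{\W}$ $\Q$-Cartier, i.e.\ exactly the $\Q$-Gorenstein property you never established.

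Second, the classification step --- that a rank-one log del Pezzo with Wahl singularities and $K^2=9$ admits a $\Q$-Gorenstein degeneration to a toric $\P(a^2,b^2,c^2)$, so that $W$ is a partial smoothing of one --- is the core of Hacking--Prokhorov's theorem, and your sketch explicitly defers it to ``the specific analysis of Hacking--Prokhorov, combined with Manetti's earlier bounds.'' As a proof of the cited statement this is circular: the only genuinely verified piece is the easy toric endpoint, namely $K^2=\frac{(a^2+b^2+c^2)^2}{a^2b^2c^2}=9$ giving the Markov equation. If your goal is a self-contained proof, the two items that must be supplied are (i) the rational-homology-disk/Wahl and $\Q$-Gorenstein analysis of the local smoothings, and (ii) the degeneration-to-toric argument for rank-one log del Pezzo surfaces; if the goal is only to use the theorem, the honest course is to cite \cite{HP10} as the paper does.
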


Therefore the Markov tree (Figure \ref{f0}) represents the numerical data to understand all of these degenerations of $\P^2$. Each of them over $\D$ is a W-surface, with central fiber a partial $\Q$-Gorenstein smoothing $W$ of some $\P(a^2,b^2,c^2)$, where $(a,b,c)$ is a Markov triple. We understand the minimal resolution of $W$ as particular blow-ups over $\F_{\beta}$ with $\beta=10,7,4$ \cite{Ma91}. Compare with Proposition \ref{combi1}.   

We saw that the MMP on W-surfaces has as one of its ending outcomes the degenerations of $\P^2$ in Theorem \ref{hackingprokhorov}. We call them Markovian planes. Although they are final outcomes of MMP, we can still run it on a birational modification of them to obtain a rich connection with Mori theory. This was done in \cite[Section 3]{Urz16a}, but not explicitly and without any further analysis. This is the purpose of the present paper. 

The trick is very simple, it is the "First W-blow-up" in  \cite[Section 3]{Urz16a}. Given a Markovian plane $\P^2 \rightsquigarrow W$ (Definition \ref{markovian}), let us blow-up a general section. Then we have a W-surface $\F_1 \rightsquigarrow W_0$. Note that $W_0$ has Picard number $2$, and the cone of curves of $W_0$ is generated by two curves $\Gamma_i \simeq \P^1$ $i=1,2$ such that $\Gamma_i \cdot K_{W_0}<0$ and $\Gamma_i^2<0$. These curves are extremal (see e.g. \cite[Lemma 1.22]{KM98}). In the minimal resolution $\widetilde{W}_0$ of $W_0$, the strict transforms of these two curves are the two components of a fiber to the composition $$\widetilde{W}_0 \to \F_{\beta} \to \P^1,$$ and they both are $(-1)$-curves. One of them, say $\Gamma_1$, is a $(-1)$-curve in $W_0$ (not passing through any singularity), and the other $\Gamma_2$ passes through one of the singularities of $W_0$. In the minimal resolution, $\Gamma_2$ touches transversally at one point the only section of $\widetilde{W}_0 \to \F_{\beta} \to \P^1$ which is an exceptional curve of $\widetilde{W}_0 \to W_0$. The situation $\Gamma_2 \subset W_0 \subset \W_0$ defines a \eni \ extremal nbhd of flipping type. See details in \cite[Section 3]{Urz16a}, where it is proved also that after this flip we encounter only flips and at each step they are unique. The reason is that the new generators of the cone of curves are precisely the flipped curve ($K$ positive) and a new flipping curve ($K$ negative). After some finitely many flips, we arrive at a smooth deformation  $\F_1 \rightsquigarrow \F_{m}$ with $m=3,5,7$. This depends on the number of singular points $1,2,3$ of $W$ respectively. This is \cite[Theorem 3.1]{Urz16a}.  

At this point, we have several questions about this MMP process. As $W$ is always a $\Q$-Gorenstein deformation of a $\P(a^2,b^2,c^2)$, we will only consider $W=\P(a^2,b^2,c^2)$. \textit{Is it possible to bound the number of flips for each Markov triple? What is the unique numerical data that we obtain from MMP to each Markov triple? How Markov triples are related via this MMP degenerations?} In the next section we will show answers for all of them.

\begin{remark}
Given a Markovian plane $\P^2 \rightsquigarrow W$ we could have blow-up at the Wahl singularities, using the birational geometry from the previous section. In \cite{Urz16a}, this is called a W-blow-up. In this paper, we do not analyze the effect of considering such situations. Essentially this should be equivalent to what we do, but the "first flip" would now be different.        
\end{remark}

\section{Mutations, Mori trains, and the complete MMP} \label{s6}


Let $\P^2 \rightsquigarrow \P(a^2,b^2,c^2)$ be a Markovian plane, where $(a,b,c)$ is some Markov triple. Consider the general blow-up $$\F_1 \rightsquigarrow \text{Bl}_P(\P(a^2,b^2,c^2))=:W_{a,b,c}$$ of the previous section. In the first subsections, we will explicitly show the MMP for any Markov triple. We do this at least up to a certain flip which will allow us to prove theorems in the final subsection. In this section, we prove all theorems announced in the introduction.

We recall that at each step in the MMP we have a unique flip. We will describe each flip line by line showing how curves change in the relevant chain of Wahl singularities. As described in Example \ref{manetti}, the initial chain of Wahl singularities is either $$\left[{a \choose w_a}\right]-(1)-\left[{c \choose w_c}\right]-(1)- \left[{b \choose w_b}\right],$$ when $(1<a<b<c)$, or $(0)-\left[{c \choose w_c}\right]-(1)- \left[{b \choose w_b}\right]$when $a=1<b<c$, or $(0)-\left[{2 \choose 1}\right]-(0)$ when $a=b=1<2=c$. In the minimal resolution $\widetilde{W}_0$ of $W_{a,b,c}$ we have that the pre-image of the corresponding chain of $\P^1$'s is formed by two fibers and one section of the fibration $\widetilde{W}_0 \to \F_{\beta} \to \P^1$, where $\beta=10,7,4$ respectively. As described in Section \ref{s5}, $\F_1 \rightsquigarrow W_{a,b,c}$ contains the flipping \eni \  neighborhood induced by the curve $\Gamma_2$. The resulting (first) flip is $\F_1 \rightsquigarrow {W_{a,b,c}}^+:=W_1$. From this point, all the (unique) flips will only modify curves and singularities on this chain of Wahl singularities \cite{Urz16a}. The flipped curve (positive for $K$) and the flipping curve (negative for $K$) belongs to the modified chain of Wahl singularities. The unique flipping curve will be highlighted as $\ldots-(1)_{-}-\ldots$, and the flipped curve as $\ldots-(c)_{+}-\ldots$ for some $c\geq 1$. The $i$-th flip will pass from $\F_1 \rightsquigarrow W_{i-1}$ to $\F_1 \rightsquigarrow W_i$. The W-surface $\F_1 \rightsquigarrow W_{0}$ corresponds to $\F_1 \rightsquigarrow W_{a,b,c}$. In the minimal resolution, the previously mentioned flipping curve is touching the unique section of $\widetilde{W}_0$ in the exceptional divisor, so we do not use any notation for that curve. For each flip, we will also indicate the corresponding $\delta$. We always take $w_c\equiv 3a^{-1}b \pmod{c}$, $w_b\equiv 3c^{-1}a \pmod{b}$, and 
$w_a\equiv 3b^{-1}c \pmod{a}$ for the Markov triple $(a<b<c)$. When we move in the corresponding branches, then these numbers may change, even if some $a,b,c$ does not.

\bigskip 

A very simple example is $\F_1 \rightsquigarrow W_{1,1,2}$. In this case the MMP has only one flip:

\bigskip

{\scriptsize $(0)-\left[{2 \choose 1}\right]-(0)$}

Flip 1: $\delta=1$

{\scriptsize $(0)-(3)_{+}-(0)$}

\bigskip

Next we will show explicitly the MMP on all the branches of the Markov tree. As computations is a cumbersome process in each case, we only give details for Subsection \ref{s63} in \ref{app}.

\subsection{MMP on the Fibonacci branch} \label{s61} 

Mori theory has its simplest form in this branch. We recall the Fibonacci branch: 

{\scriptsize  $$(1,1,1)-(1,1,2)-(1,m_0=2,m_1=5)-(1,5,13)-\ldots-(1<m_{k}<m_{k+1})-\ldots$$}

Observe that $w_{m_k}=m_{k-2}$. Set $m_{-1}=m_{-2}=1$. The MMP for $(1<m_{k}<m_{k+1})$ with $k\geq 0$ is:

\bigskip 
 
{\scriptsize$(0)-\left[{ m_{k+1} \choose m_{k-1}}\right]-(1)-\left[{m_{k} \choose m_{k-2}}\right]$}

Flip 1: $\delta=m_{k-1}$

{\scriptsize$(0)-\left[{ m_{k+1}-m_{k-1} \choose m_{k-1}}\right]-(2)_{+}-(1)_{-}-\left[{m_{k} \choose m_{k-2}}\right]$}

Flip 2: $\delta=m_{k-2}$

{\scriptsize$(0)-\left[{ m_{k+1}-m_{k-1} \choose m_{k-1}}\right]-(1)_{-}-\left[{m_{k}-m_{k-2} \choose m_{k-2}}\right]-(2)_{+}$}

Flip 3: $\delta=3$

{\scriptsize$(0)-(5)_{+}-(0)$}

\bigskip 

Here we note that the Fibonacci branch is completely connected via the Mori train corresponding to the extremal P-resolution $[]-5-[]$: $$[]-[\bar{2}, 2, 6]-[2, 2, 2, \bar{2}, 5, 6]-[2, 2, 2, 2, 3, \bar{2}, 2, 7, 6]-
[2, 2, 2, 2, 3, 2, 2, 2, \bar{2}, 5, 7, 6]-\ldots$$

For each vertex in this branch we are choosing one of the \enii \ in this train. After that all the anti-flips are determined. Therefore the key c.q.s. for the Fibonacci branch is $\frac{1}{5}(1,1)$ and $\delta= 3 \cdot 1=3$. 

\subsection{MMP on the Pell branch} \label{s62} At this branch, the MMP already makes a difference between the first vertex and the rest. We will see this in all other branches. Let us first recall the Pell branch:

{\scriptsize  $$(1,1,2)-(1,2,5)-(2,m_0=5,m_1=29)-(2,m_1=29,m_2=169)-\ldots-(2<m_{k}<m_{k+1})-\ldots$$}

\vspace{0.2cm}
The MMP for $(2<5<29)$ is (compare with \cite[Figure 1]{Urz16a}):

\bigskip 

{\scriptsize$\left[{2 \choose 1}\right]-(1)-\left[{ 29\choose 22}\right]-(1)-\left[{5 \choose 4}\right]$}

Flip 1: $\delta=1$

{\scriptsize$\left[{2 \choose 1}\right]-(1)-\left[{25 \choose 19}\right]-(1)_{+}-\left[{4 \choose 3}\right]-(1)_{-}-\left[{5 \choose 4}\right]$}

Flip 2: $\delta=1$

{\scriptsize$\left[{2 \choose 1}\right]-(1)-\left[{ 25 \choose 19}\right]-(1)_{-}-(2)_{+}-\left[{4 \choose 3 }\right]$}

Flip 3: $\delta=6$

{\scriptsize$\left[{2 \choose 1}\right]-(1)_{-}-(2)_{+}-\left[{ 19 \choose 13}\right]-(1)-\left[{4 \choose 3 }\right]$}

Flip 4: $\delta=1$

{\scriptsize $(3)_{+}-(1)_{-}-\left[{ 19 \choose 13}\right]-(1)-\left[{4 \choose 3 }\right]$}

Flip 5: $\delta=13$

{\scriptsize$(0)-\left[{ 6 \choose 5}\right]-(4)_{+}-(1)_{-}-\left[{4 \choose 3 }\right]$}

Flip 6: $\delta=3$

{\scriptsize$(0)-\left[{ 6 \choose 5}\right]-(1)_{-}-(5)_{+}$}

Flip 7: $\delta=5$

{\scriptsize$(0)-(7)_{+}-(0)$}
\bigskip

Let us set $m_{-1}=1$. The MMP for $(2<m_{k}<m_{k+1})$ with $k\geq 1$: 

\bigskip

{\scriptsize$\left[{2 \choose 1}\right]-(1)-\left[{ m_{k+1} \choose w_{m_{k+1}}}\right]-(1)-\left[{m_{k} \choose w_{m_k}}\right]$}

Flip 1: $\delta=m_{k-1}$

{\scriptsize$\left[{2 \choose 1}\right]-(1)-\left[{ m_{k+1}-4m_{k-1} \choose w_{m_{k+1}}-3m_{k-1}}\right]-(1)_{+}-\left[{4 \choose 3}\right]-(1)_{-}-\left[{m_{k} \choose w_{m_k}}\right]$}

Flip 2: $\delta=m_{k-2}$

{\scriptsize$\left[{2 \choose 1}\right]-(1)-\left[{ m_{k+1}-4m_{k-1} \choose w_{m_{k+1}}-3m_{k-1}}\right]-(1)_{-}-\left[{ m_{k}- 4m_{k-2} \choose w_{m_k}-3m_{k-2}}\right]-(1)_{+}-\left[{4 \choose 3}\right]$}

Flip 3: $\delta=6$

{\scriptsize$\left[{2 \choose 1}\right]-(1)_{-}-(2)_{+}-\left[{ 19 \choose 13}\right]-(1)-\left[{4 \choose 3 }\right]$}

\bigskip 

The next flip is equivalent to Flip 3 for $(2,5,29)$, and so we continued from there. This will happen for every other branch as we will see below. Again this MMP depends on one of the Mori trains of the extremal P-resolution $[]-2-[2,2,9,2,2,2,2,4]$. It is the Mori train of the smooth point:
$$[]-[\bar{3}, 2, 9, 2, 2, 2, 2, 4, 2]-[3, 2, 7, \bar{2}, 2, 2, 2, 2, 2, 5, 9, 2, 2, 2, 2, 4, 2]- \ldots $$


Again for each vertex of this Pell branch, we are choosing one \enii \ of this Mori train. The c.q.s. is $\frac{1}{476}(1,361)$ and $\delta= 3 \cdot 2=6$. The other Mori train of this c.q.s. is $$[2, 2, 9, 2, 2, 2, 2, 4]-[2, 2, 7, \bar{2}, 2, 2, 2, 2, 10, 2, 2, 2, 2, 4]- \ldots,$$ but it is not part of the MMP on the Markov tree.


\subsection{MMP on the branches of $(1=a<b<c)$} \label{s63} We recall that for a Markov triple $(1=a<b<c)$ we have two branches. The MMP for each behaves a bit different. 
\label{s6.3}


We start with the branch 

{\scriptsize  $$(1<b<c)-(b<c<3bc-1)-(c<m_0<m_1)-(c<m_1<m_2)-\ldots-(c<m_{k}<m_{k+1})-\ldots$$}

In this case, we have  $w_c=3b-c$ and $w_b=3w_c-b$. Set $m_{-1}=b$ and $m_{-2}=1$. The MMP for $(c<m_k<m_{k+1})$ with $k\geq 0$ is: 

\bigskip 

{\scriptsize$\left[{c \choose w_c}\right]-(1)-\left[{ m_{k+1} \choose w_{m_{k+1}}}\right]-(1)-\left[{m_{k} \choose w_{m_k}}\right]$}

Flip 1: $\delta=m_{k-1}$

{\scriptsize$\left[{c \choose w_c}\right]-(1)-\left[{ m_{k+1}-c^2m_{k-1} \choose w_{m_{k+1}}-m_{k-1}(c w_c+1)}\right]-(1)_{+}-\left[{c^2 \choose c w_c+1}\right]-(1)_{-}-\left[{m_{k} \choose w_{m_k}}\right]$}

Flip 2: $\delta=m_{k-2}$

{\scriptsize$\left[{c \choose w_c}\right]-(1)-\left[{ m_{k+1}-c^2m_{k-1} \choose w_{m_{k+1}}-m_{k-1}(c w_c+1)}\right]-(1)_{-}-\left[{ m_{k}- c^2m_{k-2} \choose w_{m_k}-m_{k-2}(c w_c+1)}\right]-(1)_{+}-\left[{c^2 \choose c w_c+1}\right]$}

Flip 3: $\delta=3c$

{\scriptsize$\left[{c \choose w_c}\right]-(1)_{-}-\left[{b^2 \choose bw_b-1}\right]-(1)_{+}-\left[{ (3c-b)c^2-b \choose (3c-b)(cw_c+1)-w_b}\right]-(1)-\left[{c^2 \choose c w_c+1}\right]$}

Flip 4: $\delta=3b-c$

{\scriptsize$\left[{c-w_c \choose w_c}\right]-(2)_{+}-(1)_{-}-\left[{ (3c-b)c^2-b \choose (3c-b)(cw_c+1)-w_b}\right]-(1)-\left[{c^2 \choose c w_c+1}\right]$}

Flip 5: $\delta=(3c-b)(cw_c+1)-w_b$

{\scriptsize$\left[{c-w_c \choose w_c}\right]-(1)-\left[{ (3c-b)(c(c-w_c)-1)-(b-w_b) \choose (3c-b)(cw_c+1)-w_b}\right]-(2)_+-(1)_{-}-\left[{c^2 \choose c w_c+1}\right]$}

Flip 6: $\delta=cw_c+1$

{\scriptsize$\left[{c-w_c \choose w_c}\right]-(1)-\left[{ (3c-b)(c(c-w_c)-1)-(b-w_b) \choose (3c-b)(cw_c+1)-w_b}\right]-(1)_{-}-\left[{c(c-w_c)-1 \choose c w_c+1}\right]-(2)_+$}

Flip 7: $\delta=3c-b$ \footnote{Excepting the Markov triple $(a=1,b=2,c=5)$ which is work out right after Flip 9.}

{\scriptsize$\left[{c-w_c \choose w_c}\right]-(1)_{-}-\left[{b-w_b \choose w_b}\right]-(3)_{+}-\left[{6\choose 5}\right]-(0)$}

Flip 8: $\delta=3$

{\scriptsize$(5)_{+}-(1)_{-}-\left[{6\choose 5}\right]-(0)$}

Flip 9: $\delta=5$

{\scriptsize$(0)-(7)_{+}-(0)$}

\bigskip

\textbf{Exception:} For $(c=5,m_0=13,m_1=433)$, the Flip 7 is given by:

\bigskip 
{\scriptsize$\left[{4 \choose 1}\right]-(1)_{-}-(4)_{+}-\left[{6\choose 5}\right]-(0)$}
\bigskip

which is precisely Flip 5 of $(2,5,29)$ written in reverse order.

\vspace{0.3cm}

As in the previous cases, the relevant flip that glues all the rest is Flip 3, where we choose \enii's \ from one of the Mori trains of the extremal P-resolution {\scriptsize$$\left[{b^2 \choose b(b-w_b)-1)}\right]-1-\left[{ (3c-b)c^2-b \choose (3c-b)(cw_c+1)-w_b}\right],$$} which has $\delta=3c$.  

\bigskip 

For the other branch 

{\scriptsize  $$(1<b<c)-(1<c<3c-b)-(c<m_0<m_1)-(c<m_1<m_2)-\ldots-(c<m_{k}<m_{k+1})-\ldots$$}

In this case we have $w_{m_0}=3c-2b$. The MMP for $(c<m_0<m_1)$ is: 

\bigskip 

{\scriptsize$\left[{c \choose c-w_c}\right]-(1)-\left[{ m_1\choose w_{m_1}}\right]-(1)-\left[{m_0 \choose w_{m_0}}\right]$}

Flip 1: $\delta=a=1$

{\scriptsize$\left[{c \choose c-w_c}\right]-(1)-\left[{ m_1-c^2 \choose w_{m_1}-(c(c-w_c)+1)}\right]-(1)_{+}-\left[{c^2 \choose c(c-w_c)+1}\right]-(1)_{-}-\left[{m_{0} \choose w_{m_0}}\right]$}

Flip 2: $\delta=b$

{\scriptsize$\left[{c \choose c-w_c}\right]-(1)-\left[{ m_1-c^2 \choose w_{m_1}-(c(c-w_c)+1)}\right]-(1)_{-}-(2)_{+}-\left[{m_0-b \choose w_{m_0}-b }\right]$}

Flip 3: $\delta=bm_0+1$

{\scriptsize$\left[{c \choose c-w_c}\right]-(1)_{-}-(2)_{+}-\left[{m_0w_{m_0}-1 \choose 2(m_0w_{m_0}-1)-m_0^2}\right]-(1)-\left[{m_0-b \choose w_{m_0}-b }\right]$}

Flip 4: $\delta=w_c$

{\scriptsize$(2)_{+}-\left[{c-w_c \choose c-2w_c}\right]-(1)_{-}-\left[{m_0w_{m_0}-1 \choose 2(m_0w_{m_0}-1)-m_0^2}\right]-(1)-\left[{m_0-b \choose w_{m_0}-b }\right]$}

Flip 5: $\delta=3m_0-c$

{\scriptsize$(0)-\left[{6\choose 1}\right]-(3)_{+}-\left[{c-w_c \choose c-2w_c}\right]-(1)_{-}-\left[{m_0-b \choose w_{m_0}-b }\right]$}

Flip 6: $\delta=3$

{\scriptsize$(0)-\left[{6\choose 1}\right]-(1)_{-}-(5)_{+}$}

Flip 7: $\delta=5$

{\scriptsize$(0)-(7)_{+}-(0)$}

\bigskip 

For $k\geq 1$, the MMP requires two extra steps to reach the result of the previous Flip 2. Set $m_{-1}=1$. The MMP for $(c<m_{k}<m_{k+1})$ with $k\geq 1$ is: 

\bigskip 

{\scriptsize$\left[{c \choose c-w_c}\right]-(1)-\left[{ m_{k+1} \choose w_{m_{k+1}}}\right]-(1)-\left[{m_{k} \choose w_{m_k}}\right]$}

Flip 1: $\delta=m_{k-1}$

{\scriptsize$\left[{c \choose c-w_c}\right]-(1)-\left[{ m_{k+1}-c^2m_{k-1} \choose w_{m_{k+1}}-m_{k-1}(c(c-w_c)+1)}\right]-(1)_{+}-\left[{c^2 \choose c(c-w_c)+1}\right]-(1)_{-}-\left[{m_{k} \choose w_{m_k}}\right]$}

Flip 2: $\delta=m_{k-2}$

{\scriptsize$\left[{c \choose c-w_c}\right]-(1)-\left[{ m_{k+1}-c^2m_{k-1} \choose w_{m_{k+1}}-m_{k-1}(c(c-w_c)+1)}\right]-(1)_{-}-\left[{ m_{k}- c^2m_{k-2} \choose w_{m_k}-m_{k-2}(c(c-w_c)+1)}\right]-(1)_{+}-\left[{c^2 \choose c(c-w_c)+1}\right]$}

Flip 3: $\delta=3c$

{\scriptsize$\left[{c \choose c-w_c}\right]-(1)-\left[{ m_1-c^2 \choose w_{m_1}-(c (c-w_c)+1)}\right]-(1)_{+}-\left[{ bc^2-m_0 \choose b(c(c-w_c)+1)-w_{m_0}}\right]-(1)_{-}-\left[{c^2 \choose c(c-w_c)+1}\right]$}

Flip 4: $\delta=b$

{\scriptsize$\left[{c \choose c-w_c}\right]-(1)-\left[{ m_1-c^2 \choose w_{m_1}-(c(c-w_c)+1)}\right]-(1)_{-}-(2)_{+}-\left[{m_0-b \choose w_{m_0}-b }\right]$}
\bigskip

We note that on Flip 3 we have the key extremal P-resolution {\scriptsize $$\left[{ m_1-c^2 \choose w_{m_1}-(c(c-w_c)+1)}\right]-1-\left[{ bc^2-m_0 \choose b(c(c-w_c)+1)-w_{m_0}}\right]$$} which defines a Mori train which is in bijection with this branch. It has again $\delta=3c$. This c.q.s. will be discussed later in the general case.

\subsection{MMP for general Markov triples} \label{s64} We now partially describe the MMP for any general Markov triple $(1<a<b<c)$. It turns out that the partial MMP describe here will glue to the MMP of the next smaller branch, and so on until it finishes. That will be proved in the next subsection.

A general Markov triple $(1<a<b<c)$ has two branches, and so we describe MMP for each of them.

We start with the branch 

{\scriptsize $$
(a<b<c)-(b<c<3 b c-a)-\left(c<m_0<m_1\right)-\left(c<m_1<m_2\right)-\ldots-\left(c<m_k<m_{k+1}\right)-\ldots$$}

The MMP for $(c<m_0<m_1)$ is: 

\bigskip 

{\scriptsize$\left[{c \choose w_c}\right]-(1)-\left[{ m_1\choose w_{m_1}}\right]-(1)-\left[{m_0 \choose w_{m_0}}\right]$}

Flip 1: $\delta=b$

{\scriptsize$\left[{c \choose w_c}\right]-(1)-\left[{ m_1-bc^2 \choose w_{m_1}-b(c w_c+1)}\right]-(1)_{+}-\left[{c^2 \choose c w_c+1}\right]-(1)_{-}-\left[{m_{0} \choose w_{m_0}}\right]$}

Flip 2: $\delta=a$

{\scriptsize$\left[{c \choose w_c}\right]-(1)-\left[{ m_1-bc^2 \choose w_{m_1}-b(c w_c+1)}\right]-(1)_{-}-\left[{b^2 \choose bw_b-1}\right]-(1)_{+}-\left[{m_0-ab^2 \choose w_{m_0}-a(bw_b-1)}\right]$}

Flip 3: $\delta=a(3c-ab)+b$

{\scriptsize$\left[{c \choose w_c}\right]-(1)_{-}-\left[{b^2 \choose bw_b-1}\right]-(1)_{+}-\left[{(m_0-ab^2)(3c-ab)-b \choose (w_{m_0}-a(b w_b-1))(3c-ab)-w_b }\right]-(1)-\left[{m_0-ab^2 \choose w_{m_0}-a(b w_b-1)}\right]$}





Flip 4: Let $u:=3ab-c$ and $w_u:\equiv3a^{-1}b \pmod{u}$. $\delta=u$

\vspace{0.1cm}

{\scriptsize$\left[{c-u a^2 \choose w_c- u (aw_a+1)}\right]-(1)_{+}-\left[{a^2 \choose aw_a+1}\right]-(1)_{-}-\left[{(m_0-ab^2)(3c-ab)-b \choose (w_{m_0}-a(b w_b-1))(3c-ab)-w_b }\right]-(1)-\left[{m_0-ab^2 \choose w_{m_0}-a(b w_b-1)}\right]$}

\vspace{0.1cm} 

Flip 5: Let $v:=3au-b$ and $w_v:\equiv3u^{-1}a \pmod{v}$. $\delta=(3c-ab)(u(3b-au)+a)-v$

\vspace{0.1cm}
{\scriptsize $\left[{c-ua^2 \choose w_c-u(aw_a+1)}\right]-(1)-\left[{(3b-au)((c-ua^2)(3c-ab)-3a^2)+v\choose (3b-au)((w_c-u(aw_a+1))(3c-ab)-3aw_a)+v+w_v}\right]-(1)_{+}- $ 

$\hspace{7.5 cm}\left[{a^2 \choose aw_a+1}\right]-(1)_{-}-\left[{m_0-ab^2 \choose w_{m_0}-a(b w_b-1)}\right]$}

Flip 6: $\delta=u(3b-a u)+a$ \footnote{Excepting $(u=1,a=2)$.\label{u=1}}

{\scriptsize$\left[{c-u a^2 \choose w_c-u (aw_a+1)}\right]-(1)-\left[{(3b-au)((c-ua^2)(3c-ab)-3a^2)+v\choose (3b-au)((w_c-u(aw_a+1))(3c-ab)-3aw_a)+v+w_v}\right]-(1)_{-}-$

$\hspace{6.3cm} \left[{(3b-au)(c-u a^2)-a \choose (w_c-u (aw_a+1))(3b-a u)-w_a}\right]-
(1)_{+}-\left[{a^2 \choose aw_a+1}\right]$}

Flip 7: $\delta=3c-ab$. We have two cases:

If $u<a$ \footnote{Excepting the $v=1$ case.\label{f=1}}, then

{\scriptsize$\left[{c-u a^2 \choose w_c-u(aw_a+1)}\right]-(1)-\left[{(c-u a^2)(3b-a u)-a \choose (w_c- u (aw_a+1))(3b-a u)-w_a}\right]-(1)_{+}-\left[{v a^2-b \choose v (aw_a+1)-w_b}\right]-
(1)_{-}-\left[{a^2 \choose aw_a+1}\right]$}

\vspace{0.3cm}
If $a<u$, then

{\scriptsize$\left[{c-u a^2 \choose w_c- u (aw_a+1)}\right]-(1)-\left[{b-v a^2 \choose w_b-v (aw_a+1)}\right]-(1)_{+}-\left[{M \choose Q}\right]-
(1)_{-}-\left[{a^2 \choose aw_a+1}\right]$}

\vspace{0.1cm}
where

{\scriptsize $\bullet$ $M=((c-ua^2)(3b-au)-a)-(3c-ab)(b-va^2),$}

\vspace{-0.15cm}
{\scriptsize $\bullet$ $Q=((w_c-u(aw_a+1))(3b-au)-w_a)-(3c-ab)(w_b-v(aw_a+1)).$}

\vspace{0.3cm}
For the case $u<a$ we ought to compute one more flip to be used later.

\vspace{0.1cm}
Flip 8: $\delta=v$. We have two cases: 

\vspace{0.1cm}
If $1<u$, then

{\scriptsize$\left[{c-u a^2 \choose w_c-u (aw_a+1)}\right]-(1)-\left[{(c-u a^2)(3b-a u)-a \choose (w_c- u (aw_a+1))(3b-a u)-w_a}\right]-(1)_{-}-\left[{u^2 \choose u(u-w_{u})-1}\right]$

$\hspace{8cm}-(1)_{+}- \left[{b-v u^2 \choose (b-w_b)-v (u(u-w_{u})-1)}\right]$}

If $u=1$, then

{\scriptsize$\left[{c-a^2 \choose w_c-(aw_a+1)}\right]-(1)-\left[{(c-a^2)(3b-a)-a \choose (w_c-(aw_a+1))(3b-a)-w_a}\right]-(1)_{-}-
(2)_{+}-\left[{b-v \choose (b-w_b)-v}\right]$}

\bigskip \bigskip

For $k\geq 1$, the MMP requires two extra steps to reach the result of the previous Flip 2. Set $m_{-1}=b$. The MMP for $(c<m_{k}<m_{k+1})$ with $k\geq 1$ is:

\bigskip 

{\scriptsize$\left[{c \choose w_c}\right]-(1)-\left[{ m_{k+1} \choose w_{m_{k+1}}}\right]-(1)-\left[{m_{k} \choose w_{m_k}}\right]$}

Flip 1: $\delta=m_{k-1}$

{\scriptsize$\left[{c \choose w_c}\right]-(1)-\left[{ m_{k+1}-c^2m_{k-1} \choose w_{m_{k+1}}-m_{k-1}(c w_c+1)}\right]-(1)_{+}-\left[{c^2 \choose c w_c+1}\right]-(1)_{-}-\left[{m_{k} \choose w_{m_k}}\right]$}

Flip 2: $\delta=m_{k-2}$

{\scriptsize$\left[{c \choose w_c}\right]-(1)-\left[{ m_{k+1}-c^2m_{k-1} \choose w_{m_{k+1}}-m_{k-1}(c w_c+1)}\right]-(1)_{-}-\left[{ m_{k}- c^2m_{k-2} \choose w_{m_k}-m_{k-2}(c w_c+1)}\right]-(1)_{+}-\left[{c^2 \choose c w_c+1}\right]$}

Flip 3: $\delta=3c$

{\scriptsize$\left[{c \choose w_c}\right]-(1)-\left[{ m_1-b c^2 \choose w_{m_1}-b(c w_c+1)}\right]-(1)_{+}-\left[{ ac^2-m_0 \choose a(cw_c+1)-w_{m_0}}\right]-(1)_{-}-\left[{c^2 \choose c w_c+1}\right]$}

Flip 4: $\delta=a$

{\scriptsize$\left[{c \choose w_c}\right]-(1)-\left[{ m_1-bc^2 \choose w_{m_1}-b(c w_c+1)}\right]-(1)_{-}-\left[{b^2 \choose bw_b-1}\right]-(1)_{+}-\left[{m_0-ab^2 \choose w_{m_0}-a(bw_b-1)}\right]$}

\bigskip 
This last line is exactly Flip 2 for the $(c<m_0<m_1)$ case.

\vspace{0.5cm}
\textbf{We now analyze the exceptions from the footnotes.}

For $v=1$ and $u<a$, note that $b-va^2>0$. Then, the seventh flip does not behave as the $u<a$ case, instead as the $a<u$ situation. From $v=1$, we derive the relations $b=u^2+a^2$ and $b-w_b=uw_u+a(a-w_a)$ which let us write this flip as follows:

\vspace{0.15cm}
Flip 7: $\delta=3c-ab$

{\scriptsize$\left[{c-u a^2 \choose w_c- u (aw_a+1)}\right]-(1)_{-}-\left[{u^2 \choose u(u-w_u)-1}\right]-(1)_{+}-\left[{M \choose Q}\right]-
(1)-\left[{b-u^2 \choose w_b-(u(u-w_u)-1)}\right]$}

\vspace{0.15cm}
Through operations involving the Markov equations $1+u^2+a^2=3ua$ and $u^2+a^2+b^2=3uab$, we can infer that $M=(3a-u)(a^2(3b-ua)-3u^2)+1$. Also, $M-Q=(3a-u)((b-w_b-(uw_u+1))(3b-au)-3uw_u)+1$. This reveals that the chain of Wahl singularities agrees precisely to Flip 5 of the MMP for the triple $(1<b<c<m_0)$, but written in reverse order. The position of the extremal neighborhood is also justified by this observation.

\vspace{0.15cm}
For example, consider $(c=433< m_0=37666<m_1=48928105)$. Here we have $v=1$ and $u=2<a=5$. This gives us:

\vspace{0.15cm}
Flip 7: $\delta=1154$

{\scriptsize$\left[{383 \choose 92}\right]-(1)_{-}-\left[{4 \choose 1}\right]-(1)_{+}-\left[{24870 \choose 5929}\right]-(1)-\left[{25 \choose 6}\right]$}

\bigskip
Now let us consider the exception $(u=1,a=2)$. In this case, we get $b=5$, $c=29$, $m_0=433$, and $m_1=37666$. From Flip 5, it follows that:

\vspace{0.15cm}
Flip 5: $\delta=1154$

{\scriptsize$\left[{25 \choose 19}\right]-(1)_{-}-\left[{24870 \choose 18941}\right]-(1)_{+}-\left[{4 \choose 3}\right]-(1)_{-}-\left[{383 \choose 291}\right]$}

Flip 6: $\delta=15$

{\scriptsize$\left[{25 \choose 19}\right]-(1)-\left[{24870 \choose 18941}\right]-(1)_{-}-\left[{323 \choose 246}\right]-(1)_{+}-\left[{4 \choose 3}\right]$}

Flip 7: $\delta=77$

{\scriptsize$\left[{25 \choose 19}\right]-(1)_{-}-(2)_{+}-\left[{249 \choose 146}\right]-(1)-\left[{4 \choose 3}\right]$}

Flip 8: $\delta=6$

{\scriptsize$(2)_{+}-\left[{19 \choose 13}\right]-(1)_{-}-\left[{249 \choose 146}\right]-(1)-\left[{4 \choose 3}\right]$}.
\bigskip 

Observe that Flip 7 of $(433,37666,48928105)$ agrees to Flip 5 of $(29,433,37666)$ written in reverse order. Also, Flip 7 of $(29,433,37666)$ corresponds to Flip 5 of $(5,29,433)$ written in reverse order as well.

\vspace{0.2cm}

\begin{remark}
We have that the key extremal P-resolution 
{\scriptsize$$\left[{ m_1-b c^2 \choose w_{m_1}- b(c w_c+1)}\right]-1-\left[{ ac^2-m_0 \choose a(cw_c+1)-w_{m_0}}\right]$$} 

\noindent contracts to $\frac{1}{\Delta}(1,\Omega)$ where $\Delta=c^2(c^2 D -(c-1)^2)=c^2(9c^4-5c^2+2c-1)$ and $\Omega=c^2+(cw_c+1)(c^2 D -(c-1)^2)$, and $D=9c^2-4$. This c.q.s. connects this whole Markov branch to one of its Mori trains. The HJ continued fraction of this singularity appeared already in Remark \ref{markovcqs} as $$[x_1,\ldots,x_r,2,y_s,\ldots,y_1+x_1,\ldots,x_r+y_s,\ldots,y_1+1,\underbrace{2,\ldots,2}_6,x_1+1,\ldots,x_r+y_s,\ldots,y_1],$$ where $m=c$ and $q=w_c$. The relevant c.q.s., which given the bijection with a Mori train, has ``inverted" c.q.s. as $m=c$ but $q=c-w_c$, and so it is $$[y_1,\ldots,y_s,2,x_r,\ldots,x_1+y_1,\ldots,y_s+x_r,\ldots,x_1+1,\underbrace{2,\ldots,2}_6,y_1+1,\ldots,y_s+x_r,\ldots,x_1].$$ As we saw in Remark \ref{markovcqs}, the study of extremal P-resolutions can be reduced to $$\frac{\Delta_0}{\Omega_0} =[5,x_1,\ldots,x_r,2,y_s,\ldots,y_1,5],$$ where $\delta=3c$. It admits an extremal P-resolution with a $(-2)$-middle-curve.
\label{markovcqsII}
\end{remark}

We now look at the other branch of $(1<a<b<c)$ 

{\scriptsize $$
(a<b<c)-(a<c<3 ac-b)-\left(c<m_0<m_1\right)-\left(c<m_1<m_2\right)-\ldots-\left(c<m_k<m_{k+1}\right)-\ldots$$} 

Let us also consider the minimum possible Markov tripe $(a<p_0<p_1)$ which can be obtained mutating $(a<b<c)$ by keeping $a$ as the smallest number in the triples (i.e. $(a<p_0<p_1)$ is an initial vertex for the branch corresponding to $(a<b<c)$ keeping $a$). Then the MMP for $(c<m_0<m_1)$ is:

\bigskip 

{\scriptsize$\left[{c \choose c-w_c}\right]-(1)-\left[{ m_1\choose w_{m_1}}\right]-(1)-\left[{m_0 \choose w_{m_0}}\right]$}

Flip 1: $\delta=a$

{\scriptsize$\left[{c \choose c-w_c}\right]-(1)-\left[{ m_1-ac^2 \choose w_{m_1}-a(c (c-w_c)+1)}\right]-(1)_{+}-\left[{c^2 \choose c(c-w_c)+1}\right]-(1)_{-}-\left[{m_{0} \choose w_{m_0}}\right]$}

Flip 2: $\delta=b$

{\scriptsize$\left[{c \choose c-w_c}\right]-(1)-\left[{ m_1-ac^2 \choose w_{m_1}-a(c (c-w_c)+1)}\right]-(1)_{-}-\left[{a^2 \choose a(a-w_a)-1}\right]-(1)_{+}-\left[{m_0-ba^2 \choose w_{m_0}-b(a(a-w_a)-1)}\right]$}

Flip 3: $\delta=b(3c-ab)+a$

{\scriptsize$\left[{c \choose c-w_c}\right]-(1)_{-}-\left[{a^2 \choose a(a-w_a)-1}\right]-(1)_{+}-\left[{(m_0-ba^2)(3c-ab)-a \choose (w_{m_0}-b(a(a- w_a)-1))(3c-ab)-(a-w_a) }\right]$

$\hspace{8.5cm}-(1)-\left[{m_0-ba^2 \choose w_{m_0}-b(a(a-w_a)-1)}\right]$}




Flip 4: $\delta=u=3ab-c$

{\scriptsize$\left[{a^2 \choose a(a-w_a)-1}\right]-(1)_{+}-\left[{c-u a^2 \choose (c-w_c)-u (a(a-w_a)-1)}\right]-(1)_{-}-\left[{(m_0-ba^2)(3c-ab)-a \choose (w_{m_0}-b(a(a- w_a)-1))(3c-ab)-(a-w_a) }\right]$

$\hspace{8.3cm}-(1)-\left[{m_0-ba^2 \choose w_{m_0}-b(a(a-w_a)-1)}\right]$}

Flip 5: $\delta=3m_0-ac$

{\scriptsize$\left[{a^2 \choose a(a-w_a)-1}\right]-(1)-\left[{M \choose Q}\right]-(1)_{+}-\left[{c-u a^2 \choose (c-w_c)- u (a(a-w_a)-1)}\right]-(1)_{-}-\left[{m_0-ba^2 \choose w_{m_0}-b(a(a-w_a)-1)}\right]$}

\vspace{0.1cm}
where

{\scriptsize $\bullet$ $M=((m_0-ba^2)(3c-ab)-a)-(3m_0-ac)(c-ua^2),$}

\vspace{-0.15cm}
{\scriptsize $\bullet$ $Q=((w_{m_0}-b(a(a- w_a)-1))(3c-ab)-(a-w_a))-(3m_0-ac)((c-w_c)-u(a(a-w_a)-1)).$}

\vspace{0.2cm}
Flip 6: Let $e:=3ap_0-p_1$, $w_e:\equiv 3p_0^{-1}a \pmod{e}$, and $f:=3ae-p_0$ \footnote{Excepting the $f=1$ case and  $(a=2,e=1)$.}. $\delta=3a$

{\scriptsize$\left[{a^2 \choose a(a-w_a)-1}\right]-(1)-\left[{M \choose Q}\right]-(1)_{-}-\left[{fa^2-p_0 \choose f(a(a-w_a)-1)-(p_0-w_{p_0}) }\right]-(1)_{+}-\left[{p_1-ea^2 \choose (p_1-w_{p_1})-e(a(a-w_a)-1)}\right]$}
\vspace{0.1cm}

Flip 7: $\delta=3a(3p_0-ea)-(3e-fa)$

{\scriptsize$\left[{a^2 \choose a(a-w_a)-1}\right]-(1)_{-}-\left[{fa^2-p_0 \choose f(a(a-w_a)-1)-(p_0-w_{p_0}) }\right]-(1)_{+}-\left[{(p_1-ea^2)(3p_0-ea)-a \choose ((p_1-w_{p_1})-e(a(a-w_a)-1))(3p_0-ae)-(a-w_a)}\right]$

$\hspace{8.7cm}-(1)-\left[{p_1-ea^2 \choose (p_1-w_{p_1})-e(a(a-w_a)-1)}\right]$}

Flip 8: $\delta=f$. We have two cases: 

If $e,f>1$, then

\vspace{0.1cm}
{\scriptsize$\left[{p_0-fe^2 \choose (p_0-w_{p_0})-f(ew_e+1)}\right]-(1)_{+}-\left[{e^2 \choose ew_e+1 }\right]-(1)_{-}-\left[{(p_1-ea^2)(3p_0-ea)-a \choose ((p_1-w_{p_1})-e(a(a-w_a)-1))(3p_0-ae)-(a-w_a)}\right]$

\vspace{0.2cm}
$\hspace{8.7cm}-(1)-\left[{p_1-ea^2 \choose (p_1-w_{p_1})-e(a(a-w_a)-1)}\right]$}

If $e=1$, then

{\scriptsize$\left[{p_0-f \choose p_0-w_{p_0}}\right]-(2)_{+}-(1)_{-}-\left[{(p_1-a^2)(3p_0-a)-a \choose ((p_1-w_{p_1})-(a(a-w_a)-1))(3p_0-a)-(a-w_a)}\right]-(1)-\left[{p_1-a^2 \choose (p_1-w_{p_1})-(a(a-w_a)-1)}\right]$}
\bigskip 

For $k\geq 1$, the MMP requires two extra steps to reach the result of the previous Flip 2. Set $m_{-1}=a$. The MMP for $(c<m_{k}<m_{k+1})$ with $k\geq 1$ is:

\bigskip 

{\scriptsize$\left[{c \choose c-w_c}\right]-(1)-\left[{ m_{k+1} \choose w_{m_{k+1}}}\right]-(1)-\left[{m_{k} \choose w_{m_k}}\right]$}

Flip 1: $\delta=m_{k-1}$

{\scriptsize$\left[{c \choose c-w_c}\right]-(1)-\left[{ m_{k+1}-c^2m_{k-1} \choose w_{m_{k+1}}-m_{k-1}(c(c-w_c)+1)}\right]-(1)_{+}-\left[{c^2 \choose c(c-w_c)+1}\right]-(1)_{-}-\left[{m_{k} \choose w_{m_k}}\right]$}

Flip 2: $\delta=m_{k-2}$

{\scriptsize$\left[{c \choose c-w_c}\right]-(1)-\left[{ m_{k+1}-c^2m_{k-1} \choose w_{m_{k+1}}-m_{k-1}(c(c-w_c)+1)}\right]-(1)_{-}-\left[{ m_{k}- c^2m_{k-2} \choose w_{m_k}-m_{k-2}(c(c-w_c)+1)}\right]-(1)_{+}-\left[{c^2 \choose c(c-w_c)+1}\right]$}

Flip 3: $\delta=3c$

{\scriptsize$\left[{c \choose c-w_c}\right]-(1)-\left[{ m_1-a c^2 \choose w_{m_1}-a(c(c-w_c)+1)}\right]-(1)_{+}-\left[{ bc^2-m_0 \choose b(c(c-w_c)+1)-w_{m_0}}\right]-(1)_{-}-\left[{c^2 \choose c(c-w_c)+1}\right]$}

Flip 4: $\delta=b$

{\scriptsize$\left[{c \choose c-w_c}\right]-(1)-\left[{ m_1-ac^2 \choose w_{m_1}-a(c(c-w_c)+1)}\right]-(1)_{-}-\left[{a^2 \choose a(a-w_a)-1}\right]-(1)_{+}-\left[{m_0-ba^2 \choose w_{m_0}-b(a(a-w_a)-1)}\right]$}
\bigskip

This is exactly Flip 2 for the $(c<m_0<m_1)$ case.
\vspace{0.5cm}

\textbf{We now analyze the exceptions in the previous footnotes.}
\vspace{0.15cm}

For $(f=1<e<a)$, we observe that Flip 5 has the same extremal P-resolution as the Flip 2 of a triple $(a<p_k<p_{k+1})$ in the branch
{\scriptsize  $$(1<e<a)-(e<a<p_0)-(a<p_0<p_1)-\ldots-(a<c=p_k<m_0=p_{k+1})-\ldots$$.}
Therefore, we find that

\bigskip 
Flip 6: $\delta=3a$

{\scriptsize$\left[{a^2 \choose a(a-w_a)-1}\right]-(1)-\left[{M \choose Q}\right]-(1)_{-}-\left[{(3a-e)a^2-e \choose (3a-e)(a(a-w_a)-1)-w_e }\right]-(1)_{+}-\left[{e^2 \choose ew_e+1}\right]$}

\bigskip 
where $w_e\equiv 3p_0^{-1}a \pmod{e}$. After some computations we are able to prove that {\scriptsize $M=(3a-e)((p_0-e^2)(3p_0-ea)-3e^2)+1$} and {\scriptsize $Q=(3a-e)((p_0-w_{p_0}-(ew_e+1))(3p_0-ea)-3ew_e)+1$}. Indeed, that chain of Wahl singularities is exactly the same as Flip 6 of the Markov triple $(p_0<p_1<3p_0p_1-a)$.

\vspace{0.3cm}
 For $(e=1,a=2)$, the Markov triples that satisfy this situation are of the form $(c<m_0<m_1=3m_0c-2)$, where $c$ and $m_0$ are Pell numbers. We obtain,

\bigskip 
Flip 6: $\delta=6$

{\scriptsize$\left[{4 \choose 1}\right]-(1)-\left[{246 \choose 77}\right]-(1)_{-}-\left[{19 \choose 6}\right]-(2)_{+}$}

\bigskip 

From the exceptions of the previous case, we note that this chain of Wahl singularities agrees to Flip 6 of the Markov triple $(5,13,433)$.

\vspace{0.5cm}
The computations from Sections 7.1 to 7.4 give a proof of Theorem \ref{branches-trains}.

\subsection{The complete MMP on an arbitrary Markov triple} \label{s65}

In this section, we are going to prove Theorem \ref{changes-branches} and Theorem \ref{complete-MMP}. In addition, we will have a full description of MMP for any Markov triple $(a,b,c)$. 

The idea is the following. Let $(a<b<c)$ be an arbitrary Markov triple. Then it belongs to a branch where the label by $a_0:=a$. Hence we have the a situation  {\scriptsize $$ \left(a_0<m_{0,0}<m_{1,0} \right)-\left(a_0<m_{1,0}<m_{2,0}\right)-\ldots-\left(a_0<m_{k,0}=b<m_{k+1,0}=c\right)-\ldots$$} 
for some $k\geq 0$, where $3a m_{0,0}-m_{1,0} <a$. This is, this branch starts with the vertex $(a_0<m_{0,0}<m_{1,0})$. 

By Theorem \ref{branches-trains}, proved in the previous sections, we have that MMP stabilizes to the MMP of $(a_0<m_{0,0}<m_{1,0})$ for all such pairs after at most 4 flips. If $a_0=1$, then we are in the Fibonacci branch and we know how MMP ends.

If $a_0>1$, then we consider the unique mutation $$(a_1:=3a_0 m_{0,0}-m_{1,0}<a_0<m_{0,0})$$ from $(a_0<m_{0,0}<m_{1,0})$ which decreases the first coordinate. We will prove in Lemma \ref{lemma8} and Lemma \ref{lemma12} that the MMP of $(a_0<m_{0,0}<m_{1,0})$ stabilizes to the MMP of $(a_1<a_0<m_{0,0})$ after a few flips. Thus we consider the branch corresponding to $(a_1<a_0<m_{0,0})$ labelled by $a_1$, and we repeat. In this process, we will be considering parts of branches {\scriptsize $$ \left(a_i<m_{0,i}<m_{1,i} \right)-\left(a_i<m_{1,i}<m_{2,i}\right)-\ldots-\left(a_i<m_{k_i,i}<m_{k_i+1,i}\right)-\ldots$$} for various $i$, until we arrive to $a_\nu=1$ for some $\nu$.


In the next two lemmas, we will change branches from  $(a=a_0<b=m_{0,0}<c=m_{1,0})$ to $(a_1<m_{k_1,1}=a<m_{k_1+1,1}=b)$ to simplify notation. 

\begin{lemma}
Let $(a=a_0<b=m_{0,0}<c=m_{1,0})$ be a Markov triple such that $3ab-c<a$. If the mutation $(a_1=3ab-c<a_0<m_{0,0})$ satisfies that $a_2=3a_1a-b<a_1$, then the MMP of $\F_1  \rightsquigarrow  W_{a,b,c}$ stabilizes to the MMP of $\F_1  \rightsquigarrow W_{a_1,a,b}$ in at most 8 flips.
\label{lemma8}
\end{lemma}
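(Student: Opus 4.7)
The plan is to carry out both MMPs explicitly using the formulas of Section \ref{s64} and compare the resulting chains of Wahl singularities to detect the stabilization point.

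First I would apply the Section \ref{s64} initial-vertex formulas to compute the MMP on $\F_1 \rightsquigarrow W_{a,b,c}$. The hypothesis $a_1 = 3ab - c < a$ confirms that $(a,b,c)$ is the initial vertex of branch $a$, so those formulas apply with the role of the ``parent'' triple being played by $(a_2, a_1, a)$, which is itself a Markov triple thanks to the mutation identity $b = 3 a_1 a - a_2$. I would run through Flips 1 through 8 of the initial-vertex case, tracking each successive chain of Wahl singularities; the subcases chosen at Flip 7 and Flip 8 depend on the auxiliary integer $a_3 = 3 a_1 a_2 - a$, which plays the role of ``$u$'' in the Section \ref{s64} formulas, and correspondingly on whether $a_3 < a_2$ or $a_3 \geq a_2$ and whether $a_3 > 1$ or $a_3 = 1$.

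Second I would apply the same Section \ref{s64} formulas to compute the MMP on $\F_1 \rightsquigarrow W_{a_1, a, b}$, now with $(a_1, a, b)$ as the current initial-vertex triple and $(a_3, a_2, a_1)$ as its parent (a Markov triple by the relation $a = 3 a_2 a_1 - a_3$). The hypothesis $a_2 < a_1$ confirms that $(a_1, a, b)$ is the initial vertex of its branch, so only the first few flips of this MMP need to be computed.

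The crux of the proof is to verify that, at some step no later than Flip 8 on the $(a,b,c)$-side, the chain of Wahl singularities coincides with an early chain of the $(a_1, a, b)$-MMP, read in the appropriate orientation. The reversal of orientation reflects the geometric fact that, after the mutation $(a,b,c) \mapsto (a_1, a, b)$, the distinguished flipping curve of the relevant extremal P-resolution on the universal family is approached from the opposite end of the ruling. Checking the coincidence reduces to matching the explicit Wahl data entry by entry, which follows from expanding the second coordinates using the two Markov identities $a_1^2 + a^2 + b^2 = 3 a_1 a b$ and $a_2^2 + a_1^2 + a^2 = 3 a_2 a_1 a$ together with the T-weight transformation rules recorded at the start of Section \ref{s3} (describing how $w_a, w_b, w_c$ change under the mutation). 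A key arithmetic observation that makes the comparison work is that $3 a_3^{-1} a_1 \equiv 3 a_1^{-1} a \pmod{a_2}$, so that the T-weight of $a_2$ in the parent triple $(a_2, a_1, a)$ agrees with that of $a_2$ in $(a_3, a_2, a_1)$. Once the two chains coincide, uniqueness of MMP flips \cite[Section 3]{Urz16a} forces the two MMPs to proceed identically from that step onward.

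The main obstacle will be the case analysis imposed by the Flip 7 and Flip 8 subcases of Section \ref{s64}, together with the footnote exceptions ``$v = 1$'' in Flip 7 (which occurs precisely when $a_4 = 3 a_2 a_3 - a_1 = 1$) and ``$u = 1$'' in Flip 8 (which occurs when $a_3 = 1$). In each subcase one verifies separately that the identification holds and that the bound of eight flips on the $(a,b,c)$-side is not exceeded; the computation is analogous in each case but uses the corresponding formulas (including the alternative Flip 7 given in the footnote) of Section \ref{s64}.
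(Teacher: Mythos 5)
Your overall strategy coincides with the paper's: run both MMPs explicitly with the flip formulas of Sections \ref{s63}--\ref{s64}, match the resulting chains of Wahl singularities entry by entry with the orientation reversed (using the Markov equations and the T-weight mutation rules), and conclude by uniqueness of the flips; the case division governed by $u:=3a_2a_1-a$ (your $a_3$) versus $a_2$, together with the footnote exceptions $u=1$ and $v=1$, is also how the paper organizes the verification, with Flip 7 or Flip 8 of the $(a,b,c)$-side matching Flip 5 or Flip 4 of the $(a_1,a,b)$-side. However, two concrete points need repair. First, your case analysis omits $a_2=1$, which does occur under the hypotheses (e.g. $(a,b,c)=(5,29,433)$, where $a_1=2$ and $a_2=1$). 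In that case the Section \ref{s64} formulas you propose to apply to $\F_1\rightsquigarrow W_{a,b,c}$ are not available, since the underlying triple $(a_2<a_1<a)$ has smallest entry $1$; one must instead use the formulas of Section \ref{s63}, and the stabilization is then read off at Flip 7 of that MMP against Flip 5 of the MMP for $W_{a_1,a,b}$ in reverse order. The paper treats this as a separate case, and your plan as written would have no formulas to run there.

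Second, your ``key arithmetic observation'' is misstated in exactly one of the two main subcases. When $u<a_2$, the passage from $(a_3<a_2<a_1)$ to $(a_2<a_1<a)$ is the mutation removing the smallest entry, and by the very transformation rules you cite the T-weights of $a_2$ and $a_1$ are replaced by their complements, not preserved: the T-weight of $a_2$ in $(a_2,a_1,a)$ equals $a_2$ minus its T-weight in $(a_3,a_2,a_1)$. The congruence $3a_3^{-1}a_1\equiv 3a_1^{-1}a \pmod{a_2}$ that you wrote is true, but $3a_3^{-1}a_1$ is not the T-weight of $a_2$ in $(a_3,a_2,a_1)$ (that is $3a_1^{-1}a_3$, its complement), so the conclusion ``the T-weights agree'' fails there; equality of T-weights holds only in the subcase $u>a_2$, where the parent triple is ordered $(a_2<u<a_1)$ and the mutation removes the middle entry (note that in this subcase the parent of $(a_1,a,b)$ is not ``$(a_3,a_2,a_1)$'' as ordered, and the other-branch formulas of Section \ref{s64} are the relevant ones for the $(a_1,a,b)$-MMP). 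If you carry out the entry-by-entry matching in the subcase $u<a_2$ using equality rather than complementation, the second coordinates of the Wahl data will not agree; with the complementation rule, which is what the paper uses, the identification of Flip 8 of the $(a,b,c)$-side with Flip 4 of the $(a_1,a,b)$-side read backwards goes through, and the bound of eight flips follows.
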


\begin{figure}[htbp]
\centering
\includegraphics[width=12cm]{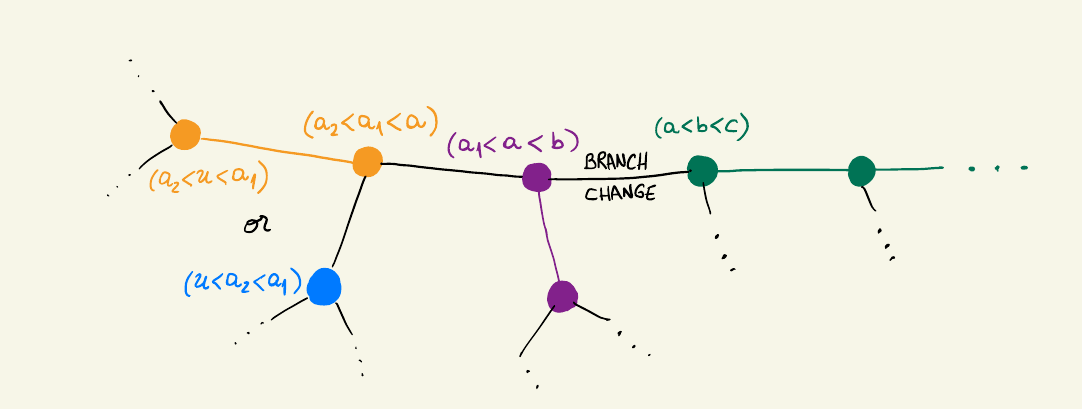}
\caption{Guiding diagram for Lemma \ref{lemma8}} 
\label{f2}
\end{figure}

\begin{proof}
See Figure \ref{f2} for a guide through this proof. 

Let $u=3a_2a_1-a_0$ and $v=3a_2u-a_1$. Based on the notation previously introduced, observe that $u$ plays the role of $a_3$ when $u<a_2$, and of $m_{i-1,2}$ for some $i$ otherwise. We proceed by analyzing $\min\{a_2,u\}$. First we study the cases having $\min\{a_2,u\}=1$.

\vspace{0.2cm}
\noindent 
\textbf{Case $a_2=1$:} We fall into the situation
{\scriptsize  $$(1<m_{i-1,2}<a_1)-(1<a_1<a_0)-(a_1<a<b)-(a<b<c)$$}

Following the conventions of \ref{s63}, we obtain the following chain of Wahl singularities at Flip 7 for the MMP on $\F_1  \rightsquigarrow  W_{a,b,c}$.

{\scriptsize $$\left[{a_0-w_{a_0} \choose w_{a_0}}\right]-(1)_{-}-\left[{a_1-w_{a_1} \choose w_{a_1}}\right]-(3)_{+}-\left[{6\choose 5}\right]-(0).$$}

 Note that the T-weight of $a_1$ in $(1<m_{i-1,2}<a_1)$ is equal to $w_{a_1}$, also the T-weight of $a_0$ in $(a_1<a<b)$ is $a-w_{a}$ and $m_{i-1,2}=w_{a}$. Therefore, this step corresponds precisely to Flip 5 of the MMP on $\F_1  \rightsquigarrow W_{a_1,a,b}$ but presented in reverse order.

\vspace{0.2cm}
\noindent 
\textbf{Case $u=1$:} We fall into the situation 

{\scriptsize $$
(1<a_2<a_1)-(a_2<a_1<a)-(a_1<a<b)-(a<b<c).$$}

Following the conventions of \ref{s64}, we get the following chain of Wahl singularities at Flip 8 of the MMP on $\F_1  \rightsquigarrow W_{a,b,c}$.

{\scriptsize $$\left[{a-a_2^2 \choose w_a-(a_2w_{a_2}+1)}\right]-(1)-\left[{(a-a_2^2)(3a_1-a_2)-a_2 \choose (w_a-(a_2w_{a_2}+1))(3a_1-a)-w_{a_2}}\right]-(1)_{-}-
(2)_{+}-\left[{a_1-v \choose (a_1-w_{a_1})-v}\right],$$}where $v=3a_2-a_1$. Since $u=1$, from the Markov equation and \cite[Lemma 5.5]{Per22} (see also Proposition \ref{prop}) we obtain the equations $a=a_2^2+a_1^2$ and $w_a=a_2w_{a_2}+a_1w_{a_1}$, which allow us to rewrite the chain as

{\scriptsize $$\left[{a_1^2 \choose a_1w_{a_1}-1}\right]-(1)-\left[{(3a_1-a_2)a_1^2-a_2 \choose (3a_1-a_2)(a_1w_{a_1}-1)-w_{a_2}}\right]-(1)_{-}-
(2)_{+}-\left[{a_1-v \choose (a_1-w_{a_1})-v}\right].$$}

By taking into account that the T-weights of $a_2$ and $a_1$ appearing in $(1<a_2<a_1)$ are $a_1-w_{a_1}$ and $a_2-w_{a_2}$ respectively, we deduce that this is identical to Flip 4 of the MMP on $\F_1  \rightsquigarrow W_{a_1,a,b}$ written in reverse order.

\vspace{0.3cm}
We now assume that $\min\{a,u\}>1$. Notice that the exceptional case $(v=1,u<a_2)$ is included in this category. However, we omit it from the remainder of the proof since we proved already stabilization to the MMP of $\F_1  \rightsquigarrow W_{a_1,a,b}$ in the Section \ref{s64}.

\bigskip
We are now in the MMP for general Markov triples having
{\scriptsize $$
(a_2<a_1<a)-(a_1<a<b)-(a<b<c).$$}

From the computations performed in \ref{s64}, we again treat the cases $u<a_2$ and $a_2<u$ independently. This refers to $(a_1<a<b)$ belonging to the first and second branch of $(a<b<c)$ respectively.

\vspace{0.2cm}
\noindent 
\textbf{Case $u<a_2$:} We note that Flip 8 of the MMP on $\F_1  \rightsquigarrow W_{a,b,c}$ is {\scriptsize $$\left[{a-ua_2^2 \choose w_a-u(a_2w_{a_2}+1)}\right]-(1)-\left[{(a-ua_2^2)(3a_1-a_2u)-a_2 \choose (w_a-u(a_2w_{a_2}+1))(3a_1-a_2u)-w_{a_2}}\right]-(1)_{-}-\left[{u^2 \choose u(u-w_{u})-1}\right]$$

$$\hspace{7.4cm}-(1)_{+}-\left[{a_1-vu^2 \choose (a_1-w_{a_1})-v(u(u-w_{u})-1)}\right].$$} By reversing the order of this chain, we obtain

\vspace{0.1cm}
{\scriptsize $$\hspace{-5.5cm}\left[{a_1-vu^2 \choose w_{a_1}-u(uw_{u}+1)}\right]-(1)_{+}-\left[{u^2 \choose uw_u+1}\right]-(1)_{-}-$$

$$\left[{(a-ua_2^2)(3a_1-a_2u)-a_2 \choose (a-w_{a})-u(a_2(a_2-w_{a_2})-1))(3a_1-a_2u)-(a_2-w_{a_2})}\right]-(1)-\left[{a-a_2a_1^2 \choose (a-w_{a})-u(a_2(a_2-w_{a_2})-1)}\right].$$}

The T-weight of $a$ in $(a_1<a<b)$ is $a-w_a$, and the corresponding of $a_2$ in $(u<a_2<a_1)$ is $a_2-w_{a_2}$. Thus, we infer that this chain of Wahl singularities is exactly Flip 4 of the MMP on $\F_1  \rightsquigarrow W_{a_1,a,b}$.

\vspace{0.2cm}
\noindent 
\textbf{Case $u>a_2$:} We note that Flip 7 of the MMP on $\F_1  \rightsquigarrow W_{a,b,c}$ is given by

{\scriptsize $$\left[{a-ua_2^2 \choose w_a-u(a_2w_{a_2}+1)}\right]-(1)-\left[{a_1-va_2^2 \choose w_{a_1}-v(a_2w_{a_2}+1)}\right]-(1)_{+}-\left[{M \choose Q}\right]-
(1)_{-}-\left[{a_2^2 \choose a_2w_{a_2}+1}\right] $$}

\vspace{0.1cm}
where

{\scriptsize $\bullet$ $M=((a-ua_2^2)(3a_1-a_2u)-a_2)-(3a-a_2a_1)(a_1-va_2^2),$}

\vspace{-0.15cm}
{\scriptsize $\bullet$ $Q=((w_a-u(a_2w_{a_2}+1))(3a_1-a_2u)-w_{a_2})-(3a-a_2a_1)(w_{a_1}-v(a_2w_{a_2}+1))$.}

\vspace{0.1cm}
As in the previous case, we proceed to write the chain in reverse order. Then,

{\scriptsize $$\left[{a_2^2 \choose a_2(a_2-w_{a_2})-1}\right]-(1)-\left[{M \choose M-Q}\right]-(1)_{+}-\left[{a_1-va_2^2 \choose (a_1-w_{a_1})-v(a_2(a_2-w_{a_2})-1)}\right]$$

$$\hspace{7cm}-(1)_{-}-\left[{a-a_2u^2 \choose w_{a_2}-u(a_2(a_2-w_{a_2})-1)}\right]$$}

\vspace{0.1cm}
where

{\scriptsize $\bullet$ $M-Q=((w_{a_2}-u(a_2(a_2-w_{a_2})-1))(3a_1-a_2)-(a_2-w_{a_2}))-(3a-a_2a_1)((a_1-w_{a_1})-u(a_2(a_2-w_{a_2})-1)).$}

\vspace{0.2cm}

In this case, the T-weights associated to $a_2$ and $a_1$ in $(a_2<u<a_1)$ are the same $w_{a_2}$ and $w_{a_1}$ of $(a_2<a_1<a_0)$ respectively. Thus, this chain of Wahl singularities corresponds exactly to Flip 5 of the MMP on $\F_1  \rightsquigarrow W_{a_1,a,b}$.
\end{proof}

\begin{lemma}
Let $(a=a_0<b=m_{0,0}<c=m_{1,0})$ be a Markov triple such that $3ab-c<a$. If the mutation $(a_1=3ab-c<a_0<m_{0,0})$ satisfies that $a_2=3a_1a-b>a_1$, then the MMP of $\F_1  \rightsquigarrow  W_{a,b,c}$ stabilizes to the MMP of $\F_1  \rightsquigarrow W_{a_1,a,b}$ in at most 12 flips.

\label{lemma12}
\end{lemma}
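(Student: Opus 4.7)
The plan is to mirror the proof of Lemma \ref{lemma8}: extend the explicit MMP computation of Section \ref{s64} for $\F_1 \rightsquigarrow W_{a,b,c}$ past Flip~8 and, at each subsequent step, compare the resulting chain of Wahl singularities with a chain appearing in the MMP of $\F_1 \rightsquigarrow W_{a_1, a, b}$, possibly after reversing its orientation. As soon as the two chains agree, both MMPs proceed identically from that step onward, which is precisely the stabilization we want.

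The structural reason the bound jumps from $8$ (Lemma \ref{lemma8}) to $12$ here is the following. With $a_2 > a_1$, the triple $(a_1 < a_0 < m_{0,0})$ lies in the interior of an $a_1$-branch rather than at its starting vertex (as in Lemma \ref{lemma8} under $a_2 < a_1$). Thus, after the renaming $c' := a_1$, the triple $(a_1, a, b)$ plays the role of $(c' < m_{k,1} < m_{k+1,1})$ with $k \geq 1$, and the MMP of $W_{a_1, a, b}$ follows the ``middle-branch'' recipe of Section \ref{s64}, which requires two extra initial flips before reaching the chain at Flip~2 of the ``initial-vertex'' case. Matching these two extra flips on the $W_{a_1, a, b}$ side translates, via the extremal-neighborhood recipes of Section \ref{s4}, into four additional flips on the $W_{a,b,c}$ side, which accounts for the bound $12 = 8 + 4$.

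The case analysis follows the same pattern as in Lemma \ref{lemma8}: depending on whether the auxiliary Markov numbers $a_2$, $u = 3ab - c$, or $v = 3au - b$ degenerate to small values (placing us in the Fibonacci-- or Pell--type sub-situations of Sections \ref{s61}--\ref{s62}), the match with a $W_{a_1, a, b}$-chain is reached after different numbers of flips, and the relevant Markov identities (such as $1 + u^2 + a_1^2 = 3 u a_1$ when $u = 1$) must be used to simplify the intermediate Wahl data. The main obstacle is the bookkeeping across the additional flips: for each sub-case one must verify that the Wahl fractions of the form $[n^2/(n(n-w)+1)]$ produced on the $W_{a,b,c}$ side coincide, after reversal, with those on the $W_{a_1, a, b}$ side, and this relies on the Markov equation together with Proposition \ref{prop}. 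Optimality of the bound $12$ is witnessed by explicit examples where the stabilizing flip occurs precisely at the 12th step, such as the one obtained by tracking $(5, 29, 433)$ through the MMP in the introduction.
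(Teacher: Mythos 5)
Your general plan --- continue the explicit flip computation of Section \ref{s64} for $\F_1 \rightsquigarrow W_{a,b,c}$ and compare step by step with the MMP of $\F_1 \rightsquigarrow W_{a_1,a,b}$ --- is the right spirit, but the step that actually produces the bound $12$ is missing. The paper does not push the computation for $W_{a,b,c}$ beyond Flip 8 and match it directly against the interior-vertex ($k\ge 1$) recipe for $W_{a_1,a,b}$; instead it introduces the auxiliary triple $(m_{0,1}<m_{1,1}<m)$ with $m:=3m_{0,1}m_{1,1}-a_1$, i.e.\ the initial vertex of the $m_{0,1}$-branch adjacent to the initial vertex $(a_1<m_{0,1}<m_{1,1})$ of the $a_1$-branch. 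Using the Markov equation and the T-weight identities, one checks that the chain of Wahl singularities at Flip 8 (Flip 6 in the $f=1$ exception) of the MMP of $W_{a,b,c}$ coincides with the chain at Flip 4 (resp.\ Flip 6) of the MMP of $W_{m_{0,1},m_{1,1},m}$; since this auxiliary triple satisfies the hypotheses of Lemma \ref{lemma8}, that lemma supplies the remaining $2$, $3$ or $4$ flips needed to reach the MMP of $W_{a_1,m_{0,1},m_{1,1}}$, whence the totals $8$, $11$, or $12$, the worst case $12=8+4$ occurring for $1<f<a_2$ via the $u<a_2$ sub-case of Lemma \ref{lemma8}. Your accounting --- that the two extra initial flips of the $k\ge 1$ recipe on the $W_{a_1,a,b}$ side ``translate into four additional flips'' on the $W_{a,b,c}$ side --- is asserted without any mechanism, and it is not how the extra flips arise; without the auxiliary-triple identification (or an equivalent batch of new explicit flip computations past Flip 8, which you do not carry out) there is no argument that stabilization occurs at all, let alone within $12$ steps.

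Two further inaccuracies. The relevant case division for this lemma is on $\min\{a_2,f\}$ with $f:=3a_2a_1-m_{0,1}$, not on $u=3ab-c$ and $v=3au-b$; in the present setting $u$ is simply $a_1$, so ``$u$ small'' is not a meaningful degeneration here. And your optimality witness fails: the lemma claims only an upper bound, the triple $(5,29,433)$ has $a_2=3\cdot 2\cdot 5-29=1<2=a_1$ and hence falls under Lemma \ref{lemma8} rather than this lemma, and its MMP displayed in the introduction terminates after $9$ flips, not $12$.
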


\begin{figure}[htbp]
\centering
\includegraphics[width=12cm]{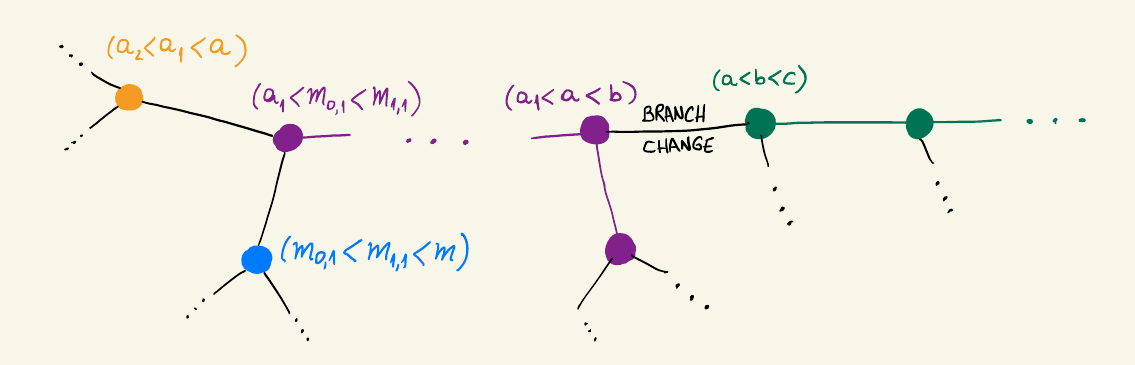}
\caption{Guiding diagram for Lemma \ref{lemma12}} 
\label{f3}
\end{figure}

\begin{proof}
See Figure \ref{f3} for a guide through this proof. 

Using the notation introduced at the beginning of this section, we have $a_2=3a_1m_{0,1}-m_{1,1}$ and we define $f:=3a_2a_1-m_{0,1}$. The proof proceeds by cases, similar to Lemma \ref{lemma8}, depending on the minimum of $\{a_2,f\}$. The key element of this proof is the stabilization provided for the MMP on $\F_1  \rightsquigarrow W_{m_{0,1},m_{1,1},3m_{0,1}m_{1,1}-a_1}$ from Lemma \ref{lemma8}. For convenience, we define $m:=3m_{0,1}m_{1,1}-a_1$. 

\vspace{0.2cm}
We proceed with the cases where $\min\{a_2,f\}=1$.

\vspace{0.2cm}
\noindent 
\textbf{Case $a_2=1$:} The computations in \ref{s64}  let us infer that Flip 8 of the MMP on $\F_1  \rightsquigarrow W_{a,b,c}$ is given by

{\scriptsize $$\hspace{-0.15cm}\left[{m_{0,1}-f \choose m_{0,1}-w_{m_{0,1}}}\right]-(2)_{+}-(1)_{-}-\left[{(m_{1,1}-a_1^2)(3m_{0,1}-a_1)-a_1 \choose ((m_{1,1}-w_{m_{1,1}})-(a_1(a_1-w_{a_1})-1))(3m_{0,1}-a_1)-(a_1-w_{a_1})}\right]$$
$$\hspace{6.45cm}-(1)-\left[{m_{1,1}-a_1^2 \choose (m_{1,1}-w_{m_{1,1}})-(a_1(a_1-w_{a_1})-1)}\right],$$} where $w_{m_{0,1}}$ and $w_{m_{1,1}}$ are the corresponding T-weights in $(a_1<m_{0,1}<m_{1,1})$. Note that in the present situation, one obtains  $m_{1,1}=a_1^2+m_{0,1}^2$ and $w_{m_{1,1}}=m_{0,1}w_{m_{0,1}}+a_1w_{a_1}$. Thus, the chain becomes
{\scriptsize $$\left[{m_{0,1}-f \choose m_{0,1}-w_{m_{0,1}}}\right]-(2)_{+}-(1)_{-}-\left[{(3m_{0,1}-a_1)m_{0,1}^2-a_1 \choose (3m_{0,1}-a_1)(m_{0,1}(m_{0,1}-w_{m_{0,1}})+1)-(a_1-w_{a_1})}\right] \ \ \ \ \ \ \ \ \ \ \ \ \ \ \ \ \ \ \ \ 
$$

$$\hspace{7.9cm}-(1)-\left[{m_{0,1}^2 \choose m_{0,1}(m_{0,1}-w_{m_{0,1}})+1}\right].$$}

Since the T weights of $a_1$ and $m_{0,1}$ in $(1<a_1<m_{0,1})$ are $a_1-w_{a_1}$ and $m_{0,1}-w_{m_{0,1}}$ respectively, we observe that the latter chain is exactly the same as Flip 4 of the MMP on $\F_1  \rightsquigarrow W_{m_{0,1},m_{1,1},m}$. By the ($a_2=1$) case of Lemma \ref{lemma8}, we know we ought to compute three flips more to reach Flip 5 of the MMP on $\F_1  \rightsquigarrow W_{a_1,m_{0,1},m_{1,1}}$.

\vspace{0.2cm}
\noindent 
\textbf{Case $f=1$:} We fall into the exception described in the footnote at the end of \ref{s64}. In this situation, we deduced that Flip 6 of the MMP on $\F_1  \rightsquigarrow W_{a,b,c}$ coincides to Flip 6 of the MMP on $\F_1  \rightsquigarrow W_{m_{0,1},m_{1,1},m}$. Then, by the general form of Lemma \ref{lemma8}, we compute two flips more to reach Flip 4 of the MMP on $\F_1  \rightsquigarrow W_{a_1,m_{0,1},m_{1,1}}$.

\vspace{0.2cm}
Now, we suppose that $1<\min\{a_2,f\}$. In this case we have that Flip 8 of the MMP on $\F_1  \rightsquigarrow W_{a,b,c}$ is given by

{\scriptsize $$\hspace{-3.7cm}\left[{m_{0,1}-fa_2^2 \choose (m_{0,1}-w_{m_{0,1}})-f(a_2w_{a_2}+1)}\right]-(1)_{+}-\left[{a_2^2 \choose a_2w_{a_2}+1 }\right]-(1)_{-}-$$
$$\left[{(m_{1,1}-a_2a_1^2)(3m_{0,1}-a_2a_1)-a_1 \choose ((m_{1,1}-w_{m_{1,1}})-a_2(a_1(a_1-w_{a_1})-1))(3m_{0,1}-a_1a_2)-(a_1-w_{a_1})}\right]-(1)-$$

$$\hspace{6.5cm}\left[{m_{1,1}-a_2a_1^2 \choose (m_{1,1}-w_{m_{1,1}})-a_2(a_1(a_1-w_{a_1})-1)}\right].$$}

where $w_{m_{0,1}}$ and $w_{m_{1,1}}$ are the corresponding T-weights in $(a_1<m_{0,1}<m_{1,1})$, also $w_{a_2}$ is taken in $(a_2<a_1<m_{0,1})$. Observe that in the triple $(a_2<a_1<m_{0,1})$, $a_1$ and $m_{0,1}$ have T-weights $a_1-w_{a_1}$ and $m_{0,1}-w_{m_{0,1}}$ respectively. Therefore, the latter chain corresponds to Flip 4 of the MMP on $\F_1  \rightsquigarrow W_{m_{0,1},m_{1,1},m}$.

\vspace{0.2cm}
Since the triple $(m_{0,1},m_{1,1},m)$ is in the general form of Lemma \ref{lemma8}, we fall into the pair of situations described in the proof of the lemma. If $f<a_2$ we shall perform four more flips to reach Flip 4 of the MMP on $\F_1  \rightsquigarrow W_{a_1,m_{0,1},m_{1,1}}$. Similarly, if $a_2<f$ we shall perform three more flips to reach Flip 5 of the MMP on $\F_1  \rightsquigarrow W_{a_1,m_{0,1},m_{1,1}}$. 
\end{proof}

\bigskip
With these two lemmas we have proved Theorem \ref{changes-branches}. To end this section we proceed to prove Theorem \ref{complete-MMP} which consists of bounding the number of flips for a given triple $(a<b<c)$ in terms of their branch changes required to reach a triple in the Fibonacci branch. For a Markov triple $(1<a<b<c)$, we define $F(a,b,c):=\# \textit{ flips of the MMP on } \F_1  \rightsquigarrow  W_{a,b,c}$. In the following corollaries, we combine the results of the Lemmas \ref{lemma8} and \ref{lemma12} to compute the number of flips after the change of two branches in general form.

\begin{corollary}
Let us consider a Markov triple $(a=a_0<b=m_{0,0}<c=m_{1,0})$ that satisfies the conditions of Lemma \ref{lemma8}. We obtain one of the following changes in branches:
$$(a_3<m_{k_3,3}<m_{k_3+1,3})\to(a_2<m_{0,2}<m_{1,2})\to (a_1<m_{0,1}<m_{1,1})$$
or
$$(a_2<m_{k_2-1,2}<m_{k_2,2})\to(a_2<m_{k_2,2}<m_{k_2+1,2})\to (a_1<m_{0,1}<m_{1,1}).$$

Assuming that $(a_2<m_{0,2}<m_{1,2})$ belongs to the general situations of either Lemma \ref{lemma8} or Lemma \ref{lemma12}, we deduce the following:

\begin{itemize}
    \item If $k_2=0$, then $$F(a,b,c)=F(a_1,m_{0,1},m_{1,1})+4=F(a_2,m_{0,2},m_{1,2})+l,$$ where $l\in\{6,8\}$ is determined by whether $(a_3<m_{k_3,3}<m_{k_3+1,3})$ has $k_3>0$ or $k_3=0$, respectively. 
\vspace{0.1cm}
    \item If $k_2>0$, then 
    $$F(a,b,c)= F(a_1,m_{0,1},m_{1,1})+2=F(a_2,m_{0,2},m_{1,2})+l,$$
    where $l\in \{8,10\}$ is determined by whether $(a_3=3a_1a_2-a<m_{k_3,3}=a_2<m_{k_3+1,3}=m_{0,2})$ has $k_3>0$ or $k_3=0$, respectively.
\end{itemize}
\label{cor8}
\end{corollary}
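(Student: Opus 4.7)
The plan is to iterate Lemma \ref{lemma8} (and Lemma \ref{lemma12} when needed) twice, identifying at each step which sub-case is active and matching it with the combinatorial condition on $k_2$ and $k_3$ via the Markov tree.

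First I apply Lemma \ref{lemma8} to $(a,b,c)$. Its proof distinguishes two general sub-cases governed by $u:=3a_2a_1-a$: if $u<a_2$, stabilization is at Flip $8$ of $W_{a,b,c}$ matching Flip $4$ of $W_{a_1,a,b}$, giving $F(a,b,c)=F(a_1,a,b)+4$; if $u>a_2$, stabilization is at Flip $7$ matching Flip $5$ (in reverse) of $W_{a_1,a,b}$, giving $F(a,b,c)=F(a_1,a,b)+2$. The hypothesis $a_2<a_1$ forces $(a_1,a,b)$ to be the initial vertex of the branch of $a_1$, since its parent in the Markov tree is $(a_2,a_1,a)$ with strictly smaller smallest coordinate, so $(a_1,a,b)=(a_1,m_{0,1},m_{1,1})$. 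Moreover, $u<a_2$ is equivalent to $k_2=0$ and $u>a_2$ to $k_2>0$: in the former the parent $(u,a_2,a_1)$ of $(a_2,a_1,a)$ lies outside $a_2$'s branch so $(a_2,a_1,a)=(a_2,m_{0,2},m_{1,2})$; in the latter the branch recurrence $m_{k+1,2}=3a_2m_{k,2}-m_{k-1,2}$ gives $u=m_{k_2-1,2}$ with $k_2\geq 1$.

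In the second iteration, if $k_2=0$ then Lemma \ref{lemma8} applies again to $(a_1,m_{0,1},m_{1,1})$, whose new discriminating parameter is $v:=3a_2u-a_1$. Its sub-cases $v<u$ and $v>u$ yield $F(a_1,m_{0,1},m_{1,1})=F(a_2,m_{0,2},m_{1,2})+4$ or $+2$ respectively, producing $l=8$ or $l=6$ when combined with Step 1; the same Markov tree argument applied one level lower identifies $v<u$ with $k_3=0$ and $v>u$ with $k_3>0$. If $k_2>0$ then Lemma \ref{lemma12} applies to $(a_1,m_{0,1},m_{1,1})$: in its general case Flip $8$ matches Flip $4$ of $W_{m_{0,2},m_{1,2},m}$ with $m=3m_{0,2}m_{1,2}-a_2$, and the hypothesis that $(a_2,m_{0,2},m_{1,2})$ lies in a general situation guarantees that the internal application of Lemma \ref{lemma8} to $(m_{0,2},m_{1,2},m)$ is non-exceptional. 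Its sub-cases contribute $4$ or $3$ additional flips, giving $F(a_1,m_{0,1},m_{1,1})=F(a_2,m_{0,2},m_{1,2})+8$ or $+6$, and thus $l=10$ or $l=8$ after combining with Step 1; these two sub-cases are matched with $k_3=0$ or $k_3>0$ for the triple $(a_3,a_2,m_{0,2})$ where $a_3=3a_1a_2-a$.

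The main technical obstacle will be the bookkeeping that relates the discriminating parameters $u$, $v$, $f$ appearing in the proofs of Lemmas \ref{lemma8} and \ref{lemma12} to the position indices $k_2$ and $k_3$ in the Markov branches, which is done via the branch recurrences for $(a,b,c)$, $(a_1,m_{0,1},m_{1,1})$ and $(a_2,m_{0,2},m_{1,2})$. The ``general situations'' hypothesis is essential precisely to exclude the exceptional sub-cases ($a_2=1$, $u=1$, $v=1$, $f=1$) treated separately inside the proofs of Lemmas \ref{lemma8} and \ref{lemma12}, where the stabilization counts shift and extra branches have to be considered.
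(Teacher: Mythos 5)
Your proposal is correct and follows exactly the route the paper intends: the corollary is obtained by applying Lemma \ref{lemma8} to $(a,b,c)$ (sub-cases $u<a_2$ and $u>a_2$ giving $+4$ and $+2$, identified with $k_2=0$ and $k_2>0$ via the branch recurrence), and then applying Lemma \ref{lemma8} or Lemma \ref{lemma12} to $(a_1,m_{0,1},m_{1,1})$, whose general sub-cases contribute the remaining $+4/+2$ or $+8/+6$ and are matched with $k_3=0$ or $k_3>0$, the ``general situations'' hypothesis serving precisely to rule out the exceptional sub-cases ($a_2=1$, $u=1$, $v=1$, $f=1$) where the flip counts shift. The only caveat is that you echo the statement's formula $a_3=3a_1a_2-a$ in the $k_2>0$ case, which is a notational slip inherited from the corollary itself (there $3a_1a_2-a=m_{k_2-1,2}$, while the relevant descent label is $a_3=3a_2m_{0,2}-m_{1,2}$, and the sub-case discrimination is via $3a_3a_2-m_{0,2}$ versus $a_3$, exactly as your bookkeeping indicates).
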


\vspace{0.2cm}
\begin{corollary}
Let us consider a Markov triple $(a=a_0<b=m_{0,0}<c=m_{1,0})$ that satisfies the conditions of Lemma \ref{lemma12}. Let $m:=3m_{0,1}m_{1,1}-a_1$, we obtain one of the following changes in branches: $$(a_2<m_{0,2}<m_{1,2})\to (a_1<m_{0,1}<m_{1,1})\to (m_{0,1}<m_{1,1}<m)$$
or
$$(a_2<m_{k_2,2}<m_{k_2+1,2})\to (a_1<m_{0,1}<m_{1,1})\to (m_{0,1}<m_{1,1}<m).$$

Assuming that $(a_2<m_{0,2}<m_{1,2})$ belongs to the general situations of either Lemma \ref{lemma8} or Lemma \ref{lemma12}, we deduce the following:

\begin{itemize}
    \item If $k_2=0$, then $$F(a,b,c)=F(a_1,m_{0,1},m_{1,1})+8=F(a_2,m_{0,2},m_{1,2})+l$$ where $l\in\{10,12\}$ is determined by whether $(a_3<m_{k_3,3}<m_{k_3+1,3})$ has $k_3>0$ or $k_3=0$, respectively. 
\vspace{0.1cm}
    \item If $k_2>0$, then 
    $$F(a,b,c)= F(a_1,m_{0,1},m_{1,1})+6=F(a_2,m_{0,2},m_{1,2})+l$$
    where $l\in \{12,14\}$ is determined by whether $(a_3=3a_1a_2-a<m_{k_3,3}=a_2<m_{k_3+1,3}=m_{0,2})$ has $k_3>0$ or $k_3=0$, respectively.
\end{itemize}
\label{cor12}
\end{corollary}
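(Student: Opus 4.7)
The plan is to combine Lemma \ref{lemma12} applied directly to $(a,b,c)$ with a second application of either Lemma \ref{lemma8} or Lemma \ref{lemma12} at the next level, and to add the contributions to $F$.

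At the first stage, Lemma \ref{lemma12} applies to $(a,b,c)$ by hypothesis. Its proof identifies Flip $12$ of the MMP on $\F_1\rightsquigarrow W_{a,b,c}$ with Flip $4$ of the MMP on $\F_1\rightsquigarrow W_{a_1,m_{0,1},m_{1,1}}$ in the sub-case $f<a_2$, and Flip $11$ with Flip $5$ in the sub-case $a_2<f$, where $f:=3a_2a_1-m_{0,1}$ is the auxiliary quantity introduced there. Converting these stabilization points into flip-count equalities yields $F(a,b,c)=F(a_1,m_{0,1},m_{1,1})+8$ in the first sub-case and $F(a,b,c)=F(a_1,m_{0,1},m_{1,1})+6$ in the second. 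Since $(a_2,a_1,m_{0,1})$ coincides with the initial vertex $(a_2,m_{0,2},m_{1,2})$ of the $a_2$-branch precisely when $3a_2a_1-m_{0,1}<a_2$, the dichotomy $f\lessgtr a_2$ matches exactly the condition $k_2=0$ versus $k_2>0$, establishing the first equality in both cases.

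For the second equality I apply the appropriate lemma to the initial triple $(a_1,m_{0,1},m_{1,1})$ of the $a_1$-branch. If $k_2=0$ then $f<a_2$ is exactly the Lemma \ref{lemma8} hypothesis for this triple, and its proof contributes $+4$ to $F(a_1,m_{0,1},m_{1,1})-F(a_2,m_{0,2},m_{1,2})$ in the sub-case ``$u<a_2$'' and $+2$ in the sub-case ``$u>a_2$'', where $u$ is the corresponding auxiliary quantity. If $k_2>0$ then $f>a_2$, so Lemma \ref{lemma12} applies to $(a_1,m_{0,1},m_{1,1})$ and contributes $+8$ or $+6$ in its two analogous sub-cases. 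By the same argument used at the first stage, the sub-case in which the auxiliary quantity is less than the corresponding ``$a_2$''-slot of the applied lemma (which now equals $a_3$ in the branch notation) corresponds to $k_3=0$, and the opposite sub-case to $k_3>0$. Adding the stage-one and stage-two contributions yields $l=8+4=12$ when $(k_2,k_3)=(0,0)$, $l=8+2=10$ when $(k_2,k_3)=(0,{>}0)$, $l=6+8=14$ when $(k_2,k_3)=({>}0,0)$, and $l=6+6=12$ when $(k_2,k_3)=({>}0,{>}0)$, which are exactly the values claimed.

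The main obstacle is purely notational bookkeeping: the internal symbols $a_2,u,f$ appearing inside the proofs of Lemmas \ref{lemma8} and \ref{lemma12} must be identified with explicit quantities in the uniform branch notation $a_i,m_{k,i}$ of Section \ref{s65}, and at each stage one has to check that the sub-case dichotomy inside the applied lemma corresponds exactly to the position $k_{i+2}$ of the newly iterated triple in its branch. Once these identifications are fixed, the proof assembles additively from flip counts already performed in Lemmas \ref{lemma8} and \ref{lemma12}, with no new birational computations required.
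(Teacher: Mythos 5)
Your proposal is correct and is exactly the argument the paper intends: the corollary is stated as a direct combination of Lemmas \ref{lemma8} and \ref{lemma12} (the paper writes no separate proof), and you assemble it the same way — Lemma \ref{lemma12} at the first stage giving $+8$ or $+6$ according to $f<a_2$ (equivalently $k_2=0$) or $f>a_2$ ($k_2>0$), then Lemma \ref{lemma8} or \ref{lemma12} applied to $(a_1,m_{0,1},m_{1,1})$ giving $+4/+2$ or $+8/+6$ according to $k_3$, with the sums $10,12,12,14$ as claimed. The flip identifications you quote (Flip $12\leftrightarrow4$, $11\leftrightarrow5$, $8\leftrightarrow4$, $7\leftrightarrow5$) and the translation of the internal sub-case dichotomies into $k_2,k_3$ are all accurate, the corollary's "general situations" hypothesis being precisely what rules out the exceptional ($=1$) sub-cases you implicitly avoid.
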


\vspace{0.2cm}
From our calculations, the most effective strategy to construct a triple $(a<b<c)$ such that the MMP of $\F_1  \rightsquigarrow W_{a,b,c}$ has the greatest number of flips is by taking Corollary \ref{cor12} with $k_2>0$ iteratively. Indeed, this is accomplished by making every general branch change to add six flips, until we reach a triple $(a_t,m_{0,t},m_{1,t})$ such that $a_{t+2}=1$. This triple will have an MMP $\F_1  \rightsquigarrow W_{a_t,m_{0,t},m_{1,t}}$ that is a particular case of the preceding lemmas. Under this approach, the subsequent proposition provides us an effective upper limit for the number of flips in the total MMP.

\vspace{0.2cm}
\begin{proposition}
Let $(1<a=a_0<b=m_{i,0}<c=m_{i+1,0})$ be a Markov triple with $\nu$ branch changes to reach $a_{\nu}=1$. Then $F(a,b,c)$ is bounded by $6\nu +3 $. The bound is optimal for infinitely many triples. 

\label{estimacion}
\end{proposition}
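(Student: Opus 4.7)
The plan is to prove the bound $F(a,b,c) \leq 6\nu+3$ by induction on $\nu$, using the recursive identities furnished by Corollaries \ref{cor8} and \ref{cor12}, and then exhibit an infinite family realizing the bound by iterating Corollary \ref{cor12} in its $k_2>0$ regime.

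For the base case $\nu=1$ we have $a_1=1$, so $(a_1,m_{0,1},m_{1,1})$ lies on a branch of the form $(1<b<c)$. The two explicit MMPs of Section \ref{s63} each terminate in at most $9$ flips, giving $F(a,b,c)\leq 9=6\cdot 1+3$. For the inductive step, assume the bound for all triples with fewer than $\nu$ branch changes. If Lemma \ref{lemma8} applies then Corollary \ref{cor8} gives $F(a,b,c)=F(a_1,m_{0,1},m_{1,1})+c_1$ with $c_1\in\{2,4\}$, and the inductive hypothesis yields $F(a,b,c)\leq 6(\nu-1)+3+4=6\nu+1<6\nu+3$. If Lemma \ref{lemma12} applies with $k_2>0$, then $c_1=6$ and the inductive hypothesis directly yields $F(a,b,c)\leq 6\nu+3$.

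The delicate case is Lemma \ref{lemma12} with $k_2=0$, where a naive one-step induction would give $6(\nu-1)+3+8=6\nu+5$. The main obstacle here, and the reason the proposition is nontrivial, is to find an amortization that absorbs this excess over two branch changes. The cure is built into Corollary \ref{cor12}: the same corollary asserts $F(a,b,c)=F(a_2,m_{0,2},m_{1,2})+l$ with $l\in\{10,12\}$, where $(a_2,m_{0,2},m_{1,2})$ has exactly $\nu-2$ branch changes (requiring $\nu\geq 2$; the corner $\nu=2$ uses $F(1,m_{0,2},m_{1,2})=3$ from Section \ref{s61}). Applying the inductive hypothesis to $(a_2,\ldots)$ yields $F(a,b,c)\leq 6(\nu-2)+3+12=6\nu+3$. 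In effect, whenever a step costs $8$, the neighbouring step is constrained by Corollary \ref{cor12} to cost at most $4$, so any two consecutive branch changes together cost at most $12=6\cdot 2$.

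For the optimality statement, I would follow the recipe sketched in the paragraph preceding the proposition and build an infinite family $\{T_\nu\}_{\nu\geq 1}$ of Markov triples with $F(T_\nu)=6\nu+3$ inductively. Take $T_1$ to be any triple of the form $(c<m_0<m_1)$ lying in the first branch of a triple $(1<b<c)$ of Section \ref{s63}, for which the explicit MMP shows $F(T_1)=9$. Given $T_\nu=(a_0^{(\nu)},b^{(\nu)},c^{(\nu)})$ achieving $F(T_\nu)=6\nu+3$, define $T_{\nu+1}$ as a triple in the adjacent branch of smaller minimum for which Lemma \ref{lemma12} applies and the index $k_2$ in the previous level is positive; such a triple exists because, in the Markov tree, once a branch change of Lemma \ref{lemma12} type is available one can always choose to start the next branch not at its initial vertex $(\cdot,m_{0,\cdot},m_{1,\cdot})$ but further along, forcing $k_2>0$. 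By the $c_1=6$ branch of Corollary \ref{cor12} this gives $F(T_{\nu+1})=F(T_\nu)+6=6(\nu+1)+3$, completing the induction and exhibiting the required infinite family.
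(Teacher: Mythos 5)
Your amortization idea (an $8$-flip change forces the next change to cost at most $4$, so two consecutive changes cost at most $12$) is indeed the mechanism behind the paper's count, but as written your induction has a genuine gap concerning the position of the triple on its branch. Lemmas \ref{lemma8}, \ref{lemma12} and Corollaries \ref{cor8}, \ref{cor12} apply only to the \emph{initial vertex} of a branch (the hypothesis $3ab-c<a$, i.e.\ $b=m_{0,0}$, $c=m_{1,0}$), whereas the proposition concerns $(a<m_{i,0}<m_{i+1,0})$ with arbitrary $i$; a triple with $i\geq 1$ needs $2$ extra flips to stabilize to the MMP of the initial vertex (this is the term $2(1-\delta_{0,i})$ in the paper's estimate). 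Your induction establishes $6\nu+3$ only at initial vertices, so for mid-branch triples it would yield $6\nu+5$. The repair is to prove the sharper statement $F\leq 6\nu+1$ at initial vertices (your three cases do support this: $6(\nu-1)+1+4$, $6(\nu-1)+1+6$, $6(\nu-2)+1+12$ are all $\leq 6\nu+1$) and add $2$ only at the top — but you never make this refinement, and it is exactly what is at stake, since the extremal examples are mid-branch. A second gap: the corollaries carry the hypothesis that the lower triple lies in the \emph{general} situations of Lemma \ref{lemma8} or \ref{lemma12}; this fails precisely near the bottom of the descent (the exceptional cases $a_2=1$, $u=1$, $v=1$, $f=1$, $(e,a)=(1,2)$), which is where the paper anchors the count with the ``at most $13$ flips'' statement. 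Your brief remark about the $\nu=2$ corner does not cover these cases, so the last one or two steps of your induction are unjustified.

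The optimality construction is also inconsistent with the corollary you invoke. To get $F(T_{\nu+1})=F(T_\nu)+6$ from the $k_2>0$ case of Corollary \ref{cor12}, the triple $T_{\nu+1}$ must be the initial vertex of its branch and $T_\nu$ must be the initial vertex $(a_1<m_{0,1}<m_{1,1})$ of the next branch down; by induction all your $T_\nu$ are then initial vertices, and initial vertices satisfy $F\leq 6\nu+1$, so they cannot realize $F(T_\nu)=6\nu+3$ as you claim. In the paper the final $+2$ comes from taking only the top triple at position $i>0$ (condition (3) of the maximizer), as the example with $19=6\cdot 3+1$ versus $21=6\cdot 3+3$ flips illustrates. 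Your base witness is also off: a triple on the first branch of $(1<b<c)$ treated in Section \ref{s63} has descent $c\mapsto b\mapsto 1$, hence $\nu=2$ with $F=9<15$; the triples with $\nu=1$ and $F=9$ are the mid-branch ($k\geq 1$) triples of the other branch, which are not initial vertices and therefore cannot feed your $+6$ recursion. So the correct construction builds a chain of initial vertices with all changes of cost $6$, reaching $6\nu+1$, and then moves one step along the top branch to gain the last $2$.
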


\begin{proof}
For $\nu=1$, the bound is direct from the computations in the Sections \ref{s62} and \ref{s63}. For $\nu \geq 2$, we make the estimation of $F(a,b,c)$ by cases. 

\bigskip 
\textbf{Case 1:} Suppose that $\nu-1$ branch change results in a triple $(a_{\nu-1}<m_{k_{\nu-1},\nu-1}<m_{k_{\nu-1}+1,\nu-1})$ having $j>0$. This is obtained from the mutation of a triple $(a_{\nu-2}<m_{0,\nu-2}<m_{1,\nu-2})$. From the case $(a_2=1)$ of Lemma \ref{lemma12}, we infer that the MMP of $\F_1  \rightsquigarrow  W_{a_{\nu-2},m_{0,\nu-2},m_{1,\nu-2}}$ consists of at most 13 flips.

\vspace{0.1cm}
 Taking into account the number of branch changes in the general form of Lemma \ref{lemma8}, we denote $F_{1,1}$ as the number of times where the condition $(u<a_2)$ is satisfied. Similarly, we denote $F_{1,2}$ as the number of instances where the condition $(a_2<u)$ holds. Under the same reasoning for the branch changes in the general form of Lemma \ref{lemma12}, we define $F_{2,1}$ and $F_{2,2}$ accordingly.
 We observe that $\nu-2=F_{1,1}+F_{2,1}+F_{2,1}+F_{2,2}$. Additionally, note that under the notation above, the triple $(a_{\nu-2}<m_{0,\nu-2}<m_{1,\nu-2})$ is reached by a change $1,2$ or $2,2$ from $(a_{\nu-3}<m_{0,\nu-3}<m_{1,\nu-3})$. Thus, $\min\{F_{1,2},F_{2,2}\}>1$.

\vspace{0.2cm}
We obtain the formula,
\begin{equation}
F(a,b,c)\leq 13+2F_{1,2}+4F_{1,1}+6F_{2,2}+8F_{2,1}+2(1-\delta_{0,i}),
\end{equation}
where $\delta_{i,0}$ is the Kronecker delta. The term $2(1-\delta_{0,i})$ arises from the fact that $(a_0,m_{i,0},m_{i+1,0})$ may have $i>0$. Since the the last general branch change is not of the form 2,1, we can infer through successive application of Corollary \ref{cor12} that for a fixed $\nu$, $F$ is maximized by a triple $(a_0,m_{i,0},m_{i+1,0})$ that satisfies the following conditions:
\begin{enumerate}
    \item $a_{\nu-1}>2$. 
    \item $\nu-2=F_{2,2}$.
    \item $i>0$.
\end{enumerate}
In such case, we obtain $F(a,b,c)=15+6(\nu-2)=6\nu+3$.

\bigskip 
\textbf{Case 2:} Now, consider the case where the $\nu-1$ branch change falls into the triple $(a_{\nu-1},m_{0,\nu-1},m_{1,\nu-1})$.

\vspace{0.1cm}
If $\nu=2$, the computations in Section \ref{s63} let us infer that $F(a,b,c)=9$. For $\nu \geq 3$, the MMP of $\F_1  \rightsquigarrow  W_{a_{\nu-3},m_{0,\nu-3},m_{1,\nu-3}}$ consists of at most 13 flips. Similarly to the previous case, we again have that $(a_{\nu-3}<m_{0,\nu-3}<m_{1,\nu-3})$ is reached by a change $1,2$ or $2,2$ from $(a_{\nu-4}<m_{0,\nu-4}<m_{1,\nu-4})$.

\vspace{0.1cm}
Avoiding the situation where $(a_{\nu-4}<m_{0,\nu-4}<m_{1,\nu-4})$ attains the condition $(v=1,a_2<u)$  as in Lemma \ref{lemma8}, the number of branch changes in general form satisfy $\nu-3=F_{1,1}+F_{2,1}+F_{2,1}+F_{2,2}$ and the inequality $(7.1)$ holds as well.

\vspace{0.1cm}
In this situation and with a fixed $\nu$, $F$ is maximized by a triple $(a_0,m_{i,0},m_{i+1,0})$ that satisfies the following conditions:
\begin{enumerate}
    \item $a_{\nu-2}>5$. 
    \item $\nu-3=F_{2,2}$.
    \item $i>0$.
\end{enumerate}
In such case,we derive that $F(a,b,c)=15+6(\nu-3)=6\nu-3$. Given that $F$ is not maximized through the case $(v=1)$, as it only contributes two additional flips compared to $(a_{\nu-3},m_{0,\nu-3},m_{1,\nu-3})$, the proposition follows from Case 1.
\end{proof}

In summary, Lemmas \ref{lemma8} and \ref{lemma12} demonstrate that the number of flips will approach infinity if and only if $\nu$ does as well. The last proposition presents the optimal bound as outlined in Theorem \ref{complete-MMP}. With these points in consideration, we can affirm the proof of this theorem.


\begin{example}

The approach outlined in Proposition \ref{estimacion} for determining the maximum number of flips in the Markov tree is by choosing a \say{zigzag} path starting with $(1<b<c)$ where $b\geq 5$. This is: $$ (1<b<c) - (b<c<3bc-1) - (b<3bc-1<3b(3bc-1)-c) - (3bc-1<\ldots) - \ldots$$ and repeat the patron. For example, The Markov triple $$(1686049, 981277621, 4963446454828093)$$ needs $\nu=3$ branch changes to arrive at the Fibonacci branch. These are given by $a_3=1 \mapsto a_2=5 \mapsto a_1=194 \mapsto a_0=1686049$. The corresponding MMP requires $19=6 \cdot 3+1$ flips to end in $\F_1 \rightsquigarrow \F_7$. Thus, by taking one mutation in the branch of $a_0=1686049$ , we obtain the Markov triple $$(1686049,4963446454828093,25105841795148372846050),$$ whose MMP needs $21=6\cdot3+3$ flips to finish in $\F_1 \rightsquigarrow \F_7$.

\end{example}

\appsection{Computations for the MMP on $(1=a<b<c)$} \label{app}

We now proceed to describe in detail the computations of the MMP for Subsection \ref{s6.3}, omitting the calculation of the self-intersection of the flipped curve and other redundant flip calculations. We utilize the basic properties of T-weights as outlined in \cite[Lemma 5.5]{Per22} and apply \cite[Prop 2.15]{Urz16b} to compute the k1A flips after Flip 1. For each step we list the extremal k1A/k2A neighborhood, the associated $\delta$ and the Mori recursion as described Section \ref{s5}. We recall that for any given Markov number we have the computer program \cite{programa} that runs explicitly the MMP.

\vspace{0.2cm}
As in Subsection \ref{s6.3}, we start with the branch 
{\scriptsize  $$(1<b<c)-(b<c<3bc-1)-(c<m_0<m_1)-(c<m_1<m_2)-\ldots-(c<m_{k}<m_{k+1})-\ldots$$}
Recall that in this case we set $m_{-1}=b$ and $m_{-2}=1$. Additionally, the equations $w_c=3b-c$ and $w_b=3w_c-b$ hold. For $k\geq 1$, the computations of the MMP are as follows:

\vspace{0.1cm}

\noindent \textbf{Step 1:} \\
We replace the corresponding k1A neighborhood with the k2A:\\ 
{\tiny$\left[{c^2 \choose cw_c+1}\right]-(1)_--\left[{ m_{k+1}\choose w_{m_{k+1}}}\right]$}\\
We obtain the data
\begin{itemize}
    \item $\delta=c(cw_{m_{k+1}}-m_{k+1}w_c)-m_{k+1}=3cm_k-m_{k+1}=m_{k-1}$.
    \item $n(1)=m_{k+1}$, $n(0)=c^2$, $n(-1)=m_{k-1}c^2-m_{k+1}<0$.
    \item $a(1)=w_{m_{k+1}}$, $a(0)=cw_c+1$, $a(-1)=m_{k-1}(cw_c+1)-w_{m_{k+1}}$.
\end{itemize}
Therefore, the flipped curve is given by: 
\vspace{-0.1cm}
{\tiny$$\left[{ m_{k+1}-c^2m_{k-1} \choose w_{m_{k+1}}-m_{k-1}(c w_c+1)}\right]-(1)_{+}-\left[{c^2 \choose c w_c+1}\right].$$}

\vspace{0.2cm}
\noindent \textbf{Step 2:} 
{\tiny$\left[{c^2 \choose cw_c+1}\right]-(1)_--\left[{m_k\choose w_{m_k}}\right]$}\\
We obtain the data
\begin{itemize}
    \item $\delta=c(cw_{m_k}-m_kw_c)-m_k=3cm_{k-1}-m_k=m_{k-2}$.
    \item $n(1)=m_k$, $n(0)=c^2$, $n(-1)=m_{k-2}c^2-m_k<0$.
    \item $a(1)=w_{m_k}$, $a(0)=cw_c+1$, $a(-1)=m_{k-2}(cw_c+1)-w_{m_k}$.
\end{itemize}
Therefore, the flipped curve is given by:  

\vspace{-0.1cm}
{\tiny$$\left[{ m_{k}- c^2m_{k-2} \choose w_{m_k}-m_{k-2}(c w_c+1)}\right]-(1)_{+}-\left[{c^2 \choose c w_c+1}\right].$$}
\vspace{0.2cm}

\noindent \textbf{Step 3:}
{\tiny$\left[{ m_{k+1}-c^2m_{k-1} \choose w_{m_{k+1}}-m_{k-1}(c w_c+1)}\right]-(1)_{-}-\left[{ m_{k}- c^2m_{k-2} \choose w_{m_k}-m_{k-2}(c w_c+1)}\right]$}\\
Initially we focus on the $k=0$ case, for which we obtain the data
\begin{itemize}
    \item $\delta=m_1w_{m_0}-m_0w_{m_1}+c(cw_{m_1}-m_1w_c)+bc(m_0w_c-cw_{m_0})+bm_0-m_1$\\ $=3c+3cm_0-m_1-3b^2c+bm_0=3c.$
    \item $n(0)=m_1-bc^2$, $n(1)=m_0-c^2$, $n(2)=b-(3c-b)c^2<0$.
    \item $a(0)=w_{m_1}-b(cw_c+1)$, $a(1)=w_{m_0}-(cw_c+1)$, $a(2)=w_b-(3c-b)(cw_c+1)$.
\end{itemize}

\vspace{0.1cm}
The induced recursions of the $k=0$ neighborhood are $n(1-k) = m_k-c^2 m_{k-2}$ and $a(1-k) = w_{m_k}-m_{k-2}(c w_c+1)$, given that $m_k = 3c m_{k-1}-m_{k-2}$ and $w_{m_k} = 3c w_{m_{k-1}}-w_{m_{k-2}}$. Thus, the collection of neighborhoods for $k\geq 0$ form a Mori train and consequently carry the same birational information. Noting that $m_0-c^2=b^2$ and $w_{m_0}-cw_c-1=bw_b-1$, we observe that the flipped curve for any $k\geq 0$ is given by: 
\vspace{-0.1cm}

{\tiny$$\left[{b^2 \choose bw_b-1}\right]-(1)_{+}-\left[{ (3c-b)c^2-b \choose (3c-b)(cw_c+1)-w_b}\right].$$}
\vspace{0.2cm}

\noindent \textbf{Step 4:} 
{\tiny$\left[{c \choose w_c}\right]-(1)_{-}-\left[{b^2 \choose bw_b-1}\right]$}\\
We obtain the data
\begin{itemize}
    \item $\delta=b(cw_b-bw_c)-c=3b-c=w_c$.
    \item $n(1)=b^2$, $n(0)=c$, $n(-1)=1$, $n(-2)=w_c-c<0$.
    \item $a(1)=bw_b-1$, $a(0)=w_c$, $a(-1)=0$, $a(-2)=-w_c$.
\end{itemize}
Therefore, the flipped curve is given by: {\scriptsize$\left[{c-w_c \choose w_c}\right]-(2)_{+}.$}
\vspace{0.2cm}

\noindent \textbf{Step 5:} 
{\tiny$ (1)_{-}-\left[{ (3c-b)c^2-b \choose (3c-b)(cw_c+1)-w_b}\right]$}\\
We obtain the data
\begin{itemize}
    \item $\delta=(3c-b)(cw_c+1)-w_b$.
    \item $n_1^\prime=(3c-b)(c(c-w_c)-1)-(b-w_b)$.
    \item $a_1^\prime=(3c-b)(cw_c+1)-w_b$.
\end{itemize}
Therefore, the flipped curve is given by: {\scriptsize$\left[{ (3c-b)(c(c-w_c)-1)-(b-w_b) \choose (3c-b)(cw_c+1)-w_b}\right]-(2)_+.$}
\vspace{0.2cm}

\noindent \textbf{Step 6:} 
{\tiny$ (1)_{-}-\left[{c^2 \choose c w_c+1}\right]$}\\
We obtain the data
\begin{itemize}
    \item $\delta=c w_c+1$.
    \item $n_1^\prime=c(c-w_c)-1$.
    \item $a_1^\prime=c w_c+1$.
\end{itemize}
Therefore, the flipped curve is given by: {\scriptsize$\left[{c(c-w_c)-1 \choose c w_c+1}\right]-(2)_+.$}
\vspace{0.2cm}

 \noindent \textbf{Step 7:} 
{\tiny$\left[{ (3c-b)(c(c-w_c)-1)-(b-w_b) \choose (3c-b)(cw_c+1)-w_b}\right]-(1)_{-}-\left[{c(c-w_c)-1 \choose c w_c+1}\right]$}\\
We obtain the data
\begin{itemize}
    \item $\delta=w_b(c(c-w_c)-1)-(cw_c+1)(b-w_b)=c(cw_b-bw_c)-b=3c-b$.
    \item $n(0)=(3c-b)(c(c-w_c)-1)-(b-w_b)$, $n(1)=c(c-w_c)-1$, $n(2)=b-w_b$, $n(3)=2+bw_b+cw_c-3cw_b=3+c^2+w_c(3b-8c)=-6$.
    \item $a(0)=(3c-b)(cw_c+1)-w_b$, $a(1)=c w_c+1$, $a(2)=w_b$, $a(3)=-2-c^2+w_c(8c-3b)=7$.
\end{itemize}

Since $-7\equiv 5 \pmod{6}$, the flipped curve is given by: {\tiny$$\left[{b-w_b \choose w_b}\right]-(3)_{+}-\left[{6\choose 5}\right],$$}
with the exception of the specific case where $b=2$, which is addressed in \ref{s6.3}.
\vspace{0.2cm}

\noindent \textbf{Step 8:} 
{\tiny$\left[{c-w_c \choose w_c}\right]-(1)_{-}-\left[{b-w_b \choose w_b}\right]$} 

\vspace{0.2cm}
We observe that this neighborhood lies in the Mori train governed by the antiflip family of the P-resolution over $\frac{1}{5}(1,1)$ with $n_0^\prime = n_1^\prime = 1$ and $c = 5$. Indeed, we set $a_0^\prime = 0$ and $a_1^\prime = 1$, which implies $\delta = 3$. The Mori train is defined by the recursive relations:
\begin{itemize}
    \item $n(i+1) = 3n(i) - n(i-1)$ with  $n(0) = 1$ and $n(-1) = -1$.
    \item $a(i+1) = 3a(i) - a(i-1)$
with $a(0) = 0$ and $a(-1) = -\delta = -3$.
\end{itemize}
Therefore, the flip is given by:
{\scriptsize$(5)_{+}.$}
\vspace{0.2cm}

\noindent \textbf{Step 9:} 
{\tiny$ \left[{6\choose 1}\right]-(1)_{-}$}\\
We obtain the data
\begin{itemize}
    \item $\delta=5$.
    \item $n_1^\prime=1$.
    \item $a_1^\prime=1$.
\end{itemize}
Therefore, the flipped curve is given by: {\scriptsize$(7)_{+}.$}
\vspace{0.5cm} 

Now, we proceed to exhibit the computations of the MMP associated to the other branch
{\scriptsize  $$(1<b<c)-(1<c<3c-b)-(c<m_0<m_1)-(c<m_1<m_2)-\ldots-(c<m_{k}<m_{k+1})-\ldots$$}

In this situation the equations $w_{m_0}=3c-2b$, $w_c=3b-c$ and $w_b=3w_c-b$ hold. For the MMP, we initially focus on the $k=0$ case.

\vspace{0.1cm}
\noindent \textbf{Step 1:}\\
We replace the corresponding k1A neighborhood with the k2A:\\ 
{\tiny$\left[{c^2 \choose c(c-w_c)+1}\right]-(1)_--\left[{ m_1\choose w_{m_1}}\right]$}\\
We obtain the data
\begin{itemize}
    \item $\delta=c(m_1w_c-c(m_1-w_{m_1}))-m_1=3cm_0-m_1=1$.
    \item $n(1)=m_1$, $n(0)=c^2$, $n(-1)=c^2-m_1<0$.
    \item $a(1)=w_{m_1}$, $a(0)=c(c-w_c)+1$, $a(-1)=c(c-w_c)+1-w_{m_1}$.
\end{itemize}
Therefore, the flipped curve is given by:
\vspace{-0.1cm}
{\tiny$$\left[{ m_{k+1}-c^2m_{k-1} \choose w_{m_{k+1}}-m_{k-1}(c w_c+1)}\right]-(1)_{+}-\left[{c^2 \choose c w_c+1}\right].$$}
\vspace{0.2cm}

\noindent \textbf{Step 2:} 
{\tiny$\left[{c^2 \choose c(c-w_c)+1}\right]-(1)_{-}-\left[{m_{0} \choose w_{m_0}}\right]$}\\
We obtain the data
\begin{itemize}
    \item $\delta=c(m_0w_c-c(m_0-w_{m_0}))-m_0=3c-m_0=b$.
    \item $n(0)=c^2$, $n(1)=m_0$, $n(2)=3bc-b^2-c^2=1$, $n(3)=b-m_0<0$.
    \item $n(0)=c(c-w_c)+1$, $a(1)=w_{m_0}$, $a(2)=b(3c-2b)-c(2c-3b)-1=1$, $a(3)=b-w_{m_0}$.
\end{itemize}
Therefore, the flipped curve is given by:
{\tiny$(2)_{+}-\left[{m_0-b \choose w_{m_0}-b }\right].$}
\vspace{0.2cm}

\noindent \textbf{Step 3:} 
{\tiny$\left[{ m_1-c^2 \choose w_{m_1}-(c(c-w_c)+1)}\right]-(1)_{-}$}\\
We obtain the data
\begin{itemize}
    \item $\delta=m_1-w_{m_1}+cw_c+1=m_0(m_0-w_{m_0})+1=bm_0+1.$
    \item $n_1^\prime=w_{m_1}-c(c-w_c)-1=m_0w_{m_0}-1$.
    \item $a_1^\prime=2(m_0w_{m_0}-1)-m_1+c^2=2(m_0w_{m_0}-1)-m_0^2$.
\end{itemize}
Therefore, the flipped curve is given by:
{\tiny$(2)_{+}-\left[{m_0w_{m_0}-1 \choose 2(m_0w_{m_0}-1)-m_0^2}\right].$}
\vspace{0.2cm}

\noindent \textbf{Step 4:} 
{\tiny$\left[{c \choose c-w_c}\right]-(1)_{-}$}\\
We obtain the data
\begin{itemize}
    \item $\delta=w_c.$
    \item $n_1^\prime=c-w_c$.
    \item $a_1^\prime=2(c-w_c)-c$.
\end{itemize}
Therefore, the flipped curve is given by:
{\tiny$(2)_{+}-\left[{c-w_c \choose c-2w_c}\right].$}
\vspace{0.2cm}

\noindent \textbf{Step 5:} 
{\tiny$\left[{c-w_c \choose c-2w_c}\right]-(1)_{-}-\left[{m_0w_{m_0}-1 \choose 2(m_0w_{m_0}-1)-m_0^2}\right]$}\\
We obtain the data
\begin{itemize}
    \item $\delta=c(m_0w_{m_0}-1)-m_0^2(c-w_c)=m_0(m_0w_c-(m_0-w_{m_0})c)-c=3m_0-c$.
    \item $n(1)=m_0w_{m_0}-1$, $n(0)=c-w_c$, $n(-1)=2+cw_c-m_0w_b=-6$.
    \item $a(1)=2(m_0w_{m_0}-1)-m_0^2$, $a(0)=c-2w_c$, $a(-1)=-5+bm_0-(3m_0-c)w_c=-13$.
\end{itemize}
Since $13\equiv 1 \pmod{6}$, the flipped curve is given by:
{\tiny$$\left[{6\choose 1}\right]-(3)_{+}-\left[{c-w_c \choose c-2w_c}\right].$$}

\noindent \textbf{Step 6:} {\tiny$\left[{c-w_c \choose c-2w_c}\right]-(1)_{-}-\left[{m_0-b \choose w_{m_0}-b }\right]$}

\vspace{0.1cm}
This neighborhood lies in the same Mori train of Step 8 in the previous branch. Therefore, the flipped curve is given by: {\scriptsize$(5)_{+}.$}
\vspace{0.2cm}

\noindent \textbf{Step 7:} {\tiny$\left[{6\choose 1}\right]-(1)_{-}$}\\
The flipped curve is given by: {\scriptsize$(7)_{+}.$}

\vspace{0.2cm}
Now we consider the $k\geq 1$ case. Indeed, the first three steps are very similar to those in the previous branch situation. We set $m_{-1}=1$.

\noindent \textbf{Step 1:} \\
We replace the corresponding k1A neighborhood to the k2A:\\ 
{\tiny$\left[{c^2 \choose c(c-w_c)+1}\right]-(1)_--\left[{ m_{k+1}\choose w_{m_{k+1}}}\right]$}\\
We obtain the data
\begin{itemize}
    \item $\delta=c(m_{k+1}w_c-c(m_{k+1}-w_{m_{k+1}}))-m_{k+1}=3cm_k-m_{k+1}=m_{k-1}$.
    \item $n(1)=m_{k+1}$, $n(0)=c^2$, $n(-1)=m_{k-1}c^2-m_{k+1}<0$.
    \item $a(1)=w_{m_{k+1}}$, $a(0)=c(c-w_c)+1$, $a(-1)=m_{k-1}(c(c-w_c)+1)-w_{m_{k+1}}$.
\end{itemize}
Therefore, the flipped curve is given by: 
\vspace{-0.1cm}
{\tiny$$\left[{ m_{k+1}-c^2m_{k-1} \choose w_{m_{k+1}}-m_{k-1}(c(c-w_c)+1)}\right]-(1)_{+}-\left[{c^2 \choose c(c-w_c)+1}\right].$$}

\vspace{0.2cm}
\noindent \textbf{Step 2:} 
{\tiny$\left[{c^2 \choose cw_c+1}\right]-(1)_--\left[{m_k\choose w_{m_k}}\right]$}\\
We obtain the data
\begin{itemize}
    \item $\delta=c(m_kw_c-c(m_k-w_{m_k}))-m_k=3cm_{k-1}-m_k=m_{k-2}$.
    \item $n(1)=m_k$, $n(0)=c^2$, $n(-1)=m_{k-2}c^2-m_k<0$.
    \item $a(1)=w_{m_k}$, $a(0)=c(c-w_c)+1$, $a(-1)=m_{k-2}(c(c-w_c)+1)-w_{m_k}$.
\end{itemize}
Therefore, the flipped curve is given by:  

\vspace{-0.1cm}
{\tiny$$\left[{ m_{k}- c^2m_{k-2} \choose w_{m_k}-m_{k-2}(c(c-w_c)+1)}\right]-(1)_{+}-\left[{c^2 \choose c(c-w_c)+1}\right].$$}
\vspace{0.2cm}

\noindent \textbf{Step 3:}
{\tiny$\left[{ m_{k+1}-c^2m_{k-1} \choose w_{m_{k+1}}-m_{k-1}(c(c-w_c)+1)}\right]-(1)_{-}-\left[{ m_{k}- c^2m_{k-2} \choose w_{m_k}-m_{k-2}(c(c- w_c)+1)}\right]$}

\vspace{0.1cm}
We repeat the technique of the Step 3 of the previous branch, but for the $k=1$ case. To minimize space, we skip the computation of $\delta$.
\begin{itemize}
    \item $\delta=3c.$
    \item $n(0)=m_2-m_0c^2$, $n(1)=m_1-c^2$, $n(2)=m_0-bc^2<0$.
    \item $a(0)=w_{m_2}-b(c(c-w_c)+1)$, $a(1)=w_{m_1}-(c(c-w_c)+1)$, $a(2)=w_0-b(c(c-w_c)+1)$.
\end{itemize}
Again, these neighborhoods determine the Mori train defined by the recursions: $n(2-k) = m_k-c^2 m_{k-2}$ and $a(2-k) = w_{m_k}-m_{k-2}(c w_c+1)$. Therefore, the flipped curve is given by:  
\vspace{-0.1cm}
{\tiny$$\left[{ m_1-c^2 \choose w_{m_1}-(c (c-w_c)+1)}\right]-(1)_{+}-\left[{ bc^2-m_0 \choose b(c(c-w_c)+1)-w_{m_0}}\right].$$}

\vspace{0.2cm}
\noindent \textbf{Step 4:} 
{\tiny$\left[{ bc^2-m_0 \choose b(c(c-w_c)+1)-w_{m_0}}\right]-(1)_{-}-\left[{c^2 \choose c(c-w_c)+1}\right]$}\\
We obtain the data
\begin{itemize}
    \item $\delta=c(cw_{m_0}-m_0(c-w_c))-m_0=3c-m_0=b$.
    \item $n(0)=bc^2-m_0$, $n(1)=c^2$, $n(2)=m_0$, $n(3)=1$, $n(4)=b-m_0<0$.
    \item $n(0)=b(c(c-w_c)+1)-w_{m_0}$, $a(1)=c(c-w_c)+1$, $a(2)=w_{m_0}$, $a(3)=1$, $a(4)=b-w_{m_0}$.
\end{itemize}
Therefore, the flipped curve is given by:
{\tiny$(2)_{+}-\left[{m_0-b \choose w_{m_0}-b }\right].$}
\vspace{0.2cm}

\bibliographystyle{plain}
\bibliography{math}

\end{document}